\theoremstyle{plain}
\newtheorem{theorem}{Theorem}[section]
\newtheorem{proposition}[theorem]{Proposition}
\newtheorem{lemma}[theorem]{Lemma}
\newtheorem{corollary}[theorem]{Corollary}
\theoremstyle{definition}
\newtheorem{assumption}[theorem]{Assumption}
\theoremstyle{remark}
\newtheorem{remark}[theorem]{Remark}
\newcommand{\range}{\mathrm{range}}
\newcommand{\E}[1]{\mathbb{E}\left[#1\right]}
\newcommand{\Ei}[2]{\mathbb{E}_{#1}\left[#2\right]}
\newcommand{\g}{\nabla}
\newcommand{\bg}{\mathrm{D}}
\newcommand{\bb}{\mathrm{G}}
\newcommand{\R}{\mathbb{R}}
\def\<#1,#2>{\langle #1,#2\rangle}
\newcommand{\norm}[1]{\|#1\|}
\newcommand{\sqn}[1]{\norm{#1}^2}
\newcommand{\Norm}[1]{\left\|#1\right\|}
\newcommand{\sqN}[1]{\Norm{#1}^2}
\newcommand{\cbraces}[1]{\left( #1 \right)}
\newcommand{\sbraces}[1]{\left[ #1 \right]}
\newcommand{\braces}[1]{\left\{ #1 \right\}}
\newcommand{\cH}{\mathcal{H}}
\newcommand{\cG}{\mathcal{G}}
\newcommand{\cV}{\mathcal{V}}
\newcommand{\cE}{\mathcal{E}}
\newcommand{\cL}{\mathcal{L}}
\newcommand{\cO}{\mathcal{O}}
\newcommand{\cD}{\mathcal{D}}
\newcommand{\mI}{\mathbf{I}}
\newcommand{\mM}{\mathbf{M}}
\newcommand{\mP}{\mathbf{P}}
\newcommand{\mV}{v}
\newcommand{\mW}{\mathbf{W}}
\newcommand{\mX}{x}
\newcommand{\mY}{y}
\newcommand{\eqdef}{\coloneqq}
\DeclareMathOperator*{\argmin}{arg\,min}
\newcommand{\eps}{\varepsilon}
\def\vone{{\bm{1}}}
\def\vv{v}
\def\vx{x}
\newcommand{\N}{\mathbb{N}}
\newcommand{\dotp}[2]{\left<#1, #2\right>}
\newcommand{\oL}{\overline{L}}
\def\eqref#1{(\ref{#1})}
\newcommand{\z}{\hat{z}}
\newcommand{\x}{\hat{x}}
\title{Decentralized Finite-Sum Optimization over Time-Varying Networks}
\author{
Dmitry Metelev \\ MIPT\thanks{Moscow Institute of Physics and Technology, Moscow, Russia}\\
\And
Savelii Chezhegov \\ MIPT \\
\And
Alexander Rogozin \\ MIPT, HSE University\thanks{the National Research University Higher School of Economics, Moscow, Russia} \\
\And
Aleksandr Beznosikov \\ MIPT \\
\And
Alexander Sholokhov \\ MIPT \\
\And Alexander Gasnikov \\ MIPT, University Innopolis\thanks{University Innopolis, Innopolis, Russia}, ISP RAS\thanks{Ivannikov Institute for System Programming of the Russian Academy of Sciences, Moscow, Russia}\\
\And
Dmitry Kovalev \\ Yandex\thanks{Yandex Research, Moscow, Russia}}
\begin{document}

\maketitle

\begin{abstract}
	We consider decentralized time-varying stochastic optimization problems where each of the functions held by the nodes has a finite sum structure. Such problems can be efficiently solved using variance reduction techniques. Our aim is to explore the lower complexity bounds (for communication and number of stochastic oracle calls) and find optimal algorithms. The paper studies strongly convex and nonconvex scenarios. To the best of our knowledge, variance reduced schemes and lower bounds for time-varying graphs have not been studied in the literature. For nonconvex objectives, we obtain lower bounds and develop an optimal method GT-PAGE. For strongly convex objectives, we propose the first decentralized time-varying variance-reduction method ADOM+VR and establish lower bound in this scenario, highlighting the open question of matching the algorithms complexity and lower bounds even in static network case.
\end{abstract}

\section{Introduction}
\subsection{Decentralized optimization}

The classic problem statement of distributed optimization is considering a sum-type problem
\begin{equation}\label{main_problem_non_vr}
	\min_{x\in\R^d}~ F(x) := \sum_{i=1}^m F_i(x), 
\end{equation}
where $F_i(x)$ is loss function of $i$-\textit{th} device. The hallmark of distributed optimization is that each device only has access to its own data. One standard approach to solve such a problem is centralized optimization, where devices are called \textit{clients} and there is a \textit{server} responsible for device communication. Other frequent case of distributed optimization is the decentralized paradigm, in which the key factor is that there is no server, and information exchange takes place over a \textit{communication network}, which is described as a graph. This approach has some advantages over centralized optimization due to the fault possibility and privacy issues.

Decentralized optimization has many applications, such as power system control \citep{ram2009distributed,gan2012optimal}, distributed statistical inference \citep{forero2010consensus,nedic2017fast}, vehicle coordination and control \citep{ren2008distributed} and distributed sensing \citep{rabbat2004distributed,bazerque2009distributed}. 
Moreover, this approach is also applicable to machine learning, in particular, to federated learning \citep{konecny2016federated,mcmahan2017communication}, as this automatically leads to a restriction on the presence of the server in the communication network. 

Depending on the structure of the communication graph, decentralized optimization studies can be divided into two types: \textit{static} and \textit{time-varying} cases. The static case means that the communication network is retained throughout the training process. In this path, studies have been conducted in which both primal \citep{kovalev2020optimal} and dual \citep{scaman2017optimal} optimal algorithms have been obtained. Nevertheless, in the case of a fixed communication network, there are other ways of averaging between nodes that are more popular in distributed optimization. One such procedure is AllReduce \citep{chan2007collective}, which allows decentralized behavior to be modeled through a centralized process. As a consequence, one can obtain that the gossip protocol can be replaced by an approach that ostensibly has a server.

On the other hand, for time-varying networks, where the topology is altered from one moment to another, the idea of switching to the “centralized” setting does not work -- changes in communication graph prevent the existence of averaging guarantees for procedures as AllReduce (at least, the synchrony of information transfer is disrupted). As a result, that makes the gossip protocol one of the few options for information exchange. Moreover, cases where the topology of a distributed model changes over time are more frequent than the static case. The instability of links practically happens due to malfunctions in communication, such as a loss of wireless connection between sensors or drones, or human behavior modeling. This path has already been covered quite extensively in previous papers \citep{zadeh1961time,nedic2017achieving,nedic2020distributed, maros2018panda, kovalev2021lower, kovalev2021adom, li2021accelerated, koloskova2020unified, huang2022optimal}, where both dual and primal approaches were considered, and lower bounds were obtained.
\subsection{Variance reduction}
As said earlier, decentralized optimization has applications in machine learning. Thus, as in ML minimization problems, it is natural to consider the function $f(x)$ as an empirical risk function, i.e.
\begin{align}
    \label{int: finite-sum}
    f(x) := \frac{1}{n}\sum\limits_{j=1}^n f_{j}(x),
\end{align}
where $f_{j}(x)$ is the empirical risk for a slice of data. Although the most classical approach for minimizing $f$ is Stochastic Gradient Descent \citep{robbins1951stochastic}, an advanced paradigm for that problem is \textit{variance reduction}. The essence of this technique is a “tricky” update of the step direction, which allows to gradually reduce the variance of the stochastic selection of the set of $\{f_{j}\}_{j=1}^n$ which is involved in the update. That technique was widely studied for both strongly-convex \citep{johnson2013accelerating,defazio2014saga,
 allen2016katyusha} and nonconvex \citep{reddi2016stochastic, allen2016variance,nguyen2017sarah, fang2018spider, li2021page} scenarios. 
\subsection{Decentralized optimization and Variance reduction} 
Since decentralized optimization is rather important for machine learning as well, it is quite natural that the loss function $F_i$ on each device is constructed from its own data, and hence has the form of \eqref{int: finite-sum}. This immediately leads to the problem statement we consider in the paper:
\begin{align}
    \label{main_problem}
    \min_{x\in\R^d}~ F(x) := \sum_{i=1}^m F_i(x),
\end{align}
where $F_i = \frac{1}{n}\sum\limits_{j=1}^n f_{ij}(x)$. Such a problem has already been investigated in the literature for both strongly convex \citep{hendrikx2021optimal, li2022variance} and nonconvex \citep{xin2021fast, xin2022fast, li2022destress, luo2022optimal} settings. Nevertheless, that studies were devoted only to the case of static networks, which indicates the theoretical gap in the case of time-varying networks for the problem \eqref{main_problem}. To close this issue, we formulate the contribution of our paper as follows: 

$\bullet$ \textbf{Strongly convex case.} For the strongly convex problem \eqref{main_problem} over time-varying networks, we propose a method \textsc{ADOM+VR} (Algorithm~\ref{scary:alg}). The method is based on the combination of \textsc{ADOM+} algorithm for non-stochastic decentralized optimization over time-varying networks \citep{kovalev2021lower} and loopless \textsc{Katyusha} approach for finite-sum problems \citep{kovalev2020don}.

$\bullet$ \textbf{Nonconvex case.} For nonconvex problem \eqref{main_problem} over time-varying graphs, we propose an algorithm \textsc{GT-PAGE} (Algorithm~\ref{alg:yup}). The main idea is to implement the \textsc{PAGE} gradient estimator \citep{li2021page} for finite-sum problem into the gradient tracking technique \citep{nedic2017achieving}.

$\bullet$ \textbf{Optimality.} We propose first lower bounds for decentralized optimization on time-varying networks, which show that the presented methods are not only state-of-the-art in terms of communication and oracle complexity, but also \textit{optimal} under some conditions.



\section{Related Work} 
Decentralized optimization \eqref{main_problem_non_vr} over static and time-varying networks has been actively developing in recent years. For the strongly convex case, dual-based methods and lower bounds for (non-stochastic) strongly convex optimization over static graphs were proposed \citep{scaman2017optimal}. Optimal primal methods were obtained in \citep{kovalev2020optimal}. For time-varying networks, non-accelerated primal \citep{nedic2017achieving} and dual \citep{maros2018panda} methods were proposed. After that, accelerated algorithms were given in \citep{kovalev2021adom} (\textsc{ADOM}) for dual oracle and -- \textsc{ADOM+} and \textsc{Acc-GT} -- in \citep{kovalev2021lower} and \citep{li2021accelerated} for primal oracle. These methods match the lower complexity bounds for time-varying graphs developed in \citep{kovalev2021lower}. For the nonconvex case, primal algorithms were provided: \textsc{GT-DSGD} \citep{xin2021improved}, \textsc{DSGT} \citep{zhang2019decentralized}, \textsc{D-PSGD} \citep{lian2017can}, \textsc{DeTAG} \citep{lu2021optimal}, \textsc{MG-DSGD} \citep{yuan2022revisiting} for the static case, and \textsc{DSGD} \citep{koloskova2020unified}, \textsc{DSGT} \citep{zhang2019decentralized}, \textsc{MC-DSGT} \citep{huang2022optimal} for time-varying networks. Moreover, for both scenarios lower bounds were also provided in \citep{yuan2022revisiting} and \citep{huang2022optimal}, respectively.

Under finite-sum problem \eqref{int: finite-sum}, classical variance reduction methods such as SAGA \citep{defazio2014saga} and SVRG \citep{johnson2013accelerating} allow to enhance the rates for stochastic optimization problems with finite-sum structure. Accelerated variance reduced schemes require adding a negative momentum, also referred to as Katyusha momentum \citep{allen2016katyusha}. Considering nonconvex problems, recent development was started with \citep{reddi2016stochastic} and \citep{allen2016variance}, where algorithms based on SVRG were proposed. More recently, other modifications of SVRG scheme with the same gradient complexity $\mathcal{O}\left(n + n^{2/3}/{\epsilon^2}\right)$ were proposed in \citep{li2018simple}, \citep{ge2019stabilized} and \citep{horvath2019nonconvex}. Moreover, optimal algorithms were presented, such as Spider~\citep{fang2018spider}, SNVRG~\citep{zhou2020stochastic}, methods based on SARAH~\citep{nguyen2017sarah} (e.g. SpiderBoost~\citep{wang2018spiderboost}, ProxSARAH~\citep{pham2020proxsarah}, Geom-SARAH~\citep{horvath2022adaptivity}) and PAGE~\citep{li2021page}, which have $\mathcal{O}\left(n + \sqrt{n}/{\epsilon^2}\right)$ gradient estimation complexity. Lower bounds were also presented \citep{fang2018spider, li2021page}.

In strongly-convex decentralized optimization over static graphs, optimal variance reduced methods ADFS \citep{hendrikx2019accelerated} and Acc-VR-EXTRA \citep{li2022variance} were proposed. The corresponding lower bounds were given in \citep{hendrikx2021optimal}. 
In the nonconvex case, the result of first application of variance reduction and gradient tracking to decentralized optimization for static graphs was the method  D-GET~\citep{sun2020improving}. Later, algorithms GT-SAGA~\citep{xin2021fast}, GT-HSGD~\citep{xin2021hybrid} and GT-SARAH~\citep{xin2022fast} were proposed, which improve the complexity of communication rounds and local computations comparing to D-GET. A relatively new result was achieved by the method DESTRESS~\citep{li2022destress}, which is optimal in terms of local computations, but ineffective in terms of number of communications in case of static graphs. This method was improved into DEAREST~\citep{luo2022optimal},  which is optimal. Nevertheless, the application of variance reduction has not been studied for the case of nonconvex decentralized optimization over time-varying graphs. In Table~\ref{tab:complexity_nonconvex} we present an overview of methods for which it is possible to explicitly write out complexities in terms of constants of smoothness and $\chi$. For an overview of other algorithms, see Table 1 in \citep{xin2022fast} and Table 1 in \citep{xin2021hybrid}. 
\begin{remark}
    It is necessary to clarify that optimality of DESTRESS and DEAREST is not clear in terms of dependence on smoothness constants. Indeed, mentioned constants $L_s, \hat{L}$ and $L$ are sensitive to $n$. In \cref{proof_lower} we show that ratios $\sqrt{n}L = \hat{L}$ and $nL = L_s$ can be achieved.
\end{remark}
\begin{table}[h]
	\label{table:complexity_str_convex}
	\centering
        \resizebox{0.75\columnwidth}{!}{
	\begin{tabular}{|c|c|c|}
		\hline
		\bf Algorithm & {\bf Comp.} & {\bf Comm.} \\
		\hline
		ADFS  \citep{hendrikx2021optimal}& $n + \sqrt{n\max_i \frac{\oL_i}{\mu_i}}$ & $\sqrt{\chi} \sqrt{\max_i \frac{L_i}{\mu_i}}$ $^{\text{\textcolor{blue}{(a)}}}$ \\
		\hline
		Acc-VR-EXTRA  \citep{li2020optimal}& $n + \sqrt{n\frac{\oL}{\mu}}$ & $\sqrt{\chi\frac{L}{\mu}}$  \\
		\hline
		ADOM+VR  (Alg.~\ref{scary:alg}, this paper) & $n + \sqrt{n\frac{\oL}{\mu}}$ & $\chi\sqrt{\frac{L}{\mu}}$ $^{\text{\textcolor{blue}{(b)}}}$\\
		\hline
		Lower bound (Th.~\ref{thm:lower_strong_conv}, this paper) & $n + \sqrt{n \max_i \frac{\oL_i}{\mu_i}}$ & $\chi \sqrt{\max_i \frac{L_i}{\mu_i}}$ $^{\text{\textcolor{blue}{(b)}}}$\\
		\hline
	\end{tabular}
    }
    \caption{Computational (the number of stochastic oracle calls per node) and communication complexities of decentralized methods for finite-sum \textbf{strongly convex} optimization. $O(\cdot)$ notation and $\log(1/\eps)$ factor are omitted. Notation:\\
    \textcolor{blue}{(a)} -- dual method; 
    \textcolor{blue}{(b)} -- time-varying network is considered;\\
    Here $L$ is taken from \cref{assum:smoothness_F}, $\oL_i$ from \cref{assum:smoothness_f} and $L_i$ from \cref{assum:kappa_lower}. For notation and more details, see Sections \ref{sec:notation_and_assumptions} and \ref{sec:lower_bounds}. 
    }
\end{table}
\begin{table}[h]
    \centering
    \resizebox{0.75\columnwidth}{!}{
	\begin{tabular}{|c|c|c|}
	\hline
	\textbf{Algorithm} & \textbf{Comp}. & \textbf{Comm}. \\ \hline
        {DESTRESS  \citep{li2022destress}} & $n + \frac{\sqrt{n}L_{s}\Delta}{\eps^2}$ & $\sqrt{\chi}\big(\sqrt{mn} + \frac{L_{s}\Delta}{\eps^2}\big)$ \\ \hline
        {DEAREST \citep{luo2022optimal}} & $n + \frac{\sqrt{n}\hat{L}\Delta}{\eps^2}$ & $\sqrt{\chi}\frac{\hat{L}\Delta}{\eps^2}$ \\ \hline
	{GT-PAGE (Alg.~\ref{alg:yup}, this paper)} & $n + \frac{\sqrt{n}\hat{L}\Delta}{\eps^2}$ & $\chi\frac{L\Delta}{\eps^2}$ $^{\text{\textcolor{blue}{(a)}}}$ \\ \hline
        {Lower bound (Th.~\ref{thm: lower-bound-sq-norm}, this paper)} & $n + \frac{\sqrt{n}\hat{L}\Delta}{\eps^2}$ & $\chi\frac{L\Delta}{\eps^2}$ $^{\text{\textcolor{blue}{(a)}}}$ \\ \hline
\end{tabular}
}
\caption{Computational (the number of stochastic oracle calls per node) and communication complexities of decentralized methods for finite-sum \textbf{nonconvex} optimization over time-varying graphs. $O(\cdot)$ notation is omitted. Notation:\\
    \textcolor{blue}{(a)} -- time-varying network is considered; \\
Here $L_{s} = \max_{i,j} L_{ij}$ from \cref{assum:smoothness_f}, $L$ from \cref{assum:smoothness_F} and $\hat{L}$ from \cref{assum:smoothness_F_average}. For notation, see \cref{sec:notation_and_assumptions}.} 
\label{tab:complexity_nonconvex}
\end{table}

\section{Notation and Assumptions}\label{sec:notation_and_assumptions}

Throughout this paper, we adopt the following notations: We denote by \(||\cdot||=||\cdot||_2\) the norm in \(L_2\) space. The Kronecker product of two matrices is denoted as \(A \otimes B\). We use \(\cD(X)\) to denote some distribution over a finite set \(X\). The sets of batch indices are denoted by \(S\), expressed as \(S = (\xi^1, \ldots, \xi^b)\), where \(\xi^j\) is a tuple of \(m\) elements, each corresponding to a node, specifically \(\xi^j = (\xi^j_1, \ldots, \xi^j_m)\), with \(\xi_i^j\) being the index of the local function on \(i\)-th node in \(j\)-th element of the batch. Also for each $i=1,\ldots,m$ define $S_i = (\xi_i^1, \ldots, \xi_i^b)$. Each node maintains its own copy of a variable corresponding to a specific variable in the algorithm. The variables in the algorithm are aggregations of the corresponding node variables:
\begin{equation*}
    x = (x_1, x_2, \ldots, x_m)\in \R^{d\times m}.
\end{equation*}
With a slight abuse of notation we denote $F(x) = \sum_{i=1}^m F_i(x_i)$ and $\g F(x) = \left(\g F_1(x_1), \ldots, \g F_m(x_m)\right).$
Linear operations and scalar products are performed component-wise in a decentralized way. Let us introduce an auxiliary subspace $\cL = \{x\in\R^{d\times m}|x_1=\ldots=x_m\}$, respectively $\cL^\perp = \{x\in\R^{d\times m}|x_1+\ldots+x_m=0\}$. We also let $x^* = \argmin_{x\in\R^d} F(x)$ or $x^* = \argmin_{x\in\R^{d\times m}} F(x)$, depending on the context.
 
Let us pass to assumptions on objective functions. We introduce different concepts of smoothness: Assumptions~\ref{assum:smoothness_F}, \ref{assum:smoothness_f} and \ref{assum:strong_convexity} are used in Algorithm~\ref{scary:alg}; Assumptions~\ref{assum:smoothness_F} and \ref{assum:smoothness_F_average} are for Algorithm~\ref{alg:yup}.
\begin{assumption}\label{assum:smoothness_F}
	For each $i = 1, \ldots, m$ function $F_i$ is $L$-smooth, i.e.
	\begin{align*}
        \norm{\nabla F_{i}(y) - \nabla F_{i}(x)} \leq L\norm{y - x}.
	\end{align*}
\end{assumption}
\begin{assumption}\label{assum:smoothness_f}
	For each $i = 1, \ldots, m$ and $j = 1, \ldots, n$ function $f_{ij}$ is convex and $L_{ij}$-smooth, i.e.
	\begin{align*}
        \norm{\nabla f_{ij}(y) - \nabla f_{ij}(x)} \leq L_{ij}\norm{y - x}.
	\end{align*}
    For each $i = 1, \ldots, m$ let us define
    \begin{equation*} 
        \oL_i = \frac{1}{n}\sum_{j=1}^n L_{ij},\text{  }
        \oL = \max_{i}\left\{\oL_i\right\}.
    \end{equation*}
\end{assumption}
\begin{assumption}\label{assum:smoothness_F_average}
	For each $i = 1, \ldots, m$ function $F_i$ is $\hat{L}$-average smooth, i.e.
	\begin{align*}
		\frac{1}{n}\sum\limits_{j=1}^n\norm{\nabla f_{ij}(y) - \nabla f_{ij}(x)}^2 \leq \hat{L}^2\norm{y - x}^2.
	\end{align*}
\end{assumption}
Let us briefly discuss Assumptions \ref{assum:smoothness_F}, \ref{assum:smoothness_f} and \ref{assum:smoothness_F_average}. \cref{assum:smoothness_F} is a basic one for the consideration of decentralized approaches; it means that $L$ is an uniform bound for effective constants of smoothness of each $F_i$. Nevertheless, assumptions involving $\{f_{ij}\}$ differ depending on the convex/nonconvex scenarios. Indeed, \cref{assum:smoothness_f} arises from the structure of proposed method for the strongly convex setup, where we avoid the assumption on uniform smoothness of each $f_{ij}, j = \overline{1, n}$ \citep{kovalev2020don} and propose a slightly tighter analysis since we engage the \textit{effective} constants of smoothness compared to the previous results. At the same time, \cref{assum:smoothness_F_average} is the classic one, especially for the nonconvex case, since it allows to measure the constant of smoothness effectively in stochastic approaches for the finite-sum problem \eqref{int: finite-sum}. Moreover, optimal results for the nonconvex setup already exist under \cref{assum:smoothness_F_average} \citep{fang2018spider, li2021page}, hence we decide not to deviate from it.
\begin{remark}
    Moreover, it can be shown that there are some ratios between $L$ and other measures of smoothness. Thus, for example,
    \begin{align*}
        L &\leq \oL \leq nL; \qquad
        L \leq \hat{L} \leq \sqrt{n}L.
    \end{align*}
    These correlations can play an important role, for example, in constructing of lower bounds. For more details, see \cref{sec:lower_bounds}.
\end{remark}
Also we introduce an assumption on strong convexity.
\begin{assumption}\label{assum:strong_convexity}
	For each $i = 1, \ldots, m$ function $F_i$ is $\mu$-strongly convex, i.e.
	\begin{align*}
		F_i(y) \geq F_i(x) + \<\g F_i(x), y - x> + \frac{\mu}{2}\sqn{y - x}_2.
	\end{align*}
\end{assumption}
Decentralized communication is represented by a sequence of graphs $\braces{\cG^k = (\cV, \cE^k)}_{k=0}^\infty$. With each graph, we associate a gossip matrix $\mW(k)$.
\begin{assumption}\label{assum:gossip_matrix_sequence}
For each $k = 0, 1, 2, \ldots$ it holds
\begin{enumerate}
	\item $[\mW(k)]_{i,j} \neq 0$  if and only if $(i,j) \in \cE^k$ or $i=j$,
	\item $\ker \mW(k) \supset  \left\{ (x_1,\ldots,x_m) \in \R^n : x_1 = \ldots = x_m \right\}$,
	\item $\range \mW(k) \subset \left\{(x_1,\ldots,x_m) \in\R^n : \sum_{i=1}^m x_i = 0\right\}$,
	\item There exists $\chi \geq 1$, such that 
	\begin{align}\label{eq:chi}
		&\sqn{\mW(k) x - x} \leq (1-\chi^{-1})\sqn{x} \\
		&\text{ for all } x  \in \left\{ (x_1,\ldots,x_m) \in \R^{d\times m} :\sum_{i =1}^m x_i = 0\right\}. \nonumber
	\end{align}	
\end{enumerate}
\end{assumption}
The main distinguishing property in \cref{assum:gossip_matrix_sequence} is the $4^{th}$ one, which asserts the connectivity of each communication network into a sequence $\{\cG^k\}_{k=0}^\infty$.
What is more, matrices $\mW(k)$ can be chosen as $\mW(k) = \mathbf{L}(\cG^k) / \lambda_{\max}(\mathbf{L}(\cG^k))$, where $\mathbf{L}(\cG^k)$ denotes a graph Laplacian matrix. Moreover, if the network is constant ($\cG^k \equiv \cG$), we have $\chi = \lambda_{\max}(\mathbf{L}(\cG))/\lambda_{\min}^+(\mathbf{L}(\cG))$, i.e. $\chi$ equals the graph condition number.




\section{Algorithms}\label{sec:optimal_algorithms}

In this section we propose two algorithms, \textsc{ADOM+VR} and \textsc{GT-PAGE} for the strongly convex and nonconvex cases, respectively, and give theoretical guarantees of the convergence. It is worth mentioning that considered complexities for decentralized optimization and variance reduction approaches are different. To be more precise, in decentralized techniques we measure the number of communications, while for variance reduction methods we estimate the number of oracle calls. Therefore, these two complexities should be measured for the comprehensive analysis. For more details, see \cref{corollary:adom+_conv} and \cref{cor:nonconvex_setup}.


\subsection{Strongly Convex Case}\label{subsec:optimal_alg_str_convex}

For the strongly convex case, we propose a method, which consists of ADOM+~\citep{kovalev2021lower} as a base decentralized method and Katyusha~\citep{allen2016katyusha,kovalev2020don} gradient estimator with negative momentum.

\textbf{ADOM+.}
 The main idea of ADOM+ is briefly described as follows. The given optimization problem can be written in decentralized form as
\begin{equation*}
\min_{x\in \cL} F(x).
\end{equation*}
This can be further reformulated in the following way, which is the basis for the ADOM+ method:
\begin{equation*}
\min_{x\in \R^{d\times m}}\max_{y\in\R^{d\times m}}\max_{z\in \cL^{\perp}} F(x)-\frac{\nu}{2}\sqn{x}-\<y, x>-\frac{1}{2\nu}\sqn{y+z}.
\end{equation*}
It is not difficult to show that in case $\nu<\mu$, this saddle point problem is strongly convex, which means that it has a single solution $(x^*, y^*, z^*)$ satisfying the optimality conditions:
\begin{align}
0 &= \g F(x^*)-\nu x^* - y^*,\label{opt:x}\\
0 &= \nu^{-1}(y^*+z^*)+x^*,\label{opt:y}\\
0 &\ni y^* + z^*. \label{opt:z}
\end{align}

The idea is described in more detail in \citep{kovalev2021lower}.

\textbf{Katyusha.} The method Katyusha \citep{allen2016katyusha} contains two ideas -- negative momentum and special gradient estimator. If the problem \eqref{int: finite-sum} is considered, then at step $k$, instead of the gradient $\nabla f(x^k)$ one uses an estimator
\begin{align}\label{eq:grad_estimator_katyusha}
	\nabla^k = \frac{1}{b} \sum_{i\in S} [\g f_i(x^k) - \g f_i(w^k)] + \nabla f(w^k),
\end{align}
where $S$ is a random batch of indices of size $b$, $x^k$ is the current iterate and $w^k$ is a reference point at which the full gradient is computed. However, we integrate a loopless version of Katyusha \citep{kovalev2020don} into our algorithm rather than the original one. 
\begin{algorithm}[h]
	\caption{\textsc{ADOM+VR}}
	\label{scary:alg}
	\begin{algorithmic}[1]
		\STATE {\bf input:} $x^0, y^0,m^0,\omega^0 \in (\R^d)^\cV$, $z^0 \in \cL^\perp$
		\STATE $x_f^0 = \omega^0 = x^0$, $y_f^0 = y^0$, $z_f^0 = z^0$
		\FOR{$k = 0,1,\ldots,N-1$}
		\STATE $x^k_g = \tau_1 x^k + \tau_0 \omega^k + (1 - \tau_1-\tau_0)x^k_f$\label{scary:line:x:1}
		\STATE{$S_i^k \sim \cD^b_i\left(\{1,2,\ldots,n\}\right), p_{ij}=\frac{L_{ij}}{n\oL_i}$}
		\STATE{$\left(\nabla^k\right)_i = \frac{1}{b}\sum_{j \in S^k_i} \frac{1}{np_{ij}}\big[
			\g f_{ij}(x^k_{g,i}) - \g f_{ij} (\omega^k_i) \big] + \g F_i(\omega^k_i)$}
		\STATE $x^{k+1} = x^k + \eta\alpha(x_g^k - x^{k+1}) - \eta\left[\nabla^k - \nu x_g^k - y^{k+1}\right] $\label{scary:line:x:2}
		\STATE $x_f^{k+1} = x_g^k + \tau_2 (x^{k+1} - x^k)$\label{scary:line:x:3}
		\STATE{$\omega_i^{k+1} = \begin{cases} x^k_{f, i}, &\text{with prob. } p_1 \\
				x^k_{g, i}, &\text{with prob. } p_2
				\\ \omega_i^k, &\text{with prob. } 1-p_1-p_2 \end{cases}$}\label{scary:line_omega}
		\STATE $y_g^k = \sigma_1 y^k + (1-\sigma_1)y_f^k$\label{scary:line:y:1}
		\STATE $y^{k+1} = y^k + \theta\beta (\nabla^k - \nu x_g^k - y^{k+1}) -\theta\left[\nu^{-1}(y_g^k + z_g^k) + x^{k+1}\right]$\label{scary:line:y:2}
		\STATE $y_f^{k+1} = y_g^k + \sigma_2 (y^{k+1} - y^k)$\label{scary:line:y:3}
		\STATE $z_g^k = \sigma_1 z^k + (1-\sigma_1)z_f^k$\label{scary:line:z:1}
		\STATE $z^{k+1} = z^k + \gamma \delta(z_g^k - z^k) - (\mW(k)\otimes \mI_d)\left[\gamma\nu^{-1}(y_g^k+z_g^k) + m^k\right]$\label{scary:line:z:2}
		\STATE $m^{k+1} = \gamma\nu^{-1}(y_g^k+z_g^k) + m^k - (\mW(k)\otimes \mI_d)\left[\gamma\nu^{-1}(y_g^k+z_g^k) + m^k\right]$\label{scary:line:m}
		\STATE $z_f^{k+1} = z_g^k - \zeta (\mW(k)\otimes \mI_d)(y_g^k + z_g^k)$\label{scary:line:z:3}
		\ENDFOR
		\STATE {\bfseries Output:} $x^N$
	\end{algorithmic}
\end{algorithm}

\begin{theorem}\label{thm:adom+_conv}
	Let Assumptions \ref{assum:smoothness_f}, \ref{assum:smoothness_F}, \ref{assum:strong_convexity}, \ref{assum:gossip_matrix_sequence} and $b \geq \oL/L$ hold. Then Algorithm~\ref{scary:alg} requires $N$ iterations to yield $x^N$ such that $\norm{x^N - x^*}^2\leq \varepsilon$, where
	\begin{equation*}
		N = \cO \left(\left(\frac{n}{b}+\left(\frac{\sqrt{n}}{b}+\frac{n\oL}{b^2 L}+\chi\right)\sqrt{\frac{L}{\mu}}\right)\log \frac{1}{\epsilon}\right).
	\end{equation*}
\end{theorem}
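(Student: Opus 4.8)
\emph{Proof idea.} The plan is to run the Lyapunov-function analysis of the deterministic ADOM+ method \cite{kovalev2021lower} with the inexact, variance-reduced gradient $\nabla^k$ in place of $\g F(x_g^k)$, and to enlarge the ADOM+ potential by a variance-reduction term that tracks the staleness of the snapshot $\omega^k$, in the spirit of the loopless-Katyusha analysis \cite{kovalev2020don}. Let $(x^*,y^*,z^*)$ be the saddle point characterised by \eqref{opt:x}--\eqref{opt:z} (whose $x$-component is the consensus minimiser), let $m^*$ be the associated consensus fixed point, and define
\begin{align*}
	\Psi^k &= A_x\sqn{x^k - x^*} + A_{xf}\sqn{x_f^k - x^*} + A_y\sqn{y^k - y^*} + A_{yf}\sqn{y_f^k - y^*} \\
	&\quad + A_z\sqn{z^k - z^*} + A_{zf}\sqn{z_f^k - z^*} + A_m\sqn{m^k - m^*} + A_\omega\, R^k ,
\end{align*}
where the first seven terms carry positive coefficients $A_\bullet$ to be chosen (the $z$- and $m$-contributions taken in the weighted seminorms used in the ADOM+ analysis), and $R^k = F(\omega^k) - F(x^*) - \<\g F(x^*),\omega^k - x^*>$ is a Bregman-type snapshot term. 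The aim is a one-step contraction $\Ek{\Psi^{k+1}} \le (1-\rho)\Psi^k$; unrolling then gives $\E{\Psi^N}\le(1-\rho)^N\Psi^0$ and, since $\E{\sqn{x^N-x^*}}\le A_x^{-1}\E{\Psi^N}$ with $\Psi^0 = \cO(1)$ times the initial error, $\E{\sqn{x^N-x^*}}\le\eps$ as soon as $N = \cO(\rho^{-1}\log(1/\eps))$.

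The execution has four steps. \emph{(1) Estimator.} Show $\Ek{\nabla^k} = \g F(x_g^k)$ and the variance bound $\Ek{\sqn{\nabla^k - \g F(x_g^k)}} \le \tfrac{2\oL}{b}\big(F(\omega^k) - F(x_g^k) - \<\g F(x_g^k),\omega^k - x_g^k>\big)$, which follows from the importance weights $p_{ij}=L_{ij}/(n\oL_i)$ together with convexity and $L_{ij}$-smoothness of each $f_{ij}$ (Assumption~\ref{assum:smoothness_f}) via $\sqn{\g f_{ij}(u)-\g f_{ij}(v)}\le 2L_{ij}\big(f_{ij}(u)-f_{ij}(v)-\<\g f_{ij}(v),u-v>\big)$ and summation of per-sample variances, the $1/b$ coming from minibatch averaging. \emph{(2) Base inequalities with noise.} Re-run the ADOM+ per-iteration estimates for the primal $x$-block (lines~\ref{scary:line:x:1}--\ref{scary:line:x:3}), the dual $y$-block (line~\ref{scary:line:y:2}) and the time-varying consensus $(z,m)$-block (lines~\ref{scary:line:z:2}--\ref{scary:line:z:3}, the graph entering only through \eqref{eq:chi}), but now carrying the extra term $\Ek{\sqn{\nabla^k-\g F(x_g^k)}}$, which enters the $x$- and $y$-updates with coefficients of order $\eta$ and $\theta$; in exchange one gains the usual negative Bregman gaps and negative squared distances. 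Assumptions~\ref{assum:smoothness_F} and \ref{assum:strong_convexity} are used to pass between the two via $\tfrac\mu2\sqn{u-x^*}\le F(u)-F(x^*)-\<\g F(x^*),u-x^*>\le\tfrac L2\sqn{u-x^*}$.

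\emph{(3) Snapshot recursion.} From line~\ref{scary:line_omega}, $\Ek{R^{k+1}}\le(1-p_1-p_2)R^k + p_1 D_{x_f^k}+p_2 D_{x_g^k}$ with $D_u = F(u)-F(x^*)-\<\g F(x^*),u-x^*>$; taking $A_\omega\asymp\oL/\big(b(p_1+p_2)\big)$ lets the residual noise from Step (2) be absorbed into the negative terms of Step (2) and the $-\rho R^k$ contraction. \emph{(4) Parameters.} Fix $\nu\asymp\mu$; the refresh probability $p_1+p_2\asymp b/n$ with $p_1\asymp p_2$ (so $\omega$ is refreshed on average every $\cO(n/b)$ iterations); the acceleration parameters $(\tau_0,\tau_1,\tau_2,\eta,\alpha,\sigma_1,\sigma_2,\theta,\beta)$ as in ADOM+ up to constants, retuned for the stochastic noise; and the consensus parameters $(\gamma,\delta,\zeta)$ so the $(z,m)$-block contracts at rate $1-\cO(\chi^{-1})$. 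Feasibility of the noise absorption is exactly where $b\ge\oL/L$ is used, and it is what produces the term $\tfrac{n\oL}{b^2L}\sqrt{L/\mu}$. The overall rate $\rho$ is the minimum of the per-block rates, giving
\begin{equation*}
	\rho^{-1} \;=\; \cO\!\left(\frac nb + \Big(\frac{\sqrt n}{b} + \frac{n\oL}{b^2L} + \chi\Big)\sqrt{\frac L\mu}\right),
\end{equation*}
i.e. the bracketed factor in the claimed $N$.

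The main obstacle is Steps (2)--(4) together. The ADOM+ potential couples the three primal--dual blocks through $y_g^k+z_g^k$ and through the momentum $m^k$, and replacing the exact gradient by $\nabla^k$ perturbs both the $x$- and the $y$-update, so the noise $\tfrac{\oL}{b}\cdot(\text{Bregman gap at }\omega^k)$ appears twice and must be dominated by the limited negative curvature that survives after the acceleration inequalities are applied. Threading the loopless probability $p_1+p_2$ between its two competing requirements --- large enough to refresh $\omega$ every $\cO(n/b)$ iterations, small enough that the noise budget $A_\omega$ stays affordable --- and checking that the batch condition $b\ge\oL/L$ (``noise smoothness'' $\oL/b$ not exceeding ``signal smoothness'' $L$) is precisely what keeps the resulting system of inequalities solvable, is the crux; the remaining work --- the ADOM+ block inequalities, the Bregman-to-distance conversions, and bounding $\Psi^0$ --- is routine but lengthy.
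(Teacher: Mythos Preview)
Your proposal is correct and follows essentially the same approach as the paper: the paper's proof also augments the ADOM+ Lyapunov potential with a Bregman-type snapshot term in $\omega^k$, bounds the estimator variance exactly as in your Step (1), carries the extra noise through the $x$- and $y$-block inequalities, and closes the argument with the snapshot recursion and a parameter-tuning step that produces the same four-term rate. The only structural difference is cosmetic: the paper uses Bregman divergences $\bb_F(x_f^k,x^*)$ and $\bb_F(\omega^k,x^*)$ rather than squared distances for the $x_f$- and snapshot-terms, and couples $y_f,z_f$ through a single $\sqn{y_f^k+z_f^k-(y^*+z^*)}$ term rather than tracking them separately, but these are equivalent up to constants under Assumptions~\ref{assum:smoothness_F} and~\ref{assum:strong_convexity}.
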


\begin{corollary}\label{corollary:adom+_conv}
    In the setting of Theorem~\ref{thm:adom+_conv}, with \(b\sim\max\left\{\sqrt{n\oL/L}, n\sqrt{\mu/L}\right\}\) and the number of communications per iteration $\sim\chi$, the algorithm requires
    \begin{equation*}
        \cO\left(n + \sqrt{\frac{n\oL}{\mu}}\right)\log\frac{1}{\epsilon}
    \end{equation*} oracle calls per node and
    \begin{equation*}
        \cO\left(\chi\sqrt{\frac{L}{\mu}}\log\frac{1}{\epsilon}\right)
    \end{equation*} communication iterations to reach \(\norm{x^N - x^*}^2 \leq \epsilon\).
\end{corollary}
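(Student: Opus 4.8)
The plan is to obtain \cref{corollary:adom+_conv} directly from the iteration bound of \cref{thm:adom+_conv} by three reductions: (i) invoke multi-stage consensus to kill the $\chi$ inside the iteration count, (ii) convert iterations into oracle calls using the expected per-iteration cost, and (iii) substitute the prescribed batch size and do the case analysis on the two regimes of the $\max$. First I would check admissibility of the batch size: since $L\le\oL\le nL$, we have $\sqrt{n\oL/L}\ge\sqrt{\oL^2/L^2}=\oL/L$, so $b\sim\max\{\sqrt{n\oL/L},\,n\sqrt{\mu/L}\}$ satisfies $b\ge\oL/L$, the hypothesis of \cref{thm:adom+_conv}. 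Then, running the algorithm with the multi-stage gossip operator $\mW(k;T)$, $T=\lceil\chi\rceil$, from \cref{sec:optimal_algorithms} reduces the gossip condition number to $O(1)$, so $\chi$ is replaced by $O(1)$ in \cref{thm:adom+_conv} at the price of $\lceil\chi\rceil\sim\chi$ communications per iteration; hence
\begin{equation*}
N=\cO\!\left(\left(\frac nb+\left(\frac{\sqrt n}{b}+\frac{n\oL}{b^2L}+1\right)\sqrt{\frac L\mu}\right)\log\frac1\epsilon\right),
\end{equation*}
and the number of communication rounds is $\cO(\chi N)$.

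Next I would account for the oracle cost. At each iteration the estimator $(\nabla^k)_i$ uses a mini-batch of $b$ stochastic gradients, and with probability $p_1+p_2=O(b/n)$ (as set in the analysis of \cref{thm:adom+_conv}) the reference point is refreshed in line~\ref{scary:line_omega}, costing a full local gradient of $n$ samples; thus the expected number of stochastic oracle calls per node per iteration is $O(b)$, and the total is $O(bN)$. Multiplying the bound for $N$ by $b$ gives a total of $\cO\big(\big(n+(\sqrt n+\tfrac{n\oL}{bL}+b)\sqrt{L/\mu}\big)\log\tfrac1\epsilon\big)$. Now substitute $b\sim\max\{\sqrt{n\oL/L},\,n\sqrt{\mu/L}\}$: one has $b\sqrt{L/\mu}=\max\{\sqrt{n\oL/\mu},\,n\}\le n+\sqrt{n\oL/\mu}$; $\sqrt n\sqrt{L/\mu}\le\sqrt{n\oL/\mu}$ because $L\le\oL$; and $\tfrac{n\oL}{bL}\sqrt{L/\mu}\le\sqrt{n\oL/\mu}$ because $b\ge\sqrt{n\oL/L}$. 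This collapses the oracle complexity to $\cO\big((n+\sqrt{n\oL/\mu})\log\tfrac1\epsilon\big)$, matching the claim.

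Finally, for the communication count I would show $N=\cO(\sqrt{L/\mu}\,\log(1/\epsilon))$ term by term: $\tfrac nb\le\sqrt{L/\mu}$ since $b\ge n\sqrt{\mu/L}$; $\tfrac{\sqrt n}{b}\le1$ since $b\ge\sqrt n$; and $\tfrac{n\oL}{b^2L}\le1$ since $b\ge\sqrt{n\oL/L}$. Hence $\cO(\chi N)=\cO(\chi\sqrt{L/\mu}\,\log(1/\epsilon))$. The argument is essentially bookkeeping; the only place requiring care — and the main obstacle, minor as it is — is keeping the two regimes of the $\max$ straight simultaneously in the oracle and the communication estimates, and correctly charging the loopless full-gradient recomputation to the expected per-iteration cost so that it contributes only the additive $n$ (not a multiplicative factor) to the oracle complexity.
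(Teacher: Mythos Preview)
Your proposal is correct and follows essentially the same approach as the paper's proof: reduce $\chi$ to $O(1)$ via multi-stage consensus, show the resulting iteration count is $\cO(\sqrt{L/\mu}\log(1/\epsilon))$, then multiply by $\chi$ for communications and by $b$ for oracle calls, with the prescribed $b$ collapsing everything to the stated bounds. Your write-up is in fact more thorough than the paper's: you explicitly verify the admissibility condition $b\ge\oL/L$ (using $\oL\le nL$), and you correctly charge the loopless full-gradient refresh at rate $p_1+p_2=b/n$ so that its expected cost is $O(b)$ per iteration---the paper simply writes $Nb$ without justification.
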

Proofs of \cref{thm:adom+_conv} and \cref{corollary:adom+_conv} can be found in Appendix~\ref{appendix:A}.

\subsection{Nonconvex Case}\label{subsec:optimal_alg_nonconvex}
For the nonconvex setup, we propose a method based on a combination of gradient tracking and PAGE gradient estimator \citep{li2021page}. The main idea of this approach consists of two parts.

\textbf{Gradient Tracking.} Gradient tracking scheme can be written as in \citep{nedic2017achieving}:
\begin{align*}
    x^{k+1} &= \mW^kx^k - \eta y^k;\\
    y^{k+1} &= \mW^ky^k + \nabla F(x^{k+1}) - \nabla F(x^{k}).
\end{align*}
Such an algorithm leads to $y_i^k$ being an approximation of the average gradient from all devices in the network at each iteration.

\textbf{PAGE.} The key meaning of PAGE is as follows. Calculating the full gradient can be expensive, but finite-sum construction allows to count the batched gradient, which is clearly lower in computational cost. Moreover, PAGE update does not have any loops (as, for example, in SVRG~\citep{johnson2013accelerating}) and can be computed recursively as follows:
\begin{align}\label{eq:grad_estimator_page}
	\nabla^{k+1} = \frac{1}{b} \sum_{i\in S} \sbraces{\nabla f_i(x^{k+1}) - \nabla f_i(x^k)} + \nabla^k,
\end{align}
where $S$ denotes a random set of indices of size $b$. Note that unlike estimator \eqref{eq:grad_estimator_katyusha} for strongly convex case, PAGE estimator \eqref{eq:grad_estimator_page} stores the gradient from previous iteration, not only the gradient snapshot.\\ 
\begin{algorithm}[h]
	\caption{\textsc{GT-PAGE}}
	\label{alg:yup}
	\begin{algorithmic}[1]
		\STATE {\bfseries Input:} Initial point $ \mX^0= (\vone_m \otimes \mI_d)\vx_0$, $\mY^0 = \nabla F(\mX^0)$, $\mV^0 = \frac{1}{m} (\vone_m\vone^\top_m \otimes \mI_d) \mY^0$, step size $\eta$, minibatch size $b$. \\
		\FOR{$k=0, 1, \ldots, N - 1$}
		\STATE $\mX^{k+1} = ((\mI_m - \mW(k))\otimes \mI_d)\mX^k - \eta \mV^k$
        \STATE $S_i^k \sim \cD_i^b(\{1, 2, \ldots, n\})$, $p_{ij} = \frac{1}{n}$ 
		\STATE $\big(\nabla^k\big)_i = \mY^k_i + \frac{1}{b}\sum\limits_{j \in S^k_i} \big(\nabla f_{ij}(x_i^{k+1}) - \nabla f_{ij}(x_i^{k})\big)$
		\STATE $\mY^{k+1} = 
		\begin{cases}
			\nabla F(\mX^{k+1}), &\text{with prob. } p,\\
			\nabla^k, &\text{with prob. } 1-p
		\end{cases}$
		\STATE $\mV^{k+1} = ((\mI_m - \mW(k))\otimes \mI_d)\mV^{k} + \mY^{k+1} - \mY^k$
		\ENDFOR
        \STATE {\bfseries Output:} {$x$ chosen uniformly from $\{x^k\}_{k=0}^{N-1}$}
	\end{algorithmic}
\end{algorithm}

\begin{theorem}
    \label{thm:nonconvex-setup}
    Let Assumptions \ref{assum:smoothness_F}, \ref{assum:smoothness_F_average} and \ref{assum:gossip_matrix_sequence} hold. Then, Algorithm \ref{alg:yup} requires $N$ iterations to yield $\hat{x}^N$, which is randomly taken from $\{\bar\vx^k\}_{k=0}^{N-1}$ such that $\E{\norm{\nabla F(\hat{x}^N)}^2} \leq \epsilon^2$, where
    \begin{align*}
        N =\mathcal{O}\left(\frac{\chi^3 L\Delta\left(1 + \sqrt{\frac{(1-p)\hat{L}^2}{bpL^2}}\right)}{ \epsilon^2}\right),
    \end{align*}
    where $\Delta = F(x_0) - F^*$.
\end{theorem}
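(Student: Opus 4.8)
The plan is to run a Lyapunov / descent-lemma argument on the averaged iterates, in the spirit of the standard analysis of gradient tracking combined with the PAGE analysis of \cite{li2021page}. First I would set up notation: write $\bar\vx^k = \tfrac1m\sum_i x_i^k$ for the node-average iterate, $\bar\vv^k = \tfrac1m\sum_i v_i^k$, and $\bar\vy^k=\tfrac1m\sum_i y_i^k$. Because $\mW(k)$ annihilates the consensus space and maps into the orthogonal complement, one checks that the averaging step preserves $\bar\vv^{k+1}=\bar\vy^{k+1}$ (so the average of the tracking variable equals the average of the gradient estimator $\mathcal Y$), and that the average iterate performs a plain PAGE-type step $\bar\vx^{k+1}=\bar\vx^k-\eta\bar\vv^k$. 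Then I would apply $L$-smoothness of $F$ (Assumption~\ref{assum:smoothness_F}, which lifts to $F$ being $L$-smooth on the average) to obtain the one-step descent inequality
\begin{align*}
\E{F(\bar\vx^{k+1})} \le \E{F(\bar\vx^k)} - \tfrac{\eta}{2}\E{\norm{\nabla F(\bar\vx^k)}^2} - \left(\tfrac{\eta}{2}-\tfrac{L\eta^2}{2}\right)\E{\norm{\bar\vv^k}^2} + \tfrac{\eta}{2}\E{\norm{\bar\vv^k - \nabla F(\bar\vx^k)}^2}.
\end{align*}

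The heart of the argument is to control the two error terms: the \emph{gradient estimation error} $\E{\norm{\bar\vv^k-\overline{\nabla F}(\bar\vx^k)}^2}$, where $\overline{\nabla F}(x)=\tfrac1m\sum_i\nabla F_i(x)$, and the \emph{consensus error} $\E{\norm{\mX^k - \vone_m\otimes\bar\vx^k}^2}$ (and similarly for $\mV^k$). For the estimation error I would reproduce the PAGE recursion: conditioning on iteration $k$, the minibatch estimator is unbiased and, using $\hat L$-average smoothness (Assumption~\ref{assum:smoothness_F_average}), its variance contracts with rate $1-p$ plus a term of order $\tfrac{(1-p)\hat L^2}{b}\norm{\bar\vx^{k+1}-\bar\vx^k}^2 = \tfrac{(1-p)\hat L^2\eta^2}{b}\norm{\bar\vv^k}^2$; this is exactly where the factor $1+\sqrt{(1-p)\hat L^2/(bpL^2)}$ in the rate will come from, after optimizing $\eta$ against this recursion as in \cite{li2021page}. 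For the consensus errors I would use the contraction $\norm{(\mI_m-\mW(k))x - \bar x}\le\sqrt{1-\chi^{-1}}\,\norm{x-\bar x}$ implied by Assumption~\ref{assum:gossip_matrix_sequence}(4), but since a single gossip step only contracts by $\sqrt{1-\chi^{-1}}$ rather than by a constant, a naive linear-system argument loses powers of $\chi$; this is the source of the $\chi^3$ factor. Concretely I would bound the consensus error of $\mX$ by the consensus error of $\mV$ times $O(\chi)$, the consensus error of $\mV$ by the drift $\norm{\mathcal Y^{k+1}-\mathcal Y^k}$ times $O(\chi)$, and $\norm{\mathcal Y^{k+1}-\mathcal Y^k}$ by (gradient variation) $+$ (estimation error), giving one more $O(\chi)$; assembling these into a joint Lyapunov function
\begin{align*}
\Phi^k = \E{F(\bar\vx^k)} - F^* + c_1\eta\,\E{\norm{\bar\vv^k-\overline{\nabla F}(\bar\vx^k)}^2} + c_2\,\E{\norm{\mX^k-\vone\otimes\bar\vx^k}^2} + c_3\,\E{\norm{\mV^k-\vone\otimes\bar\vv^k}^2}
\end{align*}
with suitably chosen constants $c_1,c_2,c_3$ depending polynomially on $\chi$, and a step size $\eta = \Theta\!\big(\tfrac{1}{\chi^3 L(1+\sqrt{(1-p)\hat L^2/(bpL^2)})}\big)$, I would show $\Phi^{k+1}\le\Phi^k - \tfrac{\eta}{4}\E{\norm{\nabla F(\bar\vx^k)}^2}$.

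Summing this telescoping inequality from $k=0$ to $N-1$ and dividing by $N$ bounds the average of $\E{\norm{\nabla F(\bar\vx^k)}^2}$ by $\tfrac{4\Phi^0}{\eta N}$, and since $\Phi^0 = \Delta$ (the initialization makes all error terms vanish: $\mV^0$ is the exact average of $\mathcal Y^0=\nabla F(\mX^0)$ and $\mX^0$ is already consensual), choosing the output $\hat x^N$ uniformly from $\{\bar\vx^k\}$ and requiring the bound to be at most $\eps^2$ yields $N = \mathcal O\!\big(\Delta/(\eta\eps^2)\big) = \mathcal O\!\big(\chi^3 L\Delta(1+\sqrt{(1-p)\hat L^2/(bpL^2)})/\eps^2\big)$, as claimed. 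The main obstacle I anticipate is the bookkeeping of the coupled consensus recursions: getting the $O(\chi)$ loss at each of the three links tight (rather than, say, $O(\chi^2)$) and verifying that the constants $c_1,c_2,c_3$ can indeed be chosen consistently so that every cross term is absorbed — this is routine in principle but delicate, and is the step where time-varying $\mW(k)$ (precluding Chebyshev acceleration) genuinely costs extra factors of $\chi$ compared to the static-graph analyses of DESTRESS/DEAREST.
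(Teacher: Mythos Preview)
Your proposal follows the same overall architecture as the paper's proof: descent lemma from $L$-smoothness, a PAGE-type recursion for the estimator error, contraction recursions for the two consensus errors, a combined Lyapunov function, and a telescoping sum. The source of each factor in the rate (the $\chi^3$ from three coupled $O(\chi)$ losses, the $1+\sqrt{(1-p)\hat L^2/(bpL^2)}$ from the PAGE step-size optimization, and $\Phi^0=\Delta$ from the consensual initialization) is correctly identified.

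There is one technical point where your sketch diverges from what actually works. You propose to track $\E{\norm{\bar\vv^k-\overline{\nabla F}(\bar\vx^k)}^2}$ in the Lyapunov function and write the PAGE recursion at the averaged level with drift term $\tfrac{(1-p)\hat L^2\eta^2}{b}\norm{\bar\vv^k}^2$. Neither of these is clean: the averaged error does not admit its own contraction recursion (you can only upper bound it by the stacked per-node error $\Delta^k:=\E\norm{\nabla F(\mX^k)-\mathcal Y^k}^2$ plus an $L^2$-times-consensus term), and the PAGE variance term is driven by the \emph{stacked} displacement $\norm{\mX^{k+1}-\mX^k}^2$, which is not $\eta^2 m\norm{\bar\vv^k}^2$ because the per-node update $x_i^{k+1}-x_i^k$ contains the mixing correction $(\mM(k)-\mI)\mX^k$ and the local $v_i^k$, not the average $\bar\vv^k$. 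The paper handles this by putting $\Delta^k$ (not the averaged error) into the Lyapunov function, deriving the recursion $\Delta^{k+1}\le(1-p)\Delta^k+\tfrac{(1-p)\hat L^2}{b}\norm{\mX^{k+1}-\mX^k}^2$, and then bounding $\norm{\mX^{k+1}-\mX^k}^2\lesssim\Omega_1^k+\eta^2\Omega_2^k+\eta^2 m\norm{\bar\vv^k}^2$, so that the consensus errors feed back into the PAGE recursion. Once you make this switch, your plan goes through exactly as in the paper; the remaining work is the (tedious but mechanical) solution of the resulting system of four inequalities for the step size $\eta$ and the Lyapunov coefficients.
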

\cref{thm:nonconvex-setup} says that the number of iterations required to reach $\varepsilon$-accuracy is proportional to $\chi^3$. Nevertheless, the \textit{multi-stage consensus} technique can be applied, since it is a universal way to divide oracle and communication complexities of a decentralized optimization method. Instead of performing one synchronized communication, let us perform several iterations in a row. Following \citep{kovalev2021lower}, we introduce
\begin{align*}
	\mW(k; T) = \mI_m - \Pi_{q=kT}^{(k+1)T - 1} (\mI_m - \mW(q)).
\end{align*}
It can be shown that if we take $T = \lceil\chi\rceil$, then condition number of $\mW(k; T)$ reduces to $O(1)$. To see that, note that for all $x\in\cL^\bot$ it holds
\begin{align*}
	\norm{\mW(k; T)x - x}^2
	&\leq (1 - \chi^{-1})^T \norm{x}^2 \\
	&\leq \exp(-T\chi^{-1}) \norm{x}^2\leq e^{-1} \norm{x}^2.
\end{align*}
In other words, by using multi-stage consensus we reduce $\chi$ to $O(1)$ in iteration complexity by paying a $\lceil\chi\rceil$ times more communications per iteration. 
\begin{remark}
For static networks, Chebyshev acceleration replaces multi-stage consensus \citep{scaman2017optimal}. Term $\chi$ in complexity is reduced to $O(1)$ at the cost of performing $\lceil\sqrt\chi\rceil$ communications per iteration. (Static) gossip matrix $\mW$ is replaced by a Chebyshev polynomial $P(\mW)$.
\end{remark}
\begin{corollary}
	\label{cor:nonconvex_setup}
	In the setting of Theorem~\ref{thm:nonconvex-setup}, let $b = \frac{\sqrt{n}\hat{L}}{L},~ p = \frac{b}{n + b}$ and number of communications per iteration $\chi$. Then Algorithm~\ref{alg:yup} with multi-stage consensus requires
	\begin{align*}
		\mathcal{O}\cbraces{n+ \frac{\sqrt{n}\hat{L}\Delta}{\eps^2}} \quad \text{ and } \quad \mathcal{O}\cbraces{\frac{\chi L\Delta}{\eps^2}}
	\end{align*}
	oracle calls per node and
	communications, respectively, to reach accuracy $\eps$, i.e. $\E{\norm{\nabla F(\hat{x}^N)}^2} \leq \epsilon^2$.
\end{corollary}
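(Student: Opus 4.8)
The plan is to read off the corollary from \cref{thm:nonconvex-setup} in two moves: first substitute the prescribed $b$ and $p$ to collapse the bracketed factor to a constant, and then apply the multi-stage consensus construction described above to remove the $\chi^{3}$ dependence, at the price of $\lceil\chi\rceil$ gossip steps per iteration.

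First I would simplify the factor $1+\sqrt{(1-p)\hat L^{2}/(bpL^{2})}$ appearing in \cref{thm:nonconvex-setup}. With $p=\frac{b}{n+b}$ one has $1-p=\frac{n}{n+b}$ and hence $\frac{1-p}{p}=\frac{n}{b}$, so $\sqrt{\frac{(1-p)\hat L^{2}}{bpL^{2}}}=\frac{\hat L}{L}\cdot\frac{\sqrt n}{b}$. Since $\hat L\ge L$ (by Jensen applied to $F_i=\frac1n\sum_j f_{ij}$: $\norm{\nabla F_i(y)-\nabla F_i(x)}^{2}\le\frac1n\sum_j\norm{\nabla f_{ij}(y)-\nabla f_{ij}(x)}^{2}\le\hat L^{2}\norm{y-x}^{2}$), the choice $b=\frac{\sqrt n\hat L}{L}\ge\sqrt n\ge1$ is admissible (rounding to an integer costs only a constant), and plugging it in makes this term exactly $1$. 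Thus $1+\sqrt{\cdots}\le 2=\mathcal{O}(1)$ and \cref{thm:nonconvex-setup} reduces to $N=\mathcal{O}\!\left(\chi^{3}L\Delta/\eps^{2}\right)$ iterations.

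Next I would run \cref{alg:yup} with $\mW(k)$ replaced by $\mW(k;T)$ for $T=\lceil\chi\rceil$. As established above, $\norm{\mW(k;T)x-x}^{2}\le e^{-1}\norm{x}^{2}$ on $\cL^{\perp}$, and one checks that $\mW(k;T)$ still obeys \cref{assum:gossip_matrix_sequence} with a universal constant $\chi'=\tfrac{1}{1-e^{-1}}=\mathcal{O}(1)$ (the kernel/range conditions follow because $\cL$ lies in every $\ker\mW(q)$ and $\cL^{\perp}$ is invariant under every $\mW(q)$; cf.\ \cite{kovalev2021lower}). Hence, after this substitution, the algorithm needs $N=\mathcal{O}(L\Delta/\eps^{2})$ outer iterations, each performing $\lceil\chi\rceil$ gossip rounds, so the communication count is $N\cdot\lceil\chi\rceil=\mathcal{O}(\chi L\Delta/\eps^{2})$. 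For the oracle complexity I would count per outer iteration: line~5 always costs $2b$ stochastic gradient evaluations and line~6 costs an extra $n$ with probability $p$, for an expected $2b+pn$ per iteration; since $pn=\frac{bn}{n+b}\le b$ this is $\mathcal{O}(b)$. Multiplying by $N$ gives $\mathcal{O}(bL\Delta/\eps^{2})=\mathcal{O}(\sqrt n\hat L\Delta/\eps^{2})$, and adding the $n$ evaluations for the initialization $\mY^{0}=\nabla F(\mX^{0})$ yields $\mathcal{O}\!\left(n+\sqrt n\hat L\Delta/\eps^{2}\right)$ oracle calls per node.

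The argument is essentially bookkeeping, so there is no single hard step; the two points that actually need care are verifying that the multi-stage gossip matrices $\mW(k;T)$ satisfy \cref{assum:gossip_matrix_sequence} with a constant $\chi'$ independent of the network (so that all three powers of $\chi$ in \cref{thm:nonconvex-setup} are eliminated simultaneously and only one reappears through the $\lceil\chi\rceil$ communications per iteration), and recognizing that the stated oracle count is an expectation, since the number of times line~6 triggers over $N$ iterations is random — a high-probability version would follow from a standard concentration bound on a sum of Bernoulli$(p)$ variables.
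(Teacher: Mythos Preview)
Your proposal is correct and follows the same approach as the paper: substitute the prescribed $b$ and $p$ to reduce the bracketed factor to a constant, invoke multi-stage consensus to turn $\chi^{3}$ into $\mathcal{O}(1)$ at the cost of $\lceil\chi\rceil$ communications per outer iteration, and then count $\mathcal{O}(b)$ expected oracle calls per iteration plus the $n$ from initialization. Your write-up is in fact slightly more careful than the paper's (you multiply the per-iteration oracle cost by the outer iteration count $N$ rather than by $N_{\text{comm}}$, and you flag that the oracle bound is in expectation), so nothing needs to change.
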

Proofs of \cref{thm:nonconvex-setup} and \cref{cor:nonconvex_setup} can be found in \cref{upper} and \cref{upper-separate} respectively.
\begin{remark}
\label{upper-static}
    It should be clarified that in the case of time-static graphs Chebyshev acceleration allows us to go from $\chi$ to $\sqrt{\chi}$ in the bound on the number of communications.
\end{remark}
\section{Lower Bounds}\label{sec:lower_bounds}
In this section, we present lower bounds for the strongly convex case in terms of \citep{hendrikx2021optimal} and for the nonconvex case. It is important to note that the setup for the strongly convex case in which lower bounds for stochastic problems are considered is different from the class of problems analyzed in this article, which will be discussed in more detail later.

The high-level concept underlying lower bounds in decentralized optimization involves creating a decentralized counterexample problem, where information exchange between two vertex clusters is slow. More specifically, the vertices in the counterexample are divided into three types: the first type can potentially ``transfer'' the gradient from even positions to the next, introducing a new dimension, the second type can do so from odd positions, the third type does nothing. We take a ``bad'' function for the corresponding optimization problem and divide it by the corresponding node types in such a way that different clusters contain components of the "bad" function that can approach the solution only after ``communicating'' with nodes from another cluster.

It is also worth noting that the lower bounds in non-stochastic setting have a different interpretation than in stochastic one. Namely, in classical decentralized optimization, the lower bounds on the number of computational iterations are given in terms of the number of iterations when the network performs local computations without exchanging information. However, in stochastic setting, the lower bounds is interpreted as the maximum number of stochastic oracle calls per node, which forces almost the entire ``bad'' function to be put into a single node. This is exactly the problem that underlies the current divergence of upper and lower bounds in the strongly convex case.

\subsection{First-order Decentralized Algorithms}

Following \citep{kovalev2021adom} and \citep{hendrikx2021optimal}, let us formalize the concept of a decentralized optimization algorithm. The procedure will consist of two types of iterations: communicational iterations, in which nodes cannot access the oracle, but only exchange information with neighbors, and computational iterations, in which nodes do not communicate with each other, but only perform local computations in their memory. Let time be discrete, each iteration $k$ is either communicational or computational. For any vertex $i$, denote by $\mathcal{H}_i(k)$ the local memory at $k$th iteration. Then the following inclusions hold:

1. For all $i = 1, \ldots, m$, if nodes perform a local computation at step $k$, local information is updated as
\begin{align*}
\mathcal{H}_i(k + 1) \subseteq \text{span}\bigg(\bigcup_{j=1}^n\{x, \nabla f_{ij}(x), \nabla f_{ij}^*(x): x \in \mathcal{H}_i(k)\}\bigg).
\end{align*}

2. For all $i = 1, \ldots, m$, if nodes perform a communicational iteration at time step \( k \), local information is updated as

$$ \mathcal{H}_i(k + 1) \subseteq \text{span}\left( \bigcup_{j \in \mathcal{N}^k_i} \mathcal{H}_j(k) \cup \mathcal{H}_i(k) \right),$$

where $\mathcal{N}^k_i$ is neighbors of node $i$ at $k$-th step.

\subsection{Strongly Convex Case}

In the strongly convex case, we formulate the lower bounds under slightly different assumptions. We let each function $F_i$ have its own smoothness and strong convexity parameters.
\begin{assumption}\label{assum:strong_convexity_lower}
	For each $i = 1, \ldots, m$ function $F_i$ is $\mu_i$-strongly convex and $L_i$-smooth.
\end{assumption}

\begin{assumption}\label{assum:kappa_lower}
    For all $i = 1, \ldots, m$, we have $\kappa_b \geq L_i/\mu_i$ and $\kappa_s \geq \frac{1}{n}\sum_{j=1}^n L_{ij}/\mu_i$.
\end{assumption}

In this case, we allow functions on nodes to have different constants of strong convexity, preserving the constraints on condition numbers. This plays a role in lower bounds, because in the counterexample problem the strong convexity constants on the nodes can differ by a factor of $m$.

\begin{theorem}\label{thm:lower_strong_conv}
For any $\chi>24$, for any $\kappa_b > 0$, there exists a constant $\kappa_s > 0$, a time-varying network $\{\cG^k\}_{k=1}^{\infty}$ on $m$ nodes, the corresponding sequence of gossip matrices $\{\mW(k)\}_{k=1}^{\infty}$ satisfying Assumption~\ref{assum:gossip_matrix_sequence}, and functions $\{f_{ij}\}$, such that the problem~\eqref{main_problem} satisfies Assumptions~\ref{assum:smoothness_f}, \ref{assum:strong_convexity_lower}, \ref{assum:kappa_lower} and for any first-order decentralized algorithm holds
\begin{equation*}
    \frac{1}{nm}\sum_{i=1}^m\sum_{j=1}^n\frac{\sqn{x_{ij} - x^*}}{\sqn{x^0_{ij} - x^*}} \geq \max\left\{T_1, T_2\right\},
\end{equation*}
where
\begin{align*}
    T_1 &= \left(1 - \frac{2}{\sqrt{\frac{2}{3}\kappa_b+\frac{1}{3}} + 1}\right)^{2 + 16N_c/(\chi-24)},
    \\
    T_2 &= \left(1 - \frac{2n}{\sqrt{n}\sqrt{\frac{2}{3}\kappa_s+n/3} + n}\right)^{4N_s/n},
\end{align*}
 $N_c$ is the number of communication iterations, $N_s$ is the maximum number of stochastic oracle calls on any node, and $x_{ij}\in \cH_i(k)$, $k$ is the number of the last time step.
\end{theorem}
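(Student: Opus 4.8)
The plan is to build a single hard instance that simultaneously encodes two obstructions: a \emph{communication} obstruction in the spirit of \cite{kovalev2021lower} (which gives the $\chi\sqrt{\kappa_b}$-type lower bound $T_1$), and a \emph{finite-sum/stochastic} obstruction in the spirit of \cite{fang2018spider,hendrikx2021optimal} (which gives the $n+\sqrt{n\kappa_s}$-type lower bound $T_2$). Taking the maximum of the two rates is then free, since any algorithm must respect both constraints on the \emph{same} instance. I would first fix a time-varying graph: partition the $m$ nodes into the three classes described in the overview (``even-transfer'', ``odd-transfer'', ``inert''), and let $\cG^k$ alternate so that a fresh coordinate can only propagate from one cluster to the other once every $\Theta(\chi)$ communication rounds; the multi-stage-consensus calculation already in the paper (that $T=\lceil\chi\rceil$ steps are needed to contract $\cL^\perp$ by a constant) is exactly what makes this work, and one checks Assumption~\ref{assum:gossip_matrix_sequence} holds with the prescribed $\chi$ by scaling graph Laplacians.

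Next I would place on the nodes a variant of Nesterov's ``chain'' function, $\phi(x)=\tfrac{L-\mu}{8}\bigl(x^{(1)2}+\sum_{\ell}(x^{(\ell)}-x^{(\ell+1)})^2 - 2x^{(1)}\bigr)+\tfrac{\mu}{2}\|x\|^2$, split across the two active clusters so that odd coordinates live on one cluster and even coordinates on the other; by Assumption~\ref{assum:strong_convexity_lower} I am allowed to give the clusters different $\mu_i$ (differing by a factor $m$), which is the flexibility needed to make the split have the right individual smoothness and strong-convexity constants while keeping $\kappa_b$ fixed. A first-order decentralized algorithm, starting from $0$, can have nonzero $(\ell+1)$-st coordinate in any node's memory only after roughly $\ell$ ``cross-cluster'' communications; since each such crossing costs $\Theta(\chi)$ rounds, after $N_c$ communication iterations the achievable coordinate horizon is $O(N_c/\chi)$, and the standard residual estimate $\|x-x^*\|^2/\|x^0-x^*\|^2 \ge (1-2/(\sqrt{\kappa_b}+1))^{2\ell}$ for the $\ell$-truncated chain gives $T_1$ after averaging over $i,j$ (the $24$ and the $\tfrac23\kappa_b+\tfrac13$ are bookkeeping artifacts of the finite $m$ and of needing $\chi>24$ so $\chi-24>0$).

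For the finite-sum part, inside each active cluster I would further distribute the chain function across the $n$ local samples $f_{ij}$ — this is the SPIDER/PAGE-style construction: write a length-$n$ chain whose $j$-th link is visible only to sample $j$, so that seeing the $(j+1)$-st coordinate requires a stochastic oracle call on sample $j$. Because a batch of size $b$ advances the horizon by $O(b)$ samples but costs $b$ oracle calls, after $N_s$ oracle calls on the busiest node the reachable horizon is $O(N_s)$ but with the usual variance-reduction speedup it is really $O(N_s/\sqrt{\cdot})$ in the relevant regime, yielding the $4N_s/n$ exponent and the $\sqrt{\tfrac23\kappa_s + n/3}$ factor in $T_2$; the ``almost the entire bad function in one node'' remark in the text is exactly the reason $N_s$ is the \emph{max} over nodes rather than a sum. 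The last step is to exhibit the constant $\kappa_s$: it is forced by the split, i.e. once the per-sample smoothness $L_{ij}$ from Assumption~\ref{assum:smoothness_f} and the cluster strong-convexity $\mu_i$ are chosen, $\kappa_s$ is whatever $\tfrac1n\sum_j L_{ij}/\mu_i$ comes out to, and one checks Assumption~\ref{assum:kappa_lower}.

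The main obstacle I anticipate is making the \emph{two} constructions coexist on one instance without the dimensions interfering — i.e., arranging that the communication-chain coordinates and the finite-sum-chain coordinates are orthogonal blocks, so that progress in one cannot be ``borrowed'' to make progress in the other, while still having every $f_{ij}$ satisfy the convexity and $L_{ij}$-smoothness of Assumption~\ref{assum:smoothness_f} and every $F_i$ the strong convexity of Assumption~\ref{assum:strong_convexity_lower}. The clean way is a tensor/direct-sum construction: the variable is $x\in\R^{d_1}\oplus\R^{d_2}$, the communication-hard part acts on the first block and the sampling-hard part on the second, the objective is the sum, and the two analyses run independently; then $\max\{T_1,T_2\}$ drops out because whichever block the algorithm has neglected gives the bound. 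Verifying that the time-varying gossip-matrix sequence genuinely realizes $\chi$ in Assumption~\ref{assum:gossip_matrix_sequence} (and not merely up to constants) is the other fiddly point, but this is the same computation as in \cite{kovalev2021lower} and I would cite it.
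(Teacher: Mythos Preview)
Your high-level plan---Nesterov chain split across two distinguished node groups on a time-varying graph, plus a finite-sum obstruction---is right in spirit, but the paper's construction differs from yours in one essential way, and your mechanism for $T_2$ has a gap.

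You propose a direct sum $\R^{d_1}\oplus\R^{d_2}$ so that a communication-hard block and a sampling-hard block coexist without interference. The paper does not do this. It builds a \emph{single} instance: the variable lives in $\ell_2^n$ (i.e.\ $n$ copies of $\ell_2$), and the local functions are $f_{ij}(x)=g_i(x_j)$, where $g_i$ is the usual split chain (odd links on a distinguished node $v_l$, even links on $v_r$, a small quadratic elsewhere). This is $n$ \emph{parallel copies} of the same communication-hard chain, and sample $j$ touches only copy $j$.

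The analysis tracks the progress $k_j$ in each copy separately. The communication obstruction bounds each $k_j$ individually by $1+N_c/(m-3)$, since a transfer between $v_l$ and $v_r$ can happen at most once per $\Theta(m)=\Theta(\chi)$ rounds. The oracle obstruction bounds the \emph{sum} $\sum_j k_j\le 2N_s$, since each unit of progress in copy $j$ costs at least one call to $f_{v_l,j}$. The two bounds are then combined by one application of Jensen to the convex map $t\mapsto q^{2t}$:
\[
\frac{1}{n}\sum_{j=1}^n q^{2k_j}\;\ge\; q^{\frac{2}{n}\sum_j k_j}\;\ge\; q^{4N_s/n},
\]
which is exactly the exponent in $T_2$. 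There is no separate ``variance-reduction speedup'' reasoning and no intra-sample chain whose $j$-th link is visible only to sample $j$; your proposed mechanism for getting $4N_s/n$ via an $O(N_s/\sqrt{\cdot})$ horizon is not what happens here. The $\sqrt{n\kappa_s}$ in the corollary emerges afterward from the relation $\kappa_s=n\kappa_b$ that this parallel-copies construction forces, not from a speedup argument inside the lower bound.

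On the graph: the paper uses the specific two-stars-with-middle-vertex family $T_{a,b}$ from \cite{metelev2024decentralized} with ``hop left/right'' dynamics, for which $\chi\le 8m$; this is the origin of the constant $24$ (one needs $m-3\ge(\chi-24)/8>0$). Your three-cluster sketch is in the right spirit but you would need to actually exhibit the gossip-matrix sequence and verify Assumption~\ref{assum:gossip_matrix_sequence} with the stated $\chi$, which that reference does.
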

\begin{proof}
    The proof may be found in Appendix~\ref{appendix:B}.
\end{proof}
\begin{corollary}
For any $\chi > 0$ and any $\kappa_b > 0$, there exists a decentralized problem satisfying Assumptions~\ref{assum:smoothness_f}, \ref{assum:gossip_matrix_sequence}, \ref{assum:strong_convexity_lower}, and \ref{assum:kappa_lower}, such that for any first-order decentralized algorithm for each node to reach an $\epsilon$-solution of problem~\eqref{main_problem}, a minimum of $N_c$ communication iterations and $N_s$ stochastic oracle calls on some node are required, where
\begin{align*}
    N_s &= \Omega\left(\left(n+\sqrt{n\kappa_s}\right)\log\left(\frac{1}{\eps}\right)\right),\\
    N_c &= \Omega\left(\chi\sqrt{\kappa_b}\log\left(\frac{1}{\eps}\right)\right).
\end{align*}
\end{corollary}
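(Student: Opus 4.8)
The plan is to derive the two bounds by directly inverting the scalar estimates $T_1$ and $T_2$ supplied by \cref{thm:lower_strong_conv}. For a node to reach an $\epsilon$-solution, the averaged relative error $\frac{1}{nm}\sum_{i=1}^m\sum_{j=1}^n \sqn{x_{ij}-x^*}/\sqn{x^0_{ij}-x^*}$ must be at most $\epsilon$; since \cref{thm:lower_strong_conv} lower bounds this quantity by $\max\{T_1,T_2\}$, any algorithm producing an $\epsilon$-solution must satisfy \emph{both} $T_1\le\epsilon$ and $T_2\le\epsilon$. It therefore suffices to turn each of these two inequalities into a lower bound, the first on $N_c$ and the second on $N_s$.

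\emph{Communications.} First I would set $a=\sqrt{\tfrac23\kappa_b+\tfrac13}$, so the base of $T_1$ is $1-\tfrac{2}{a+1}=\tfrac{a-1}{a+1}\in(0,1)$ (using $\kappa_b>1$). Taking logarithms, $T_1\le\epsilon$ becomes $\bigl(2+\tfrac{16N_c}{\chi-24}\bigr)\ln\tfrac{a+1}{a-1}\ge\ln(1/\epsilon)$, and applying $\ln(1+t)\le t$ with $t=\tfrac{2}{a-1}$ gives $\ln\tfrac{a+1}{a-1}\le\tfrac{2}{a-1}$. Hence
\begin{equation*}
	N_c\;\ge\;\frac{\chi-24}{16}\left(\frac{(a-1)\ln(1/\epsilon)}{2}-2\right).
\end{equation*}
Since $a-1=\Theta(\sqrt{\kappa_b})$ whenever $\kappa_b$ exceeds an absolute constant and $\chi-24=\Theta(\chi)$ whenever $\chi\ge 48$, this is $\Omega(\chi\sqrt{\kappa_b}\log(1/\epsilon))$; the remaining small ranges of $\chi$ and $\kappa_b$ are covered by the classical non-stochastic communication lower bound of \cite{kovalev2021lower} and the classical single-agent strongly convex lower bound.

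\emph{Oracle calls.} The same computation with $c=\sqrt{n}\sqrt{\tfrac23\kappa_s+\tfrac n3}=\sqrt{\tfrac23 n\kappa_s+\tfrac{n^2}{3}}$, for which the base of $T_2$ equals $1-\tfrac{2n}{c+n}=\tfrac{c-n}{c+n}$, turns $T_2\le\epsilon$ into $\tfrac{4N_s}{n}\ln\tfrac{c+n}{c-n}\ge\ln(1/\epsilon)$, and then, via $\ln\tfrac{c+n}{c-n}\le\tfrac{2n}{c-n}$, into
\begin{equation*}
	N_s\;\ge\;\frac{(c-n)\ln(1/\epsilon)}{8}.
\end{equation*}
Because the corollary is an existence statement and \cref{thm:lower_strong_conv} only asserts the existence of a suitable $\kappa_s$, I would take that $\kappa_s$ to be at least a fixed multiple of $n$; then $c\ge 2n$, so $c-n\ge c/2=\Omega(\sqrt{n\kappa_s})$, and since $\sqrt{n\kappa_s}=\Omega(n)$ in this regime we obtain $N_s=\Omega((n+\sqrt{n\kappa_s})\log(1/\epsilon))$. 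The additive $n$ in full generality is anyway forced by the trivial fact that a node must query each of its $n$ local summands at least once.

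\emph{Main obstacle.} There is essentially no heavy algebra; the only delicate point is the regime bookkeeping — one must check that the bases $\tfrac{a-1}{a+1}$ and $\tfrac{c-n}{c+n}$ lie strictly inside $(0,1)$ and are bounded away from $1$, i.e. that $\kappa_b$ and $n\kappa_s$ are large enough, so that the elementary logarithm bound is applied with a genuinely small argument and the inversion of $T_1,T_2$ does not degenerate. Since condition numbers satisfy $\kappa_b\ge 1$ and (because $\oL_i\ge\mu_i$) $\kappa_s\ge 1$, the bases always lie in $(0,1]$; the existential freedom in choosing $\kappa_s$ secures the dominance of the $\sqrt{n\kappa_s}$ term, and the few boundary regimes collapse to the standard $\Omega(n)$ and $\Omega(\log(1/\epsilon))$ lower bounds.
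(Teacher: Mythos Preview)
Your approach is correct and is precisely the natural derivation: the paper does not spell out a proof of this corollary, and what you do---invert the two explicit contraction factors $T_1,T_2$ from \cref{thm:lower_strong_conv} via the elementary bound $\ln(1+t)\le t$, then handle the boundary regimes $\chi\le 24$ and small $\kappa_b$ by falling back on the known non-stochastic bounds---is exactly how one reads the corollary off the theorem. Your observation that the construction in the theorem yields $\kappa_s=n\kappa_b$, so that $\kappa_s$ is automatically a multiple of $n$ once $\kappa_b$ is bounded away from $1$, is the right way to secure $c-n=\Omega(\sqrt{n\kappa_s})$.
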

As we can see, the obtained lower bound has different setting than the class of problems on which the work of Algorithm~\ref{scary:alg} is analysed, the same problem is present in \citep{li2020optimal} and \citep{kovalev2022optimal_1}. This difficulty appears to arise in a decentralised setup, so the question of how to make the lower bound correct, how to interpret it and what would be the optimal primal algorithm in the case of static and time-varying network remains open. 
\subsection{Nonconvex Case}
In the nonconvex case, we use the same assumptions that for Algorithm~\ref{alg:yup}.
\begin{theorem}
\label{thm: lower-bound-sq-norm}
    For any $L > 0$, $m \geq 3$, there exists a set $\{F_i\}_{i=1}^n$ which satisfy \cref{assum:smoothness_F} and \cref{assum:smoothness_F_average}, and a sequence of matrices $\{\mW(k)\}_{k=0}^{\infty}$ which satisfy \cref{assum:gossip_matrix_sequence}, such that for any output $\hat{x}^N$ of any first-order decentralized algorithm after $N$ communications and $K$ local computations we get:
    \begin{align*}
        \E {\norm{\nabla F(\hat{x}^N)}^2} &= \Omega\left(\frac{\chi L\Delta}{ N}\right),\\
        \E {\norm{\nabla F(\hat{x}^N)}^2} &= \Omega\left(\frac{\sqrt{n}\Delta\hat{L}}{K}\right).
    \end{align*}
\end{theorem}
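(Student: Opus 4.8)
The plan is to build a hard instance that embeds two independent obstructions on top of the classical nonconvex ``chain'' (zero-chain) function $h$ of Nesterov / Carmon--Duchi--Hinder--Sidford, whose gradient at any point supported on the first $t$ coordinates is supported on the first $t+1$ coordinates, so that a first-order method can ``activate'' the coordinates of $h$ only one at a time. The first obstruction is a slow information-propagation structure in the time-varying network, forcing the communication bound $\E{\norm{\nabla F(\hat{x}^N)}^2}=\Omega(\chi L\Delta/N)$; the second is a finite-sum sampling obstruction inside each $F_i=\frac1n\sum_j f_{ij}$, forcing the computation bound $\E{\norm{\nabla F(\hat{x}^N)}^2}=\Omega(\sqrt{n}\,\hat{L}\Delta/K)$. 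A single instance realizing both bounds is obtained by placing the two hard functions in orthogonal coordinate blocks (a direct sum), so that any first-order decentralized algorithm must pay both costs simultaneously.

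For the communication bound I would follow the three-types-of-nodes template outlined in the text, adapting the time-varying lower bound of \cite{kovalev2021lower} to the nonconvex chain. Partition the $m\ge 3$ nodes into an ``even'' cluster and an ``odd'' cluster (plus the remaining nodes, used only to realize the graph). Split $h(x)=\sum_t g_t(x_t,x_{t+1})$ so that the links with even $t$ are summed into the $F_i$ on the even cluster and those with odd $t$ on the odd cluster. By the zero-chain property, turning coordinate $t+1$ from inactive to active requires a node to already hold coordinate $t$, which for the opposite parity lives only in the other cluster. Now choose the gossip matrices $\mW(k)$ (satisfying Assumption~\ref{assum:gossip_matrix_sequence} with the prescribed $\chi$) exactly as in \cite{kovalev2021lower}, so that transporting one unit of ``activation'' across the cut costs $\Omega(\chi)$ communication rounds and, crucially, this cost does not amortize over several transfers in parallel --- which is why the exponent on $\chi$ is $1$ and not $1/2$, as on static networks where Chebyshev acceleration applies. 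Hence after $N$ communication rounds at most $O(N/\chi)$ coordinates are active on any node, and if the chain length is chosen $\Theta(N/\chi)$ then $\norm{\nabla h(x)}^2$ is bounded below by a constant on the whole memory of the system; rescaling $h$ by amplitude and smoothness factors so that Assumption~\ref{assum:smoothness_F} holds with constant $L$ and $F(x_0)-F^*=\Delta$ yields the claimed bound. Finally a uniformly random orthogonal rotation of the instance is applied so that the argument, originally valid for span-restricted methods, holds in expectation for every first-order decentralized algorithm.

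For the computation bound I would reuse the single-node finite-sum hard instance of \cite{fang2018spider} sharpened in \cite{li2021page}: on a node write $F_i=\frac1n\sum_{j=1}^n f_{ij}$ where the chain coordinates are grouped into blocks and the coupling between block $t$ and block $t+1$ is carried by only $\Theta(\sqrt n)$ of the $n$ components $f_{ij}$, so that a coupon-collector argument shows that activating $T$ blocks costs $\Omega(\sqrt n\,T)$ stochastic oracle calls in expectation on that node. Here communication is made irrelevant --- put the same instance on every node, or use a graph with $\chi=O(1)$ --- so $K$ local computations activate $O(K/\sqrt n)$ blocks. One then checks that this $f_{ij}$-family satisfies the $\hat{L}$-average smoothness of Assumption~\ref{assum:smoothness_F_average} (and that $\hat{L}$ can be taken as large as $\sqrt n L$, consistent with the Remark following Table~\ref{tab:complexity_nonconvex}), and rescales to $F(x_0)-F^*=\Delta$, obtaining $\E{\norm{\nabla F(\hat{x}^N)}^2}=\Omega(\sqrt n\,\hat{L}\Delta/K)$.

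The main obstacle I expect is the communication part: rigorously pushing the zero-chain activation argument through a sequence of time-varying gossip matrices while keeping the bound linear in $\chi$. One must (i) design the graph and weights so that the ``effective'' per-round progress across the cut is $\Theta(\chi^{-1})$, and (ii) prove, by a potential or inductive argument, that after any interleaving of $N$ communication and $K$ computation steps the number of activated coordinates on every node is $O(N/\chi)$ --- delicate because the algorithm is adaptive and memory states are arbitrary linear combinations, so one needs the span inclusions of Section~\ref{sec:lower_bounds} together with an invariant that the projection of every memory state onto the not-yet-activated coordinates stays negligibly small. A secondary but unavoidable technicality is decoupling $L$ from $\hat{L}$: the chain and the per-component amplitudes must be scaled so that $F_i$ is $L$-smooth with a prescribed $L$ while the average-smoothness constant $\hat{L}$ attains the desired value (up to the factor $\sqrt n$) and $F(x_0)-F^*=\Delta$ holds exactly; this bookkeeping, together with the random-rotation reduction from span-restricted to general first-order algorithms, is routine but must be handled carefully to match the constants in Table~\ref{tab:complexity_nonconvex}.
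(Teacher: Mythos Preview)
Your plan is essentially sound and would yield a valid proof, but it differs from the paper's construction in two structural ways.

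First, you propose a \emph{direct sum} of two independent hard instances (one for communication, one for computation) living in orthogonal coordinate blocks. The paper instead builds a \emph{single} instance on which both obstructions act simultaneously: the chain $l(x)$ is split into $l_1$ (odd links) and $l_2$ (even links), placed on disjoint node sets $S_1,S_2$ with a third set $S_3$ carrying zero; a sequence of star graphs whose center rotates through $S_3$ before ever touching $S_1\cup S_2$ gives $\chi=\Theta(m)$ and forces $\Omega(m)$ communications per coordinate activation. On the same chain, each $l_i$ is further split into $n$ pieces by residue classes mod $2n$, so that to advance one coordinate the algorithm must sample one \emph{specific} piece out of $n$, costing expected $n$ calls. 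The progress after $N$ communications and $K$ calls is then bounded by $\min(\Theta(N/m),\Theta(K/n))$, and the gradient lower bound is the \emph{maximum} of the two claimed terms. Your direct-sum route is cleaner to modularize but requires extra bookkeeping to realize both blocks under a single $(L,\hat L,\Delta)$ triple and a single gossip sequence.

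Second, your finite-sum obstruction follows the Fang--SPIDER template ($\Theta(\sqrt n)$ components carry each link, so hitting one costs $\Theta(\sqrt n)$ samples). The paper's splitting is different and arguably simpler: each coordinate link sits in exactly \emph{one} of the $n$ pieces (scaled by $n$), so advancing costs expected $n$ samples, but because the pieces have disjoint gradient supports one gets $\hat L=\sqrt n\,L$ directly (their Lemma on $\hat L$). Both routes land on $\Omega(\sqrt n\,\hat L\Delta/K)$, just trading sampling cost against the size of $\hat L$.

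Finally, the paper does not invoke a random-rotation reduction; it stays within the span-restricted algorithm class defined in the paper. Your addition of the rotation is a harmless strengthening.
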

\begin{proof}
    See \cref{proof_lower}.
\end{proof}
\begin{corollary}
\label{cor: lower-bound-required-num}
    In the setting of \cref{thm: lower-bound-sq-norm}, the number of communication rounds $N_c$ and local oracle calls $N_s$ required to reach $\epsilon$-accuracy ($\E {\norm{\nabla F(\hat{x}^N)}^2} \leq \epsilon^2$) is lower bounded as
    \begin{align*}
        N_s &= \Omega\left(n + \frac{\sqrt{n}\Delta\hat{L}}{\epsilon^2}\right); \qquad 
        N_c = \Omega\left(\frac{\chi L\Delta}{\epsilon^2}\right),
    \end{align*}
    respectively.
\end{corollary}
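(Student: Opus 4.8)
The plan is to derive the two complexity bounds directly from the two asymptotic lower bounds in \cref{thm: lower-bound-sq-norm} by a standard inversion argument, and then combine the computation count with the trivial $\Omega(n)$ lower bound coming from the fact that any algorithm producing a meaningful output must query each local function at least a constant number of times (equivalently, a problem with $n$ independent components cannot be solved below $n$ oracle calls). First I would treat the communication bound: from $\E{\norm{\nabla F(\hat x^N)}^2} = \Omega(\chi L\Delta/N)$ we know there is an absolute constant $c>0$ and a hard instance on which $\E{\norm{\nabla F(\hat x^N)}^2} \geq c\,\chi L\Delta/N$ for every first-order decentralized algorithm using $N$ communication rounds. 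Hence, to guarantee $\E{\norm{\nabla F(\hat x^N)}^2} \leq \epsilon^2$ one is forced to take $N \geq c\,\chi L\Delta/\epsilon^2$, i.e. $N_c = \Omega(\chi L\Delta/\epsilon^2)$.

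Next I would handle the oracle-call bound in the same way: from $\E{\norm{\nabla F(\hat x^N)}^2} = \Omega(\sqrt n\,\Delta\hat L/K)$ we get that achieving accuracy $\epsilon^2$ requires $K \geq c'\sqrt n\,\Delta\hat L/\epsilon^2$ local computations on some node, so $N_s = \Omega(\sqrt n\,\Delta\hat L/\epsilon^2)$. To upgrade this to $N_s = \Omega(n + \sqrt n\,\Delta\hat L/\epsilon^2)$ I would invoke the additional $\Omega(n)$ term, which is inherent to finite-sum problems: one exhibits (or reuses from \cref{proof_lower}) an instance whose $n$ summands carry independent information, so that any output whose error is bounded away from the trivial value must have touched $\Omega(n)$ of them; adding the two regimes and using $\max\{a,b\} \geq \tfrac12(a+b)$ yields the stated sum. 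Since both lower bounds in \cref{thm: lower-bound-sq-norm} hold simultaneously on the \emph{same} constructed instance (same $\{F_i\}$ and same $\{\mW(k)\}$), the two requirements on $N_c$ and $N_s$ hold simultaneously, which is exactly the content of the corollary.

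The only genuinely non-routine point is making sure the $\Omega(n)$ additive term is legitimately available in the time-varying decentralized finite-sum setting under Assumptions~\ref{assum:smoothness_F} and \ref{assum:smoothness_F_average}; I expect this to be inherited from the single-machine finite-sum lower bound of \cite{li2021page} (which already gives $\Omega(n + \sqrt n\,\hat L/\epsilon^2)$) simply by placing the hard finite-sum instance on one node and padding the others, so no new construction is needed. Everything else is the elementary observation that an asymptotic lower bound of the form "error $\gtrsim A/N$" inverts to "to reach error $\epsilon$ one needs $N \gtrsim A/\epsilon$," applied twice. I would therefore keep the proof short, citing \cref{thm: lower-bound-sq-norm} for the two rate bounds and \cref{proof_lower} (or \cite{li2021page}) for the $\Omega(n)$ term, and then assembling the final statement.
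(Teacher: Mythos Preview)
Your proposal is correct and matches the paper's own argument essentially step for step: invert the two rate bounds of \cref{thm: lower-bound-sq-norm} to get $N_c=\Omega(\chi L\Delta/\epsilon^2)$ and $N_s=\Omega(\sqrt{n}\hat L\Delta/\epsilon^2)$, then import the additive $\Omega(n)$ term from the single-machine finite-sum lower bound of \cite{li2021page} and combine via $\max\{a,b\}\asymp a+b$. The paper does not even spell out the inversion for $N_c$, so your write-up is if anything slightly more complete.
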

\begin{remark}
    The lower bound for oracle calls $N_s$ is obtained the following way. From Theorem~\ref{thm: lower-bound-sq-norm} we get $N_s = \Omega(\sqrt{n}\Delta\hat L/\eps^2)$. Additionally, in \citep{li2021page} it was shown that $N_s = \Omega(n)$ even for non-distributed optimization. Consequently, we have 
    \begin{align*}
    N_s = \Omega\cbraces{\max\cbraces{n, \frac{\sqrt{n}\Delta\hat L}{\eps^2}}} = \Omega\cbraces{n + \frac{\sqrt{n}\Delta\hat L}{\eps^2}}.
    \end{align*}
\end{remark}
\begin{remark}
    Since one of the main ideas of the proof of \cref{thm: lower-bound-sq-norm} is the selection of a special sequence of time-varying graphs, that is why we get an estimate on the number of communications $\sim\chi$. But, as has been shown in some papers (e.g., \citep{yuan2022revisiting}), a lower bound on the number of communications for decentralized optimization on static graphs is $\sim\sqrt{\chi}$. Applying the same topology to our proof and taking into account \cref{upper-static}, we can conclude that \textsc{GT-PAGE} with Chebyshev acceleration is optimal for the case of static graphs as well.
\end{remark}
\begin{minipage}{\textwidth}
	\begin{figure}[H]
        \centering
		\includegraphics[width=\textwidth]{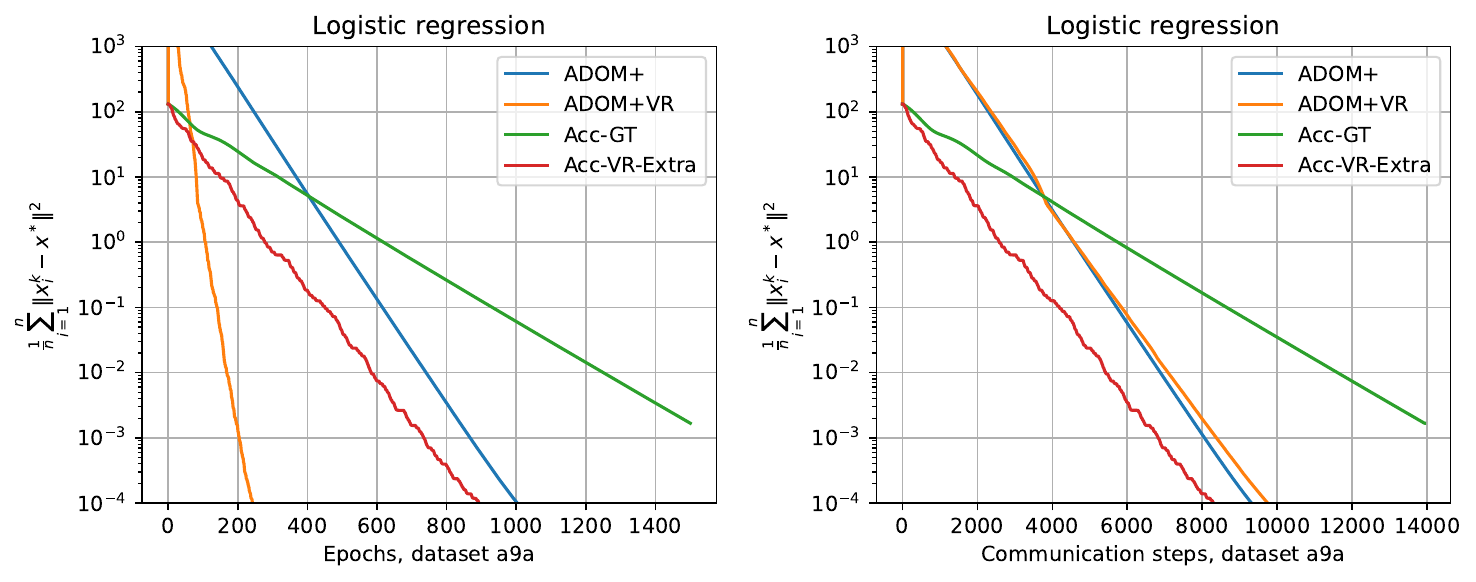}\\
		\centering (a) a9a
		\includegraphics[width=\textwidth]{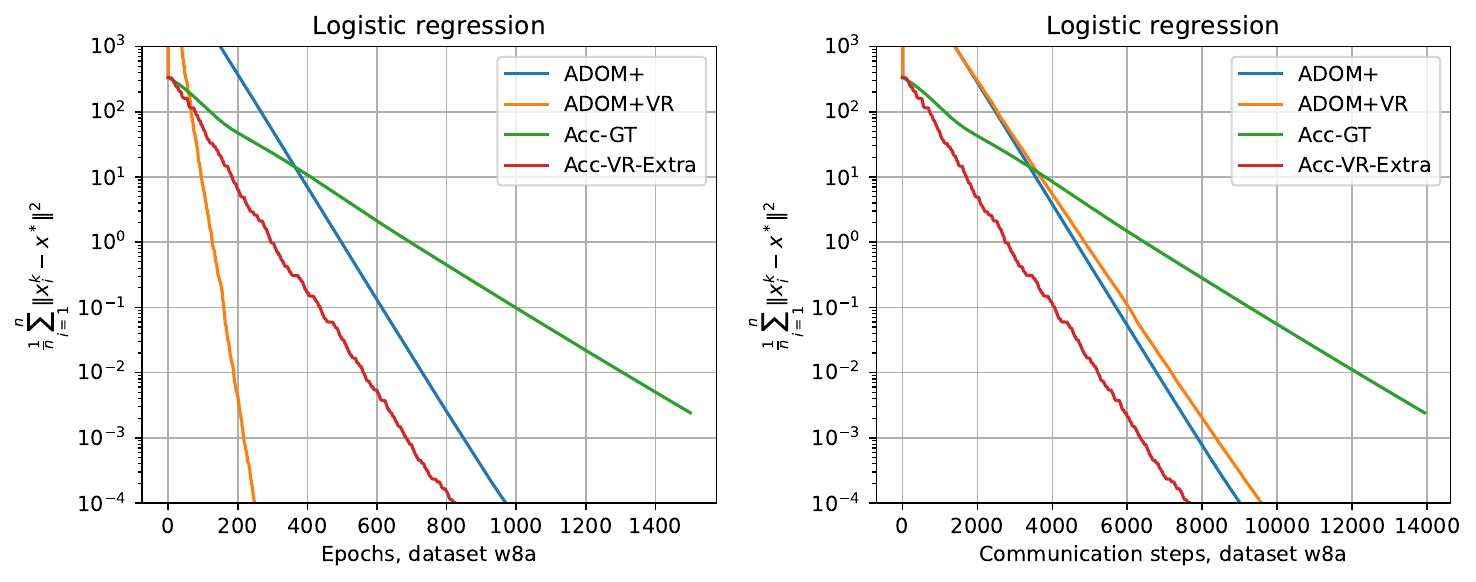}\\
		\centering (b) w8a
		\vspace{0.1cm}
		\caption{Comparison of communication and oracle complexities of Algorithm \ref{scary:alg} (ADOM+VR), ADOM+, Accelerated-GT (Acc-GT) and Accelerated-VR-Extra (Acc-VR-Extra) on logistic regression problem on LibSVM datasets.}
		\label{fig:strong}
	\end{figure}
\end{minipage}
\section{Numerical experiments}\label{sec:experiments}
In this section, we present numerical experiments comparing the proposed methods of this paper with state-of-the-art methods for both strongly convex and nonconvex problems.
\subsection{Setup}
\textbf{Datasets.} We utilize LibSVM \citep{chang2011libsvm} datasets in our experiments: a9a and w8a. Each dataset in an individual experiment is randomly distributed among the agents in the communication network.\\
\textbf{Topology.} We consider a random geometric graph with 50 vertices as the time-varying structure of the network.\\
\textbf{Loss function.} As an objective functions we choose logistic loss with $l_2$-regularization and non-linear least squares loss for strongly convex and nonconvex problems respectively.\\
\textbf{Optimization methods.} For our experiments we implemented proposed algorithms (\cref{scary:alg} and \cref{alg:yup}) with other existing approaches (see \cref{fig:strong} and \cref{fig:nonconvex}).
\subsection{Results}

\begin{minipage}{\textwidth}
\begin{figure}[H]
	\includegraphics[width=\textwidth]{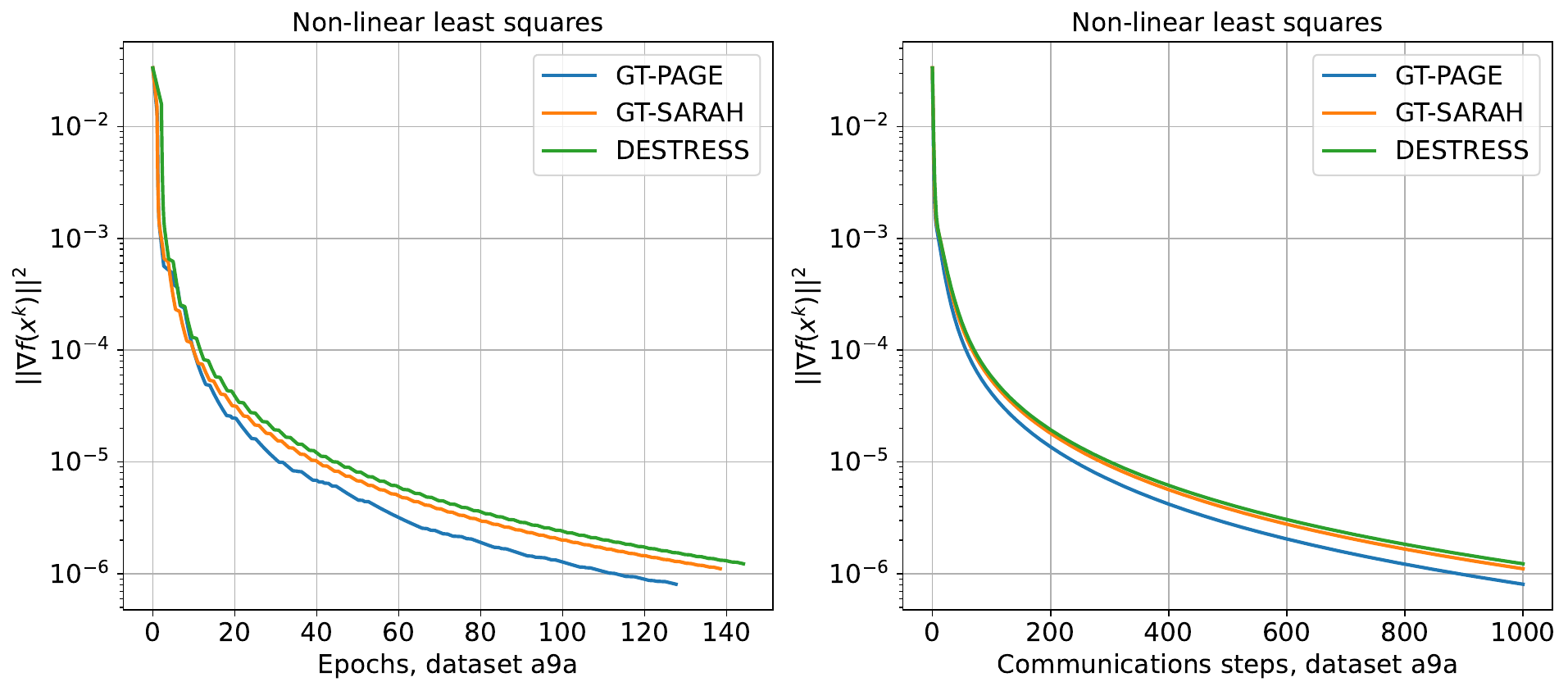}\\
	\centering (a) a9a \\
	\includegraphics[width=\textwidth]{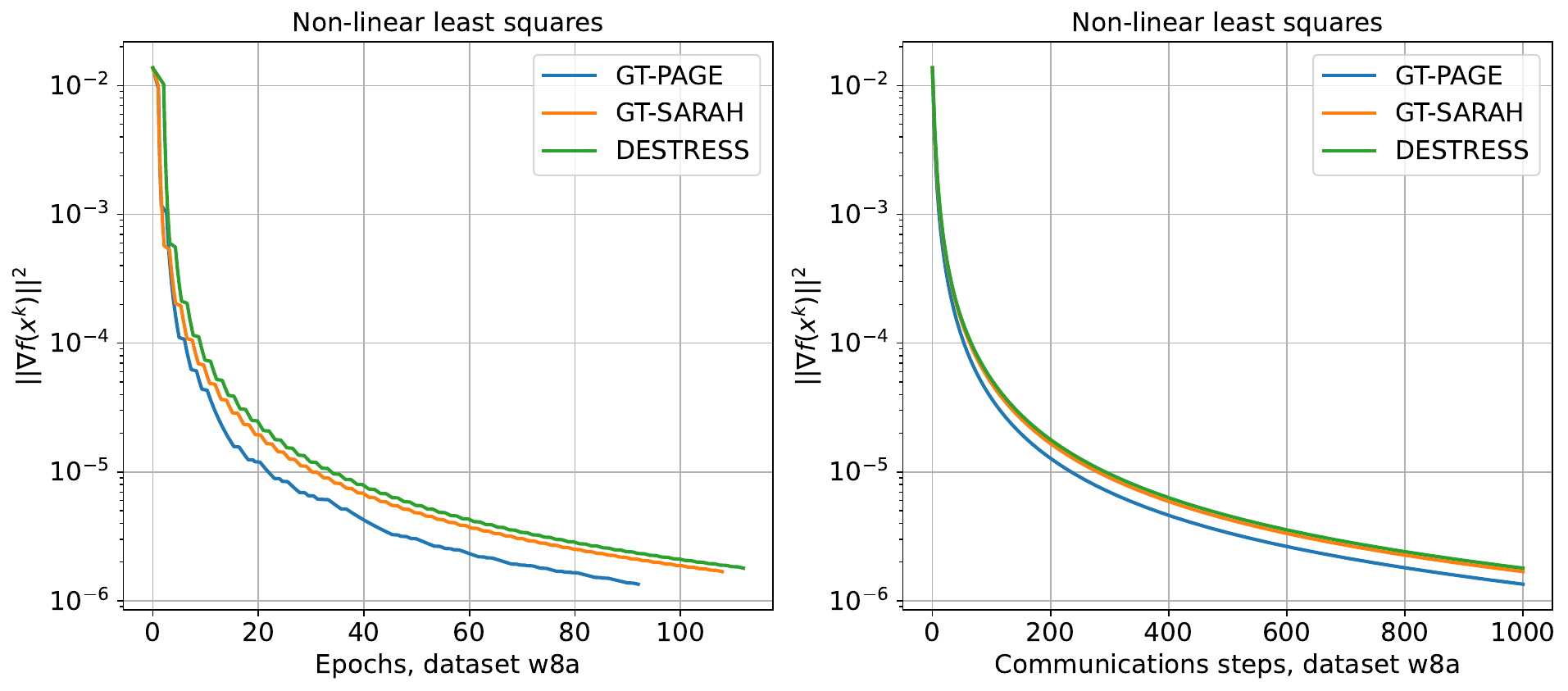} \\
	\centering (b) w8a
	\caption{Comparison of communication and oracle complexities of Algorithm \ref{alg:yup} (GT-PAGE), GT-SARAH and DESTRESS.}
	\label{fig:nonconvex}
\end{figure}
\end{minipage}

Experimental outcomes are shown in \cref{fig:strong} and \cref{fig:nonconvex}. Regarding the logistic regression problem, ADOM+VR outperforms other methods with respect to the number of epochs, i.e. the number of oracle calls. However, there is no gain in communication complexity compared to state-of-the-art approaches. At the same time, for the non-linear least squares problem, GT-PAGE behaves better with respect to other methods, but it does not demonstrate a strong superiority.
\bibliographystyle{apalike}
\bibliography{ref}

\newpage
\appendix
\onecolumn
{\centering \large \bf Supplementary Material}

We now establish the convergence rate of Algorithm~\ref{scary:alg}. This proof is for the most part a modified analysis of the ADOM+ algorithm with the addition of techniques corresponding to variance reduction setting. The parts not affected by the change were kept for the sake of completeness.

\section{Proof of Theorem~\ref{thm:adom+_conv}}\label{appendix:A}

By $\bg_F(x,y)$ we denote Bregman distance $\bg_F(x,y)\eqdef F(x) - F(y) - \<\g F(y),x-y>$.

By $\bb_F(x,y)$ we denote $\bb_F(x,y)\eqdef \bg_F(x,y) - \frac{\nu}{2}\sqn{x-y}$. \label{def:bb}

\begin{lemma}\label{eq:grad_upper}
	\begin{equation}
		\begin{aligned}
			\Ei{S^k}{\sqn{\nabla^k - \g F(x_g^k)}}
			&\leq
			\frac{2\oL}{b}\left(\bb_F(\omega^k, x^*) - \bb_F(x_g^k, x^*)\right)
			\\&-
			\frac{2\oL}{b}\<\g F(x_g^k) - \g F(x^*) -\nu x_g^k + \nu x^*, \omega^k - x_g^k>.
		\end{aligned}
	\end{equation}
	\begin{proof}
		Firstly note, that if $g^k =\g f_i(x^k_g) - \g f_i (\omega^k) + \g f_i(\omega^k)$, then
		\begin{equation}\label{scary:bregman_upper_lemma}
			\begin{aligned}
				\Ei{i}{\left\| g^k - \nabla f(x^k_g) \right\|^2} &= 
				\Ei{i}{\left\| \nabla f_i(x^k_g) - \nabla f_i(w^k) - \Ei{i}{\nabla f_i(x^k_g) - \nabla f_i(w^k)} \right\|^2} \\
				&\leq
				\Ei{i}{\left\| \nabla f_i(x^k) - \nabla f_i(w^k) \right\|^2} \\
				&\leq
				2\oL\left( f(w^k) - f(x^k) - \left\langle \nabla f(x^k), w^k - x^k \right\rangle \right).
			\end{aligned}
		\end{equation}
        Let us describe the main term
        \begin{align*}
            &\Ei{S^k_i}{\sqN{\left(\nabla^k\right)_i - \g F_i(x^k_{g,i})}} = \Ei{S^k_i}{\sqN{\frac{1}{b}\sum_{j \in S^k_i} \frac{1}{np_{ij}}\big[
			\g f_{ij}(x^k_{g, i}) - \g f_{ij} (\omega^k_i)\big] + \g F_i(\omega^k_i) - \g F_i(x^k_{g,i})}}
            \\&\overset{(1)}{=}
            \frac{1}{b}\Ei{j}{\sqN{\frac{1}{np_{ij}}\big[
			\g f_{ij}(x^k_{g, i}) - \g f_{ij} (\omega^k_i)\big] + \g F_i(\omega^k_i) - \g F_i(x^k_{g,i})}}
            \\&=
            \frac{1}{b}\Ei{j}{\sqN{\frac{1}{np_{ij}}\big[\left(
			\g f_{ij}(x^k_{g, i}) - \g f_{ij}(x^*)\right) - \left(\g f_{ij} (\omega^k_i) - \g f_{ij}(x^*)\right)\big] + \g F_i(\omega^k_i) - \g F_i(x^k_{g,i})}}
            \\&\overset{(2)}{\leq}
            \frac{1}{b}\Ei{j}{\sqN{\frac{1}{np_{ij}}\big[\left(
			\g f_{ij}(x^k_{g, i}) - \g f_{ij}(x^*) - \nu x^k_{g,i}+\nu x^*\right) - \left(\g f_{ij} (\omega^k_i) - \g f_{ij}(x^*) - \nu \omega_i+\nu x^*\right)\big]}}
            \\&\overset{(3)}{\leq}
            \sum_{j=1}^n \frac{p_{ij}}{b}\frac{2L_{ij}}{n^2p_{ij}^2}\bb_{f_{ij}}(x_{g,i}^k, x^*)
           \overset{(4)}{=}
            \frac{2\oL_i}{b}\left(\bb_{F_i}(\omega^k_i, x^*) - \bb_{F_i}(x_{g,i}^k, x^*) - \<\g\bb_{F_i}(x_{g,i}^k, x^*), \omega^k_i - x_{g,i}^k> \right),
        \end{align*}
		where
        \begin{itemize}
            \item[(1)] is due to independency of $(\xi^1_i, \xi^2_i,\ldots, \xi^b_i)$,
            \item[(2)] follows from the inequality $\E{\sqn{\xi}} \leq \E{\sqn{\xi + c}}$ if $\E{\xi}=0$ and $c$ is constant,
            \item[(3)] follows from \eqref{scary:bregman_upper_lemma} inequality,
            \item[(4)] follows from $p_{ij}=L_{ij}/(n\oL_i)$ definition.
        \end{itemize}
        The required inequality is the simple consequence of the previous statement.
	\end{proof}
\end{lemma}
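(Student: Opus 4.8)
The plan is to prove the bound node by node and then add up. Since $\sqn{\nabla^k-\g F(x_g^k)}=\sum_{i=1}^m\sqN{(\nabla^k)_i-\g F_i(x_{g,i}^k)}$ and the local mini-batches $S_1^k,\dots,S_m^k$ are independent, it suffices to show, for each fixed $i$, that $\Ei{S_i^k}{\sqN{(\nabla^k)_i-\g F_i(x_{g,i}^k)}}\le\tfrac{2\oL_i}{b}\,\bb_{F_i}(\omega_i^k,x_{g,i}^k)$; summing, using $\oL_i\le\oL$ and $\sum_i\bb_{F_i}(\omega_i^k,x_{g,i}^k)=\bb_F(\omega^k,x_g^k)$, then yields the claim. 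First I would record that the single-draw estimator $\tfrac{1}{np_{ij}}[\g f_{ij}(x_{g,i}^k)-\g f_{ij}(\omega_i^k)]+\g F_i(\omega_i^k)$ is unbiased for $\g F_i(x_{g,i}^k)$ (because $\sum_j p_{ij}\cdot\tfrac{1}{np_{ij}}\g f_{ij}(x)=\g F_i(x)$), so $(\nabla^k)_i$ is an average of $b$ i.i.d.\ unbiased copies and its second moment about $\g F_i(x_{g,i}^k)$ equals $\tfrac1b$ times the single-draw second moment; this is the source of the $1/b$ factor (and of the optimal batch size $b\sim\sqrt n$).

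For the single-draw term I would use the elementary inequality $\E{\sqn{\xi}}\le\E{\sqn{\xi+c}}$, valid for any mean-zero $\xi$ and deterministic $c$ (variance $\le$ second moment): this peels off the deterministic $\g F_i$-pieces and, with an appropriate choice of the shift $c$, lets me pass to the increments of the quadratically regularized functions $G_{ij}:=f_{ij}-\tfrac\nu2\sqn{\cdot}$, so that the Bregman object that eventually appears is $\bb_{F_i}=\bg_{G_i}$ and not $\bg_{F_i}$. Next I would invoke convexity and $L_{ij}$-smoothness of $f_{ij}$ via the co-coercivity inequality $\sqN{\g f_{ij}(a)-\g f_{ij}(b)}\le 2L_{ij}\,\bg_{f_{ij}}(b,a)$ and substitute $p_{ij}=L_{ij}/(n\oL_i)$: the $L_{ij}$'s cancel the $\tfrac1{n^2p_{ij}}$ factors, $\tfrac1n\sum_j\bg_{f_{ij}}$ collapses to $\bg_{F_i}$, the slack $\bg_{F_i}-\bb_{F_i}=\tfrac\nu2\sqn{\cdot}$ is absorbed, and I arrive at $\tfrac{2\oL_i}{b}\,\bb_{F_i}(\omega_i^k,x_{g,i}^k)$.

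To finish, after summing over $i$ I would rewrite $\bb_F(\omega^k,x_g^k)$ in the three-term form of the statement, which is purely algebraic: for $G:=F-\tfrac\nu2\sqn{\cdot}$ the Bregman identity
\begin{equation*}
\bg_G(a,c)=\bg_G(a,b)-\bg_G(c,b)-\langle\g G(c)-\g G(b),\,a-c\rangle
\end{equation*}
holds for all $a,b,c$; applying it with $a=\omega^k$, $c=x_g^k$, $b=x^*$, and using $\g G(x)=\g F(x)-\nu x$ together with $\bg_G=\bb_F$, reproduces the right-hand side exactly.

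The step I expect to be the main obstacle is controlling the $\nu$-shift. The estimator is built from the raw $f_{ij}$, which are assumed only convex (only the average $F_i$ is $\mu$-strongly convex, with $\nu<\mu$), so $G_{ij}=f_{ij}-\tfrac\nu2\sqn{\cdot}$ need not be convex and co-coercivity cannot be applied to it directly. The $-\tfrac\nu2\sqn{\cdot}$ improvement therefore has to be recovered either after averaging over $j$ (where $\tfrac1n\sum_j f_{ij}=F_i$ is strongly convex) or from the $-\sqN{\g F_i(x_{g,i}^k)-\g F_i(\omega_i^k)}$ remainder left behind when the second moment is replaced by the variance. Getting the sharper $\bb_F$ rather than merely $\bg_F$ on the right-hand side is precisely what makes the later Lyapunov telescoping close, so this bookkeeping must be done carefully.
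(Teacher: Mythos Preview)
Your plan is the same as the paper's: reduce to a single node, extract the $1/b$ factor from the i.i.d.\ mini-batch (the paper's step~(1)), bound the variance by a shifted second moment (step~(2)), apply co-coercivity together with the importance weights $p_{ij}=L_{ij}/(n\oL_i)$ (steps~(3)--(4)), and finish with the three-point Bregman identity for $G=F-\tfrac{\nu}{2}\sqn{\cdot}$ to obtain the stated form. The step you flag as the main obstacle---upgrading $\bg_F$ to $\bb_F$---is exactly the paper's step~(2)/(3), where the $\nu$-shift is put \emph{inside} the $1/(np_{ij})$ bracket and the co-coercivity bound is then applied to $f_{ij}-\tfrac{\nu}{2}\sqn{\cdot}$; the paper does not justify this beyond citing ``$\E\sqn{\xi}\le\E\sqn{\xi+c}$ for constant $c$'' (note that here the added term depends on $j$ through $p_{ij}$), so your instinct to scrutinize this point is correct and your proposed workarounds are the natural ones.
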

Further we will assume that the basis of the expectation is clear from the context.
\begin{lemma}
	Let $\tau_2$ be defined as follows:
	\begin{equation}\label{scary:tau2}
		\tau_2 = \min\left\{\frac{1}{2}, \max\left\{1, \frac{\sqrt{n}}{b}\right\}\sqrt{\frac{\mu}{L}}\right\}.
	\end{equation}
	Let $\tau_1$ be defined as follows:
	\begin{equation}\label{scary:tau1}
		\tau_1 = (1-\tau_0)(1/\tau_2 + 1/2)^{-1}.
	\end{equation}
	Let $\tau_0$ be defined as follows:
	\begin{equation}\label{scary:tau0}
		\tau_0 = \frac{\oL}{2L b}.
	\end{equation}
	Let $\eta$ be defined as follows:
	\begin{equation}\label{scary:eta}
		\eta = \left(L\left(\tau_2+\frac{2\tau_1}{1-\tau_1}\right)\right)^{-1}.
	\end{equation}
	Let $\alpha$ be defined as follows:
	\begin{equation}\label{scary:alpha}
		\alpha = \mu/2.
	\end{equation}
	Let $\nu$ be defined as follows:
	\begin{equation}\label{scary:nu}
		\nu = \mu/2.
	\end{equation}
	Let $\Psi_x^k$ be defined as follows:
	\begin{equation}\label{scary:Psi_x}
		\Psi_x^k = \left(\frac{1}{\eta} + \alpha\right)\sqn{x^{k} - x^*} + \frac{2}{\tau_2}\left(\bg_f(x_f^{k},x^*)-\frac{\nu}{2}\sqn{x_f^{k} - x^*} \right)
	\end{equation}
	Then the following inequality holds:
	\begin{equation}\label{scary:eq:x}
		\begin{split}
			\Psi_x^{k+1} &\leq \left(1 - \frac{1}{20}\min\left\{\sqrt{\frac{\mu}{L}}, b\sqrt{\frac{\mu}{nL}}\right\}\right)\Psi_x^k
    		+
    		2\E{\< y^{k+1} - y^*,x^{k+1} - x^*>}
    		\\&+
            \frac{\oL}{Lb}\left(\frac{1}{\tau_1} - 1\right)\left(\bb_F(\omega^k,x^*)-\bb_F(x_g^k,x^*)\right)
            -
            \bb_F(x_g^k,x^*)
            -
            \frac{1}{2}\bb_F(x_f^k,x^*)
    		\\&+
    		\frac{\oL}{Lb}\<\g F(x_g^k) - \g F(x^*) -\nu x_g^k + \nu x^*, \omega^k - x_g^k>.
		\end{split}
	\end{equation}
\end{lemma}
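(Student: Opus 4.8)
The plan is to follow the standard ADOM+ Lyapunov-descent template, isolating the terms coming from the $x$-block updates (Lines~\ref{scary:line:x:1}--\ref{scary:line:x:3}) and the $\omega$-update (Line~\ref{scary:line_omega}), and then absorbing the stochastic gradient error via \cref{eq:grad_upper}. First I would expand $\sqn{x^{k+1}-x^*}$ using the update in Line~\ref{scary:line:x:2}, written in the implicit form $x^{k+1}=x^k+\eta\alpha(x_g^k-x^{k+1})-\eta[\nabla^k-\nu x_g^k-y^{k+1}]$; rearranging gives $(1/\eta+\alpha)(x^{k+1}-x^*) = (1/\eta)(x^k-x^*)+\alpha(x_g^k-x^*)-[\nabla^k-\nu x_g^k-y^{k+1}]$. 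Taking squared norms and using the three-point identity $2\<a-b,a-c> = \sqn{a-b}+\sqn{a-c}-\sqn{b-c}$ produces $(1/\eta+\alpha)\sqn{x^{k+1}-x^*}$ bounded by a convex-combination-like expression in $\sqn{x^k-x^*}$ and $\sqn{x_g^k-x^*}$, minus $(1/\eta)\sqn{x^{k+1}-x^k}$-type terms, plus the cross term $2\<y^{k+1}-y^*,x^{k+1}-x^*>$ (after using the optimality condition \cref{opt:x}, i.e. $\g F(x^*)-\nu x^*-y^*=0$), plus an inner product with $\nabla^k-\g F(x_g^k)$.

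Next I would handle the $x_f$ block. Using Line~\ref{scary:line:x:3}, $x_f^{k+1}=x_g^k+\tau_2(x^{k+1}-x^k)$, together with $L$-smoothness of $F$ (\cref{assum:smoothness_F}) and $\nu$-strong convexity-type bookkeeping, I would bound $\bg_f(x_f^{k+1},x^*)-\tfrac{\nu}{2}\sqn{x_f^{k+1}-x^*}$ in terms of $\bb_F(x_g^k,x^*)$, a descent term proportional to $\sqn{x^{k+1}-x^k}$ scaled by $L\tau_2^2$, and a linear term; the choice of $\eta$ in \cref{scary:eta} is exactly what makes the $\sqn{x^{k+1}-x^k}$ coefficients cancel when combined with the $x$-block estimate. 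The role of $x_g^k = \tau_1 x^k+\tau_0\omega^k+(1-\tau_1-\tau_0)x_f^k$ (Line~\ref{scary:line:x:1}) is to convert $\sqn{x_g^k-x^*}$ and the Bregman terms at $x_g^k$ into convex combinations of the corresponding quantities at $x^k$, $\omega^k$, $x_f^k$; convexity of $\sqn{\cdot-x^*}$ and of $\bg_f(\cdot,x^*)$ gives these for free, and the $\tau_0\omega^k$ component is what generates the $\bb_F(\omega^k,x^*)$ term on the right-hand side. Then I would invoke \cref{eq:grad_upper} to replace $\E\sqn{\nabla^k-\g F(x_g^k)}$ by $\tfrac{2\oL}{b}(\bb_F(\omega^k,x^*)-\bb_F(x_g^k,x^*)) - \tfrac{2\oL}{b}\<\g F(x_g^k)-\g F(x^*)-\nu x_g^k+\nu x^*,\omega^k-x_g^k>$, which furnishes the last two lines of \cref{scary:eq:x} once the coefficient of $\bb_F(\omega^k,x^*)$ is matched: the bookkeeping should reveal that the surviving coefficient is $\tfrac{\oL}{Lb}(1/\tau_1-1)$, consistent with the definitions \cref{scary:tau1,scary:tau0}.

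Finally I would collect all terms, substitute $\alpha=\nu=\mu/2$ (\cref{scary:alpha,scary:nu}), and verify the contraction factor: the $(1/\eta+\alpha)\sqn{x^k-x^*}$ part contracts at rate controlled by $\eta\alpha$, the $\tfrac{2}{\tau_2}\bb_F(x_f^k,x^*)$ part contracts via the leftover $\bb_F(x_f^k,x^*)$ term (the $-\tfrac12\bb_F(x_f^k,x^*)$ on the RHS is the residual after extracting the contraction), and using $\tau_2 = \min\{\tfrac12,\max\{1,\sqrt n/b\}\sqrt{\mu/L}\}$ and $\eta^{-1}=L(\tau_2+2\tau_1/(1-\tau_1))$ one shows both rates are at least $\tfrac{1}{20}\min\{\sqrt{\mu/L}, b\sqrt{\mu/(nL)}\}$, giving the claimed $\Psi_x^k$ recursion. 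The main obstacle I anticipate is the careful tracking of constants through the three interlocking step-size definitions — in particular checking that the coefficient of $\sqn{x^{k+1}-x^k}$ is nonpositive (this is where $\eta$ and the $2\tau_1/(1-\tau_1)$ term are tuned) and that the $\bb_F(x_g^k,x^*)$ terms, which appear with both signs from the gradient-error bound and from the $x_f$-smoothness bound, combine to leave exactly $-\bb_F(x_g^k,x^*)$ as stated rather than something with an unfavorable sign.
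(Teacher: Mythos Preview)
Your outline matches the paper's approach and the overall skeleton is correct. One step needs correcting, though: the role of Line~\ref{scary:line:x:1} is \emph{not} to upper-bound $\sqn{x_g^k-x^*}$ or $\bb_F(x_g^k,x^*)$ by convexity --- those quantities end up carrying a negative sign in the final inequality, so Jensen would go the wrong way, and it would also introduce an unwanted $\bb_F(x^k,x^*)$ term that is not part of $\Psi_x^k$. What the paper actually does is invert Line~\ref{scary:line:x:1} to write
\[
x^k-x^*=(x_g^k-x^*)+\tfrac{\tau_0}{\tau_1}(x_g^k-\omega^k)+\tfrac{1-\tau_1-\tau_0}{\tau_1}(x_g^k-x_f^k)
\]
and substitute this into the inner products $-2\<\g F(x_g^k)-\g F(x^*),x^k-x^*>$ and $2\nu\<x_g^k-x^*,x^k-x^*>$; the $(x_g^k-x_f^k)$ piece is then bounded via $\mu$-strong convexity of $\bg_F(\cdot,x^*)$ (the three-point Bregman inequality), while the $(x_g^k-\omega^k)$ piece is kept as an inner product.

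Relatedly, the $\bb_F(\omega^k,x^*)$ appearing in the final bound comes entirely from \cref{eq:grad_upper}, applied after a Young/Cauchy--Schwarz step on $-\tfrac{2}{\tau_2}\<\nabla^k-\g F(x_g^k),x_f^{k+1}-x_g^k>$ with weight $\tfrac{1-\tau_1}{2L\tau_1}$ --- not from the $\tau_0\omega^k$ component of $x_g^k$. The definition $\tau_0=\tfrac{\oL}{2Lb}$ is tuned precisely so that the inner-product contribution $\tfrac{2\tau_0}{\tau_1}\<\cdot,\omega^k-x_g^k>$ from the substitution above combines with the one produced by the lemma to leave the clean coefficient $\tfrac{\oL}{Lb}$ on the last line of \eqref{scary:eq:x}. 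With this adjustment your plan goes through exactly as in the paper.
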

\begin{proof}
	\begin{align*}
		\frac{1}{\eta}\sqn{x^{k+1}  - x^*}
		&=
		\frac{1}{\eta}\sqn{x^k - x^*}+\frac{2}{\eta}\<x^{k+1} - x^k,x^{k+1}- x^*> - \frac{1}{\eta}\sqn{x^{k+1} - x^k}.
	\end{align*}
	Using Line~\ref{scary:line:x:2} of Algorithm~\ref{scary:alg} we get
	\begin{align*}
		\frac{1}{\eta}\sqn{x^{k+1}  - x^*}
		&=
		\frac{1}{\eta}\sqn{x^k - x^*}
		+
		2\alpha\<x_g^k - x^{k+1},x^{k+1}- x^*>
		\\&-
		2\<\nabla^k - \nu x_g^k - y^{k+1},x^{k+1} - x^*>
		-
		\frac{1}{\eta}\sqn{x^{k+1} - x^k}
		\\&=
		\frac{1}{\eta}\sqn{x^k - x^*}
		+
		2\alpha\<x_g^k - x^*- x^{k+1} + x^*,x^{k+1}- x^*>
		\\&-
		2\<\nabla^k - \nu \x_g^k - y^{k+1},x^{k+1} - x^*>
		-
		\frac{1}{\eta}\sqn{x^{k+1} - x^k}
		\\&\leq
		\frac{1}{\eta}\sqn{x^k - x^*}
		-
		\alpha\sqn{x^{k+1} - x^*} + \alpha\sqn{x_g^k - x^*}
		-
		2\<\nabla^k - \nu x_g^k - y^{k+1},x^{k+1} - x^*>
		\\&-
		\frac{1}{\eta}\sqn{x^{k+1} - x^k}.
	\end{align*}
	Using optimality condition \eqref{opt:x} we get
	\begin{align*}
		\frac{1}{\eta}\sqn{x^{k+1}  - x^*}
		&\leq
		\frac{1}{\eta}\sqn{x^k - x^*}
		-
		\alpha\sqn{x^{k+1} - x^*} + \alpha\sqn{x_g^k - x^*}
		-
		\frac{1}{\eta}\sqn{x^{k+1} - x^k}
		\\&
		-2\<\g F(x_g^k) - \g F(x^*),x^{k+1} - x^*>
		+
		2\nu\< x_g^k - x^*,x^{k+1} - x^*>
		+
		2\< y^{k+1} - y^*,x^{k+1} - x^*>
		\\&
		-2\<\nabla^k - \g F(x_g^k),x^{k+1} - x^*>.
	\end{align*}
	Using Line~\ref{scary:line:x:3} of Algorithm~\ref{scary:alg} we get
	\begin{align*}
		\frac{1}{\eta}\sqn{x^{k+1}  - x^*}
		&\leq
		\frac{1}{\eta}\sqn{x^k - x^*}
		-
		\alpha\sqn{x^{k+1} - x^*} + \alpha\sqn{x_g^k - x^*}
		-
		\frac{1}{\eta\tau_2^2}\sqn{x_f^{k+1} - x_g^k}
		\\&
		-2\<\g F(x_g^k) - \g F(x^*),x^k - x^*>
		+
		2\nu\< x_g^k - x^*,x^k - x^*>
		+
		2\< y^{k+1} - y^*,x^{k+1} - x^*>
		\\&-
		\frac{2}{\tau_2}\<\g F(x_g^k) - \g F(x^*),x_f^{k+1} - x_g^k>
		+
		\frac{2\nu}{\tau_2}\< x_g^k - x^*,x_f^{k+1} - x_g^k>
		\\&-
		2\<\nabla^k - \g F(x_g^k) - \nu(\x_g^k-x_g^k),x^{k+1} - x^*>
		\\&=
		\frac{1}{\eta}\sqn{x^k - x^*}
		-
		\alpha\sqn{x^{k+1} - x^*} + \alpha\sqn{x_g^k - x^*}
		-
		\frac{1}{\eta\tau_2^2}\sqn{x_f^{k+1} - x_g^k}
		\\&
		-2\<\g F(x_g^k) - \g F(x^*),x^k - x^*>
		+
		2\nu\< x_g^k - x^*,x^k - x^*>
		+
		2\< y^{k+1} - y^*,x^{k+1} - x^*>
		\\&-
		\frac{2}{\tau_2}\<\g F(x_g^k) - \g F(x^*),x_f^{k+1} - x_g^k>
		+
		\frac{\nu}{\tau_2}\left(\sqn{x_f^{k+1} - x^*} - \sqn{x_g^k - x^*}-\sqn{x_f^{k+1} - x_g^k}\right)
		\\&
		-
		2\<\nabla^k - \g F(x_g^k),x^{k+1} - x^*>.
	\end{align*}
	Using the $L$-smoothness property of $\bg_F(x, x^*)$ with respect to $x$, which is derived from the $L$-smoothness of $F(x)$, we obtain
	\begin{align*}
		\frac{1}{\eta}\sqn{x^{k+1}  - x^*}
		&\leq
		\frac{1}{\eta}\sqn{x^k - x^*}
		-
		\alpha\sqn{x^{k+1} - x^*} + \alpha\sqn{x_g^k - x^*}
		-
		\frac{1}{\eta\tau_2^2}\sqn{x_f^{k+1} - x_g^k}
		\\&
		-2\<\g F(x_g^k) - \g F(x^*),x^k - x^*>
		+
		2\nu\< x_g^k - x^*,x^k - x^*>
		+
		2\< y^{k+1} - y^*,x^{k+1} - x^*>
		\\&-
		\frac{2}{\tau_2}\<\g F(x_g^k) - \g F(x^*),x_f^{k+1} - x_g^k>
		+
		\frac{\nu}{\tau_2}\left(\sqn{x_f^{k+1} - x^*} - \sqn{x_g^k - x^*}-\sqn{x_f^{k+1} - x_g^k}\right)
		\\&-
		2\<\nabla^k - \g F(x_g^k),x^{k+1} - x^*>
		\\&\leq
		\frac{1}{\eta}\sqn{x^k - x^*}
		-
		\alpha\sqn{x^{k+1} - x^*} + \alpha\sqn{x_g^k - x^*}
		-
		\frac{1}{\eta\tau_2^2}\sqn{x_f^{k+1} - x_g^k}
		\\&
		-2\<\g F(x_g^k) - \g F(x^*),x^k - x^*>
		+
		2\nu\< x_g^k - x^*,x^k - x^*>
		+
		2\< y^{k+1} - y^*,x^{k+1} - x^*>
		\\&-
		\frac{2}{\tau_2}\left(\bg_f(x_f^{k+1},x^*) - \bg_f(x_g^k,x^*) - \frac{L}{2}\sqn{x_f^{k+1} - x_g^k}\right)
		\\&+
		\frac{\nu}{\tau_2}\left(\sqn{x_f^{k+1} - x^*} - \sqn{x_g^k - x^*}-\sqn{x_f^{k+1} - x_g^k}\right)
		-
		2\<\nabla^k - \g F(x_g^k),x^{k+1} - x^*>
		\\&=
		\frac{1}{\eta}\sqn{x^k - x^*}
		-
		\alpha\sqn{x^{k+1} - x^*} + \alpha\sqn{x_g^k - x^*}
		+
		\left(\frac{L - \nu}{\tau_2}-\frac{1}{\eta\tau_2^2}\right)
		\sqn{x_f^{k+1} - x_g^k}
		\\&
		-2\<\g F(x_g^k) - \g F(x^*),x^k - x^*>
		+
		2\nu\< x_g^k - x^*,x^k - x^*>
		+
		2\< y^{k+1} - y^*,x^{k+1} - x^*>
		\\&-
		\frac{2}{\tau_2}\left(\bg_f(x_f^{k+1},x^*) - \bg_f(x_g^k,x^*)\right)
		+
		\frac{\nu}{\tau_2}\left(\sqn{x_f^{k+1} - x^*} - \sqn{x_g^k - x^*}\right)
		\\&-
		2\<\nabla^k - \g F(x_g^k),x^{k+1} - x^*>
	\end{align*}
	Using Line~\ref{scary:line:x:1} of Algorithm~\ref{scary:alg} we get
	\begin{align*}
		\frac{1}{\eta}\sqn{x^{k+1}  - x^*}
		&\leq
		\frac{1}{\eta}\sqn{x^k - x^*}
		-
		\alpha\sqn{x^{k+1} - x^*} + \alpha\sqn{x_g^k - x^*}
		+
		\left(\frac{L - \nu}{\tau_2}-\frac{1}{\eta\tau_2^2}\right)
		\sqn{x_f^{k+1} - x_g^k}
		\\&
		-2\<\g F(x_g^k) - \g F(x^*),x_g^k - x^*>
		+
		2\nu\sqn{x_g^k - x^*}
		+
		\frac{2(1-\tau_1-\tau_0)}{\tau_1}\<\g F(x_g^k) - \g F(x^*),x_f^k - x_g^k>
		\\&+
		\frac{2\tau_0}{\tau_1}\<\g F(x_g^k) - \g F(x^*),\omega^k - x_g^k>
		+
		\frac{2\nu(1-\tau_1-\tau_0)}{\tau_1}\<x_g^k - x_f^k,x_g^k - x^*>
		+
		\frac{2\nu\tau_0}{\tau_1}\<x_g^k - \omega^k,x_g^k - x^*>
		\\&+
		2\< y^{k+1} - y^*,x^{k+1} - x^*>
		-
		\frac{2}{\tau_2}\left(\bg_f(x_f^{k+1},x^*) - \bg_f(x_g^k,x^*)\right)
		+
		\frac{\nu}{\tau_2}\left(\sqn{x_f^{k+1} - x^*} - \sqn{x_g^k - x^*}\right)
		\\&-
		2\<\nabla^k - \g F(x_g^k),x^{k+1} - x^*>
		\\&=
		\frac{1}{\eta}\sqn{x^k - x^*}
		-
		\alpha\sqn{x^{k+1} - x^*} + \alpha\sqn{x_g^k - x^*}
		+
		\left(\frac{L - \nu}{\tau_2}-\frac{1}{\eta\tau_2^2}\right)
		\sqn{x_f^{k+1} - x_g^k}
		\\&
		-2\<\g F(x_g^k) - \g F(x^*),x_g^k - x^*>
		+
		2\nu\sqn{x_g^k - x^*}
		+
		\frac{2(1-\tau_1-\tau_0)}{\tau_1}\<\g F(x_g^k) - \g F(x^*),x_f^k - x_g^k>
		\\&+
		\frac{2\tau_0}{\tau_1}\<\g F(x_g^k) - \g F(x^*),\omega^k - x_g^k>
		+
		\frac{\nu(1-\tau_1-\tau_0)}{\tau_1}\left(\sqn{x_g^k- x_f^k} + \sqn{x_g^k - x^*} - \sqn{x_f^k - x^*}\right)
		\\&+
		\frac{2\nu\tau_0}{\tau_1}\<x_g^k - \omega^k,x_g^k - x^*>
		+
		2\< y^{k+1} - y^*,x^{k+1} - x^*>
		-
		\frac{2}{\tau_2}\left(\bg_f(x_f^{k+1},x^*) - \bg_f(x_g^k,x^*)\right)
		\\&+
		\frac{\nu}{\tau_2}\left(\sqn{x_f^{k+1} - x^*} - \sqn{x_g^k - x^*}\right)
		-
		2\<\nabla^k - \g F(x_g^k),x^{k+1} - x^*>.
	\end{align*}
	By applying $\mu$-strong convexity of $\bg_F(x, x^*)$ in $x$, following from $\mu$-strong convexity of $F(x)$, we obtain
	\begin{align*}
		\frac{1}{\eta}\sqn{x^{k+1}  - x^*}
		&\leq
		\frac{1}{\eta}\sqn{x^k - x^*}
		-
		\alpha\sqn{x^{k+1} - x^*} + \alpha\sqn{x_g^k - x^*}
		+
		\left(\frac{L - \nu}{\tau_2}-\frac{1}{\eta\tau_2^2}\right)
		\sqn{x_f^{k+1} - x_g^k}
		\\&
		-2\bg_F(x_g^k,x^*) - \mu\sqn{x_g^k - x^*}
		+
		2\nu\sqn{x_g^k - x^*}
		\\&+
		\frac{2(1-\tau_1-\tau_0)}{\tau_1}\left(\bg_F(x_f^k,x^*) - \bg_F(x_g^k,x^*) - \frac{\mu}{2}\sqn{x_f^k - x_g^k}\right)
		+
		\frac{2\tau_0}{\tau_1}\<\g F(x_g^k) - \g F(x^*),\omega^k - x_g^k>
		\\&+
		\frac{\nu(1-\tau_1-\tau_0)}{\tau_1}\left(\sqn{x_g^k- x_f^k} + \sqn{x_g^k - x^*} - \sqn{x_f^k - x^*}\right)
		+
		\frac{2\nu\tau_0}{\tau_1}\<x_g^k - \omega^k,x_g^k - x^*>
		\\&+
		2\< y^{k+1} - y^*,x^{k+1} - x^*>
		-
		\frac{2}{\tau_2}\left(\bg_f(x_f^{k+1},x^*) - \bg_f(x_g^k,x^*)\right)
		\\&+
		\frac{\nu}{\tau_2}\left(\sqn{x_f^{k+1} - x^*} - \sqn{x_g^k - x^*}\right)
		-
		2\<\nabla^k - \g F(x_g^k),x^{k+1} - x^*>.
		\\&=
		\frac{1}{\eta}\sqn{x^k - x^*}
		-
		\alpha\sqn{x^{k+1} - x^*}
		+
		\frac{2(1-\tau_1-\tau_0)}{\tau_1}\left(\bg_F(x_f^k,x^*) - \frac{\nu}{2}\sqn{x_f^k - x^*}\right)
		\\&-
		\frac{2}{\tau_2}\left(\bg_f(x_f^{k+1},x^*)-\frac{\nu}{2}\sqn{x_f^{k+1} - x^*} \right)
		+
		2\< y^{k+1} - y^*,x^{k+1} - x^*>
		\\&+
		2\left(\frac{1}{\tau_2}-\frac{1-\tau_0}{\tau_1}\right)\bg_F(x_g^k,x^*)
		+
		\left(\alpha - \mu + \nu+\frac{(1-\tau_0)\nu}{\tau_1}-\frac{\nu}{\tau_2}\right)\sqn{x_g^k - x^*}
		\\&+
		\left(\frac{L - \nu}{\tau_2}-\frac{1}{\eta\tau_2^2}\right)
		\sqn{x_f^{k+1} - x_g^k}
		+
		\frac{(1-\tau_1-\tau_0)(\nu-\mu)}{\tau_1}\sqn{x_f^k - x_g^k}
		\\&+
		\frac{2\tau_0}{\tau_1}\<\g F(x_g^k) - \g F(x^*),\omega^k - x_g^k>
		+
		\frac{2\nu\tau_0}{\tau_1}\<x_g^k - \omega^k,x_g^k - x^*>
		\\&-
		2\<\nabla^k - \g F(x_g^k),x^{k+1} - x^*>.
	\end{align*}
	Utilizing $\eta$ as defined in \eqref{scary:eta}, $\tau_1$ as defined in \eqref{scary:tau1}, and considering that $\nu < \mu$, we derive
	\begin{align*}
		\frac{1}{\eta}\sqn{x^{k+1}  - x^*}
		&\leq
		\frac{1}{\eta}\sqn{x^k - x^*}
		-
		\alpha\sqn{x^{k+1} - x^*}
		+
		\frac{2(1-\tau_2/2)}{\tau_2}\bb_F(x_f^k,x^*)
		\\&-
		\frac{2}{\tau_2}\bb_F(x_f^{k+1},x^*)
		+
		2\< y^{k+1} - y^*,x^{k+1} - x^*>
		\\&-\bg_F(x_g^k,x^*)
		+
		\left(\alpha - \mu + \frac{3\nu}{2}\right)\sqn{x_g^k - x^*}
		-
		\frac{2L\tau_1}{\tau_2^2(1-\tau_1)}\sqn{x_f^{k+1} - x_g^k}
		\\&+
		\frac{2\tau_0}{\tau_1}\<\left(\g F(x_g^k) - \nu x_g^k\right) - \left(\g F(x^*) - \nu x^*\right),\omega^k - x_g^k>
		\\&-
		2\<\nabla^k - \g F(x_g^k),x^{k+1} - x^*>.
	\end{align*}
	Using $\alpha$ defined by \eqref{scary:alpha} and $\nu$ defined by \eqref{scary:nu} we get
	\begin{align*}
		\frac{1}{\eta}\sqn{x^{k+1}  - x^*}
		&\leq
		\frac{1}{\eta}\sqn{x^k - x^*}
		-
		\alpha\sqn{x^{k+1} - x^*}
		+
		\frac{2(1-\tau_2/2)}{\tau_2}\bb_F(x_f^k,x^*)
		\\&-
		\frac{2}{\tau_2}\bb_F(x_f^{k+1},x^*)
		+
		2\< y^{k+1} - y^*,x^{k+1} - x^*>
		\\&-
		\left(\bg_F(x_g^k,x^*) - \frac{\nu}{2}\sqn{x_g^k - x^*}\right)
		-
		\frac{2L\tau_1}{\tau_2^2(1-\tau_1)}\sqn{x_f^{k+1} - x_g^k}
		\\&+
		\frac{2\tau_0}{\tau_1}\<\left(\g F(x_g^k) - \nu x_g^k\right) - \left(\g F(x^*) - \nu x^*\right),\omega^k - x_g^k>
		\\&-
		2\<\nabla^k - \g F(x_g^k),x^{k+1} - x^*>.
	\end{align*}
	Taking the expectation over $i$ at the $k$th step, using that $x^k-x^*$ is independent of $i$ and that $\E{\nabla^k - \g F(x_g^k)} = 0$ we get
	\begin{align*}
		\frac{1}{\eta}\E{\sqn{x^{k+1}  - x^*}}
		&\leq
		\frac{1}{\eta}\sqn{x^k - x^*}
		-
		\alpha\E{\sqn{x^{k+1} - x^*}}
		+
		\frac{2(1-\tau_2/2)}{\tau_2}\bb_F(x_f^k,x^*)
		\\&-
		\frac{2}{\tau_2}\E{\bb_F(x_f^{k+1},x^*)}
		+
		2\E{\< y^{k+1} - y^*,x^{k+1} - x^*>}
		\\&-
		\bb_F(x_g^k,x^*)
		-
		\frac{2L\tau_1}{\tau_2^2(1-\tau_1)}\E{\sqn{x_f^{k+1} - x_g^k}}
		\\&+
		\frac{2\tau_0}{\tau_1}\<\left(\g F(x_g^k) - \nu x_g^k\right) - \left(\g F(x^*) - \nu x^*\right),\omega^k - x_g^k>
		\\&-
		2\E{\<\nabla^k - \g F(x_g^k),x^{k+1} - x^k>}.
	\end{align*}
	Using Line~\ref{scary:line:x:3} of Algorithm~\ref{scary:alg} and the Cauchy–Schwarz inequality for $\<\nabla^k - \g F(x_g^k),x_f^{k+1} - x_g^k>$ we get
	\begin{align*}
		\frac{1}{\eta}\E{\sqn{x^{k+1}  - x^*}}
		&\leq
		\frac{1}{\eta}\sqn{x^k - x^*}
		-
		\alpha\E{\sqn{x^{k+1} - x^*}}
		+
		\frac{2(1-\tau_2/2)}{\tau_2}\bb_F(x_f^k,x^*)
		\\&-
		\frac{2}{\tau_2}\E{\bb_F(x_f^{k+1},x^*)}
		+
		2\E{\< y^{k+1} - y^*,x^{k+1} - x^*>}
		\\&-
		\bb_F(x_g^k,x^*)
		+
		\frac{1-\tau_1}{2L\tau_1}\E{\sqn{\nabla^k - \g F(x_g^k)}}
		\\&+
		\frac{2\tau_0}{\tau_1}\<\left(\g F(x_g^k) - \nu x_g^k\right) - \left(\g F(x^*) - \nu x^*\right),\omega^k - x_g^k>.
	\end{align*}
	Using lemma~\ref{eq:grad_upper} and $\tau_0$ definition \eqref{scary:tau0} we get
	\begin{align*}
		\frac{1}{\eta}\E{\sqn{x^{k+1}  - x^*}}
		&\leq
		\frac{1}{\eta}\sqn{x^k - x^*}
		-
		\alpha\E{\sqn{x^{k+1} - x^*}}
		+
		\frac{2(1-\tau_2/2)}{\tau_2}\bb_F(x_f^k,x^*)
		\\&-
		\frac{2}{\tau_2}\E{\bb_F(x_f^{k+1},x^*)}
		+
		2\E{\< y^{k+1} - y^*,x^{k+1} - x^*>}
		-
		\bb_F(x_g^k,x^*)
		\\&+\frac{\oL}{Lb}\left(\frac{1}{\tau_1}-1\right)\left(\bb_F(\omega^k, x^*) - \bb_F(x_g^k, x^*) - \<\g F(x_g^k) - \g F(x^*) -\nu x_g^k + \nu x^*, \omega^k - x_g^k>\right)
		\\&+
		\frac{2\tau_0}{\tau_1}\<\left(\g F(x_g^k) - \nu x_g^k\right) - \left(\g F(x^*) - \nu x^*\right),\omega^k - x_g^k>
		\\&=
		\frac{1}{\eta}\sqn{x^k - x^*}
		-
		\alpha\E{\sqn{x^{k+1} - x^*}}
		+
		\frac{2(1-\tau_2/2)}{\tau_2}\bb_F(x_f^k,x^*)
		-
		\frac{2}{\tau_2}\E{\bb_F(x_f^{k+1},x^*)}
		\\&+
		2\E{\< y^{k+1} - y^*,x^{k+1} - x^*>}
        +\frac{\oL}{Lb}\left(\frac{1}{\tau_1} - 1\right)\left(\bb_F(\omega^k,x^*)-\bb_F(x_g^k,x^*)\right)
        -
        \bb_F(x_g^k,x^*)
		\\&+
		\frac{\oL}{Lb}\<\g F(x_g^k) - \g F(x^*) -\nu x_g^k + \nu x^*, \omega^k - x_g^k>.
	\end{align*}
	After rearranging and using $\Psi_x^k $ definition \eqref{scary:Psi_x} we get
	\begin{align*}
		\E{\Psi_x^{k+1}}
		&\leq
		\max\left\{1 - \tau_2/4, 1/(1+\eta\alpha)\right\}\Psi_x^k
		+
		2\E{\< y^{k+1} - y^*,x^{k+1} - x^*>}
		\\&+
        \frac{\oL}{Lb}\left(\frac{1}{\tau_1} - 1\right)\left(\bb_F(\omega^k,x^*)-\bb_F(x_g^k,x^*)\right)
        -
        \bb_F(x_g^k,x^*)
        -
        \frac{1}{2}\bb_F(x_f^k,x^*)
		\\&+
		\frac{\oL}{Lb}\<\g F(x_g^k) - \g F(x^*) -\nu x_g^k + \nu x^*, \omega^k - x_g^k>
		\\&\leq 
		\left(1 - \frac{1}{20}\min\left\{\sqrt{\frac{\mu}{L}}, b\sqrt{\frac{\mu}{nL}}\right\}\right)\Psi_x^k
		+
		2\E{\< y^{k+1} - y^*,x^{k+1} - x^*>}
		\\&+
        \frac{\oL}{Lb}\left(\frac{1}{\tau_1} - 1\right)\left(\bb_F(\omega^k,x^*)-\bb_F(x_g^k,x^*)\right)
        -
        \bb_F(x_g^k,x^*)
        -
        \frac{1}{2}\bb_F(x_f^k,x^*)
		\\&+
		\frac{\oL}{Lb}\<\g F(x_g^k) - \g F(x^*) -\nu x_g^k + \nu x^*, \omega^k - x_g^k>.
	\end{align*}
	
	The last inequality follows from $\eta$, $\alpha$, $\tau_0$, $\tau_1$, $\tau_2$ definitions \eqref{scary:eta}, \eqref{scary:alpha}, \eqref{scary:tau0}, \eqref{scary:tau1} and \eqref{scary:tau2}. Estimating the second term:
	\begin{align*}
		\frac{1}{1+\eta\alpha}
		&\leq
		1 - \frac{\eta\alpha}{2}
		\leq
		1 - \frac{\mu}{4}\left(L\left(\tau_2+\frac{2\tau_1}{1-\tau_1}\right)\right)^{-1}
		\leq
		1 - \frac{\mu}{4}\left(L\left(\tau_2+\frac{2\tau_2}{1-\tau_2}\right)\right)^{-1}
		\\&\leq
		1 - \frac{\mu}{4}\left(L\left(\tau_2+4\tau_2\right)\right)^{-1}
		=
		1 - \frac{\mu}{20L\tau_2}
		\leq
    	1 - \frac{1}{20\max\left\{1, \frac{\sqrt{n}}{b}\right\}}\sqrt{\frac{\mu}{L}}
        \leq
        1 - \frac{1}{20}\min\left\{\sqrt{\frac{\mu}{L}}, b\sqrt{\frac{\mu}{nL}}\right\}.
	\end{align*}
    Estimating the first term:
    \begin{align*}
        1-\tau_2/4 \leq 1-\min\left\{\frac{1}{8}, \frac{1}{4}\sqrt{\frac{\mu}{L}}\right\}.
    \end{align*}
\end{proof}

\begin{lemma}
	The following inequality holds:
	\begin{equation}
		\begin{split}\label{scary:eq:1}
			-\sqn{y^{k+1}- y^*}
			&\leq 
			\frac{(1-\sigma_1)}{\sigma_1}\sqn{y_f^k - y^*}
			- \frac{1}{\sigma_2}\sqn{y_f^{k+1} - y^*}
			\\&-
			\left(\frac{1}{\sigma_1} - \frac{1}{\sigma_2}\right)\sqn{y_g^k - y^*}
			+
			\left(\sigma_2- \sigma_1\right)\sqn{y^{k+1} - y^k}.
		\end{split}
	\end{equation}
\end{lemma}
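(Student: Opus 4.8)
The plan is to reduce the claimed inequality \eqref{scary:eq:1} to a single application of Young's inequality, using only the two defining updates of the $y$-block: Line~\ref{scary:line:y:1} ($y_g^k=\sigma_1 y^k+(1-\sigma_1)y_f^k$) and Line~\ref{scary:line:y:3} ($y_f^{k+1}=y_g^k+\sigma_2(y^{k+1}-y^k)$). Write $u^k\eqdef y^{k+1}-y^k$, so that Line~\ref{scary:line:y:3} reads $y_f^{k+1}-y^*=(y_g^k-y^*)+\sigma_2 u^k$, and similarly $y^{k+1}-y^*=(y^k-y^*)+u^k$.

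\textbf{Step 1 (expand the $y_f^{k+1}$ term).} I would first substitute the relation above into $-\tfrac1{\sigma_2}\sqn{y_f^{k+1}-y^*}$ on the right-hand side of \eqref{scary:eq:1}, obtaining $-\tfrac1{\sigma_2}\sqn{y_g^k-y^*}-2\<y_g^k-y^*,u^k>-\sigma_2\sqn{u^k}$. After this, the $\sqn{y_g^k-y^*}$ coefficients on the right collapse to $-\tfrac1{\sigma_1}$ and the $\sqn{u^k}$ coefficients to $-\sigma_1$, so the right-hand side of \eqref{scary:eq:1} becomes
\[
\tfrac{1-\sigma_1}{\sigma_1}\sqn{y_f^k-y^*}-\tfrac1{\sigma_1}\sqn{y_g^k-y^*}-2\<y_g^k-y^*,u^k>-\sigma_1\sqn{u^k}.
\]

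\textbf{Step 2 (eliminate $y_g^k$).} Next I would use the elementary identity associated with Line~\ref{scary:line:y:1}, namely $\sqn{y_g^k-y^*}=\sigma_1\sqn{y^k-y^*}+(1-\sigma_1)\sqn{y_f^k-y^*}-\sigma_1(1-\sigma_1)\sqn{y^k-y_f^k}$. Substituting it, the $\sqn{y_f^k-y^*}$ contributions cancel, and the right-hand side reduces to $-\sqn{y^k-y^*}+(1-\sigma_1)\sqn{y^k-y_f^k}-2\<y_g^k-y^*,u^k>-\sigma_1\sqn{u^k}$. On the left, $-\sqn{y^{k+1}-y^*}=-\sqn{y^k-y^*}-2\<y^k-y^*,u^k>-\sqn{u^k}$. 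Cancelling the common $-\sqn{y^k-y^*}$ and rearranging, \eqref{scary:eq:1} becomes equivalent to $2\<y_g^k-y^k,u^k>\le(1-\sigma_1)\bigl(\sqn{y^k-y_f^k}+\sqn{u^k}\bigr)$.

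\textbf{Step 3 (Young's inequality).} Finally, Line~\ref{scary:line:y:1} gives $y_g^k-y^k=(1-\sigma_1)(y_f^k-y^k)$, so the last display is precisely $(1-\sigma_1)$ times the Young/Cauchy--Schwarz inequality $2\<y_f^k-y^k,u^k>\le\sqn{y_f^k-y^k}+\sqn{u^k}$, which holds because $1-\sigma_1\ge0$ for the chosen parameters. There is no genuine obstacle here; the only points needing care are bookkeeping the cancellations of the $\sqn{y^k-y^*}$ and $\sqn{y_f^k-y^*}$ terms and observing that the residual inequality is a \emph{nonnegative} multiple of Young's inequality — so only $\sigma_1\le1$ is used, not any finer property of $\sigma_1,\sigma_2$.
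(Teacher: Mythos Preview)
Your proof is correct. Each step checks out: after Step~1 the coefficients of $\sqn{y_g^k-y^*}$ and $\sqn{u^k}$ collapse exactly as you say; Step~2 uses the standard variance identity for the convex combination $y_g^k=\sigma_1 y^k+(1-\sigma_1)y_f^k$ correctly; and Step~3 reduces the claim to $(1-\sigma_1)$ times the elementary inequality $2\<a,b>\le\sqn{a}+\sqn{b}$, which is valid since $\sigma_1\le1$.

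The paper's argument is organized differently. It first combines Lines~\ref{scary:line:y:1} and~\ref{scary:line:y:3} into the single affine identity
\[
\tfrac{\sigma_1}{\sigma_2}(y_f^{k+1}-y^*)+\bigl(1-\tfrac{\sigma_1}{\sigma_2}\bigr)(y_g^k-y^*)=\sigma_1(y^{k+1}-y^*)+(1-\sigma_1)(y_f^k-y^*),
\]
notices that each side is a convex combination, squares, applies the exact squared-norm identity on the left and Jensen on the right, and then rearranges. Your direct expansion and the paper's ``square a convex identity'' trick are really the same proof in disguise: the single inequality step in each is equivalent (your Young inequality has gap $\sqn{y_f^k-y^{k+1}}$, the paper's Jensen step has gap $\sigma_1(1-\sigma_1)\sqn{y^{k+1}-y_f^k}$). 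The paper's route is perhaps more structural---it makes clear \emph{why} these particular coefficients appear---while yours is more transparent about the residual being a nonnegative multiple of a single Young inequality and needing only $\sigma_1\le1$.
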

\begin{proof}
	Lines~\ref{scary:line:y:1} and~\ref{scary:line:y:3} of Algorithm~\ref{scary:alg} imply
	\begin{align*}
		y_f^{k+1} &= y_g^k + \sigma_2(y^{k+1} - y^k)\\&=
		y_g^k + \sigma_2 y^{k+1} - \frac{\sigma_2}{\sigma_1}\left(y_g^k - (1-\sigma_1)y_f^k\right)
		\\&=
		\left(1 - \frac{\sigma_2}{\sigma_1}\right)y_g^k + \sigma_2 y^{k+1} + \left(\frac{\sigma_2}{\sigma_1}- \sigma_2\right) y_f^k.
	\end{align*}
	After subtracting $y^*$ and rearranging we get
	\begin{align*}
		(y_f^{k+1}- y^*)+ \left(\frac{\sigma_2}{\sigma_1} - 1\right) (y_g^k - y^*)
		=
		\sigma_2( y^{k+1} - y^*)+ \left(\frac{\sigma_2}{\sigma_1} - \sigma_2\right)(y_f^k - y^*).
	\end{align*}
	Multiplying both sides by $\frac{\sigma_1}{\sigma_2}$ gives
	\begin{align*}
		\frac{\sigma_1}{\sigma_2}(y_f^{k+1}- y^*)+ \left(1-\frac{\sigma_1}{\sigma_2}\right) (y_g^k - y^*)
		=
		\sigma_1( y^{k+1} - y^*)+ \left(1 - \sigma_1\right)(y_f^k - y^*).
	\end{align*}
	Squaring both sides gives
	\begin{align*}
		\frac{\sigma_1}{\sigma_2}\sqn{y_f^{k+1} - y^*} + \left(1- \frac{\sigma_1}{\sigma_2}\right)\sqn{y_g^k - y^*} - \frac{\sigma_1}{\sigma_2}\left(1-\frac{\sigma_1}{\sigma_2}\right)\sqn{y_f^{k+1} - y_g^k}
		\leq
		\sigma_1\sqn{y^{k+1} - y^*} + (1-\sigma_1)\sqn{y_f^k - y^*}.
	\end{align*}
	Rearranging gives
	\begin{align*}
		-\sqn{y^{k+1}- y^*} \leq -\left(\frac{1}{\sigma_1} - \frac{1}{\sigma_2}\right)\sqn{y_g^k - y^*} + \frac{(1-\sigma_1)}{\sigma_1}\sqn{y_f^k - y^*} - \frac{1}{\sigma_2}\sqn{y_f^{k+1} - y^*}+
		\frac{1}{\sigma_2}\left(1 - \frac{\sigma_1}{\sigma_2}\right)\sqn{y_f^{k+1} - y_g^k}.
	\end{align*}
	Using Line~\ref{scary:line:y:3} of Algorithm~\ref{scary:alg} we get
	\begin{align*}
		-\sqn{y^{k+1}- y^*} \leq -\left(\frac{1}{\sigma_1} - \frac{1}{\sigma_2}\right)\sqn{y_g^k - y^*} + \frac{(1-\sigma_1)}{\sigma_1}\sqn{y_f^k - y^*} - \frac{1}{\sigma_2}\sqn{y_f^{k+1} - y^*}+
		\left(\sigma_2 - \sigma_1\right)\sqn{y^{k+1} - y^k}.
	\end{align*}
\end{proof}

\begin{lemma}
	Let $\beta$ be defined as follows:
	\begin{equation}\label{scary:beta}
		\beta = 1/(2L).
	\end{equation}
	Let $\sigma_1$ be defined as follows:
	\begin{equation}\label{scary:sigma1}
		\sigma_1 = (1/\sigma_2 + 1/2)^{-1}.
	\end{equation}
	Then the following inequality holds:
	\begin{equation}
		\begin{split}\label{scary:eq:y}
			\MoveEqLeft[4]
			\left(\frac{1}{\theta} + \frac{\beta}{2}\right)\E{\sqn{y^{k+1} - y^*}}
			+
			\frac{\beta}{2\sigma_2}\E{\sqn{y_f^{k+1} - y^*}}\\
			&\leq
			\frac{1}{\theta}\sqn{y^k - y^*}
			+
			\frac{\beta(1-\sigma_2/2)}{2\sigma_2}\sqn{y_f^k - y^*}
			-
			2\E{\<x^{k+1} - x^*, y^{k+1} - y^*>}
            +
            \bb_F(x_g^k, x^*)
			\\&-
			2\nu^{-1}\E{\<y_g^k + z_g^k - (y^* + z^*), y^{k+1} - y^*>}
			-
			\frac{\beta}{4}\sqn{y_g^k - y^*}
			+
			\left(\frac{\beta\sigma_2^2}{4} - \frac{1}{\theta}\right)\E{\sqn{y^{k+1} - y^k}}
			\\&+
			\frac{\oL}{Lb}\left(\bb_F(\omega^k, x^*) - \bb_F(x_g^k, x^*) - \<\g F(x_g^k) - \g F(x^*) -\nu x_g^k + \nu x^*, \omega^k - x_g^k>\right).
		\end{split}
	\end{equation}
\end{lemma}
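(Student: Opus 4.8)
I would establish \eqref{scary:eq:y} by the same route used for the $x$-block bound \eqref{scary:eq:x}, now applied to the dual iterates. Starting from the identity $\frac{1}{\theta}\sqn{y^{k+1}-y^*}=\frac{1}{\theta}\sqn{y^k-y^*}+\frac{2}{\theta}\<y^{k+1}-y^k,y^{k+1}-y^*>-\frac{1}{\theta}\sqn{y^{k+1}-y^k}$, I would substitute Line~\ref{scary:line:y:2} of Algorithm~\ref{scary:alg} for $y^{k+1}-y^k$; the cross term then splits into the ``gradient'' part $2\beta\<\nabla^k-\nu x_g^k-y^{k+1},y^{k+1}-y^*>$ and the ``constraint'' part $-2\<\nu^{-1}(y_g^k+z_g^k)+x^{k+1},y^{k+1}-y^*>$. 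The constraint part is centered using the optimality condition \eqref{opt:y} (which gives $x^*=-\nu^{-1}(y^*+z^*)$), producing precisely the two cross-terms $-2\nu^{-1}\<y_g^k+z_g^k-(y^*+z^*),y^{k+1}-y^*>$ and $-2\<x^{k+1}-x^*,y^{k+1}-y^*>$ of the claimed bound, with no remainder.

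\textbf{The gradient part.} Here I would use \eqref{opt:x}, i.e.\ $y^*=\g F(x^*)-\nu x^*$, to write $\nabla^k-\nu x_g^k-y^{k+1}=\big(\nabla^k-\g F(x_g^k)\big)+\g\bb_F(x_g^k,x^*)-\big(y^{k+1}-y^*\big)$, where $\g\bb_F(x_g^k,x^*)=\g F(x_g^k)-\g F(x^*)-\nu x_g^k+\nu x^*$. The last summand contributes $-2\beta\sqn{y^{k+1}-y^*}$, which is retained. The drift summand $2\beta\<\g\bb_F(x_g^k,x^*),y^{k+1}-y^*>$ is bounded by Young's inequality, using that $\bb_F(\cdot,x^*)$ is $(L-\nu)$-smooth and (since $\nu=\mu/2<\mu$ by \eqref{scary:nu}) $(\mu-\nu)$-strongly convex with minimizer $x^*$, so $\sqn{\g\bb_F(x_g^k,x^*)}\le 2(L-\nu)\bb_F(x_g^k,x^*)$; the splitting weight is chosen so that $\bb_F(x_g^k,x^*)$ appears with coefficient exactly $1$ — this is what will cancel the $-\bb_F(x_g^k,x^*)$ of \eqref{scary:eq:x} once the $x$- and $y$-potentials are combined. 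The noise summand is handled after taking the conditional expectation at step $k$: since $\E{\nabla^k}=\g F(x_g^k)$ and $y^k-y^*$ does not depend on the minibatch drawn at step $k$, one has $\E{\<\nabla^k-\g F(x_g^k),y^{k+1}-y^*>}=\E{\<\nabla^k-\g F(x_g^k),y^{k+1}-y^k>}$; applying Cauchy--Schwarz against $\sqn{y^{k+1}-y^k}$ with an appropriate weight and then the variance estimate of Lemma~\ref{eq:grad_upper} produces exactly $\frac{\oL}{Lb}\big(\bb_F(\omega^k,x^*)-\bb_F(x_g^k,x^*)-\<\g F(x_g^k)-\g F(x^*)-\nu x_g^k+\nu x^*,\omega^k-x_g^k>\big)$.

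\textbf{Introducing $y_f,y_g$ and closing.} I would then add $\frac{\beta}{2}$ times the already-proven inequality \eqref{scary:eq:1} (the multiplier $\frac{\beta}{2}$ being forced by the target coefficient $-\frac{\beta}{4}$ of $\sqn{y_g^k-y^*}$ together with $\frac{1}{\sigma_1}-\frac{1}{\sigma_2}=\frac12$ from \eqref{scary:sigma1}), which expresses $-\sqn{y^{k+1}-y^*}$ through $\sqn{y_f^k-y^*}$, $\sqn{y_f^{k+1}-y^*}$, $\sqn{y_g^k-y^*}$ and $\sqn{y^{k+1}-y^k}$. Using \eqref{scary:sigma1} the coefficient of $\sqn{y_f^k-y^*}$ collapses to $\frac{\beta(1-\sigma_2/2)}{2\sigma_2}$, exactly as the choice of $\tau_1$ did in the $x$-block. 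Collecting all $\sqn{y^{k+1}-y^*}$ contributions into $\frac1\theta+\frac\beta2$ on the left, all $\sqn{y^{k+1}-y^k}$ contributions (from the identity, from the Cauchy--Schwarz step, and from \eqref{scary:eq:1}) into $\frac{\beta\sigma_2^2}{4}-\frac1\theta$, and substituting $\beta=1/(2L)$ from \eqref{scary:beta} then yields \eqref{scary:eq:y}. The main obstacle — essentially the only nontrivial point — is this final bookkeeping: the Young weights in the drift and noise steps and the multiplier of \eqref{scary:eq:1} must be chosen simultaneously so that the three target coefficients come out exactly, and so that $\bb_F(x_g^k,x^*)$ and the $\frac{\oL}{Lb}(\cdots)$ group appear with precisely the coefficients needed for the later cancellation against \eqref{scary:eq:x}; every analytic ingredient required (smoothness and strong convexity of $\bb_F$, co-coercivity, Lemma~\ref{eq:grad_upper}, and \eqref{scary:eq:1}) is already in place.
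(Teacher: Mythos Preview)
Your overall architecture matches the paper's, but the treatment of the noise summand has a genuine gap. After splitting $\nabla^k-\nu x_g^k-y^{k+1}$ into noise, drift, and $-(y^{k+1}-y^*)$ and centering, you propose to handle $2\beta\,\E{\<\nabla^k-\g F(x_g^k),\,y^{k+1}-y^*>}$ by first using $\E{\<\nabla^k-\g F(x_g^k),\,y^k-y^*>}=0$ and then Cauchy--Schwarz against $\sqn{y^{k+1}-y^k}$. To land the variance coefficient at exactly $\beta=\tfrac{1}{2L}$ (so that Lemma~\ref{eq:grad_upper} yields the required $\tfrac{\oL}{Lb}(\cdots)$), the Young/Cauchy--Schwarz weight is forced, and this step unavoidably contributes an additional $+\beta\,\E{\sqn{y^{k+1}-y^k}}$. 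Summing with the $-\tfrac{1}{\theta}$ from the polarization identity and the $\tfrac{\beta(\sigma_2-\sigma_1)}{2}\le\tfrac{\beta\sigma_2^2}{4}$ from $\tfrac{\beta}{2}\times$\eqref{scary:eq:1} gives $\tfrac{\beta\sigma_2^2}{4}+\beta-\tfrac{1}{\theta}$, not the claimed $\tfrac{\beta\sigma_2^2}{4}-\tfrac{1}{\theta}$. Since the bracketed variance expression from Lemma~\ref{eq:grad_upper} is nonnegative (it is the Bregman divergence of the convex function $\bb_F(\cdot,x^*)$), there is no slack on that side to trade off either; the extra $+\beta\,\E{\sqn{y^{k+1}-y^k}}$ cannot be absorbed and the stated inequality does not follow.

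The paper sidesteps this entirely by \emph{not} separating noise and drift before applying Young's inequality: it bounds $2\beta\<\nabla^k-\nu x_g^k-(\g F(x^*)-\nu x^*),\,y^{k+1}-y^*>$ in one shot by $\beta\sqn{\nabla^k-\nu x_g^k-(\g F(x^*)-\nu x^*)}+\beta\sqn{y^{k+1}-y^*}$, then takes expectation and uses the bias--variance identity $\E{\sqn{\xi}}=\sqn{\E{\xi}}+\E{\sqn{\xi-\E{\xi}}}$ on the first term. This produces $\beta\sqn{\g\bb_F(x_g^k,x^*)}+\beta\,\E{\sqn{\nabla^k-\g F(x_g^k)}}$ with no $\sqn{y^{k+1}-y^k}$ cost whatsoever; co-coercivity of $\bb_F$ and Lemma~\ref{eq:grad_upper} then give exactly $\bb_F(x_g^k,x^*)$ and $\tfrac{\oL}{Lb}(\cdots)$. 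Replace your noise step by this device and the rest of your plan (use of \eqref{opt:x}, \eqref{opt:y}, the multiplier $\tfrac{\beta}{2}$ on \eqref{scary:eq:1}, and the $\sigma_1$ simplification) goes through verbatim.
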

\begin{proof}
	\begin{align*}
		\frac{1}{\theta}\sqn{y^{k+1} - y^*}
		&=
		\frac{1}{\theta}\sqn{y^k - y^*} + \frac{2}{\theta}\<x^{k+1} - x^k , x^{k+1} - x^*> - \frac{1}{\theta}\sqn{y^{k+1} - y^k}. 
	\end{align*}
	Using Line~\ref{scary:line:y:2} of Algorithm~\ref{scary:alg} we get
	\begin{align*}
		\frac{1}{\theta}\sqn{y^{k+1} - y^*}
		&=
		\frac{1}{\theta}\sqn{y^k - y^*}
		+
		2\beta\<\nabla^k - \nu x_g^k - y^{k+1}, y^{k+1} - y^*>
		\\&-
		2\<\nu^{-1}(y_g^k + z_g^k) + x^{k+1}, y^{k+1} - y^*>
		-
		\frac{1}{\theta}\sqn{y^{k+1} - y^k}.
	\end{align*}
	Using optimality condition \eqref{opt:x} we get
	\begin{align*}
		\frac{1}{\theta}\sqn{y^{k+1} - y^*}
		&=
		\frac{1}{\theta}\sqn{y^k - y^*}
		+
		2\beta\<\nabla^k - \nu x_g^k - (\g F(x^*) - \nu x^*) + y^*- y^{k+1}, y^{k+1} - y^*>
		\\&-
		2\<\nu^{-1}(y_g^k + z_g^k) + x^{k+1}, y^{k+1} - y^*>
		-
		\frac{1}{\theta}\sqn{y^{k+1} - y^k}
		\\&=
		\frac{1}{\theta}\sqn{y^k - y^*}
		+
		2\beta\<\nabla^k - \nu x_g^k - (\g F(x^*) - \nu x^*), y^{k+1} - y^*>
		-
		2\beta\sqn{y^{k+1} - y^*}
		\\&-
		2\<\nu^{-1}(y_g^k + z_g^k) + x^{k+1}, y^{k+1} - y^*>
		-
		\frac{1}{\theta}\sqn{y^{k+1} - y^k}
		\\&\leq
		\frac{1}{\theta}\sqn{y^k - y^*}
		+
		\beta\sqn{\nabla^k - \nu x_g^k - (\g F(x^*) - \nu x^*)}
		-
		\beta\sqn{y^{k+1} - y^*}
		\\&-
		2\<\nu^{-1}(y_g^k + z_g^k) + x^{k+1}, y^{k+1} - y^*>
		-
		\frac{1}{\theta}\sqn{y^{k+1} - y^k}.
	\end{align*}
	Taking expectation over $i$ and using the property $\E{\sqn{\xi}}=\E{\sqn{\xi-\E{\xi}}}+\sqn{\E{\xi}}$ we get
	\begin{align*}
		\frac{1}{\theta}\E{\sqn{y^{k+1} - y^*}}
		&\leq
		\frac{1}{\theta}\sqn{y^k - y^*}
		+
		\beta\sqn{\g F(x_g^k) - \nu x_g^k - (\g F(x^*) - \nu x^*)}
		-
		\beta\sqn{y^{k+1} - y^*}
		\\&-
		2\<\nu^{-1}(y_g^k + z_g^k) + x^{k+1}, y^{k+1} - y^*>
		-
		\frac{1}{\theta}\sqn{y^{k+1} - y^k}
		+
		\beta\E{\sqn{\nabla^k - \g F(x_g^k)}}.
	\end{align*}
	Function $F(x) - \frac{\nu}{2}\sqn{x}$ is convex and $L$-smooth, together with \eqref{eq:grad_upper} it implies
	\begin{align*}
		\frac{1}{\theta}\sqn{y^{k+1} - y^*}
		&\leq
		\frac{1}{\theta}\sqn{y^k - y^*}
		+
		2\beta L\left(\bg_F(x_g^k, x^*) - \frac{\nu}{2}\sqn{x_g^k - x^*}\right)
		-
		\beta\E{\sqn{y^{k+1} - y^*}}
		\\&-
		2\E{\<\nu^{-1}(y_g^k + z_g^k) + x^{k+1}, y^{k+1} - y^*>}
		-
		\frac{1}{\theta}\E{\sqn{y^{k+1} - y^k}}
		\\&+
		\frac{2\oL\beta}{b}\left(\bb_F(\omega^k, x^*) - \bb_F(x_g^k, x^*) - \<\g F(x_g^k) - \g F(x^*) -\nu x_g^k + \nu x^*, \omega^k - x_g^k>\right).
	\end{align*}
	Using $\beta$ definition \eqref{scary:beta} we get
	\begin{align*}
		\frac{1}{\theta}\E{\sqn{y^{k+1} - y^*}}
		&\leq
		\frac{1}{\theta}\sqn{y^k - y^*}
		+
		\bb_F(x_g^k,x^*)
		-
		\beta\E{\sqn{y^{k+1} - y^*}}
		\\&-
		2\E{\<\nu^{-1}(y_g^k + z_g^k) + x^{k+1}, y^{k+1} - y^*>}
		-
		\frac{1}{\theta}\E{\sqn{y^{k+1} - y^k}}
		\\&+
		\frac{\oL}{Lb}\left(\bb_F(\omega^k, x^*) - \bb_F(x_g^k, x^*) - \<\g F(x_g^k) - \g F(x^*) -\nu x_g^k + \nu x^*, \omega^k - x_g^k>\right).
	\end{align*}
	Using optimality condition \eqref{opt:y} we get
	\begin{align*}
		\frac{1}{\theta}\E{\sqn{y^{k+1} - y^*}}
		&\leq
		\frac{1}{\theta}\sqn{y^k - y^*}
		-
		\beta\E{\sqn{y^{k+1} - y^*}}
		-
		2\nu^{-1}\E{\<y_g^k + z_g^k - (y^* + z^*), y^{k+1} - y^*>}
		\\&-
		2\E{\<x^{k+1} - x^*, y^{k+1} - y^*>}
		-
		\frac{1}{\theta}\E{\sqn{y^{k+1} - y^k}}
        +
        \bb_F(x_g^k, x^*)
		\\&+
		\frac{\oL}{Lb}\left(\bb_F(\omega^k, x^*) - \bb_F(x_g^k, x^*) - \<\g F(x_g^k) - \g F(x^*) -\nu x_g^k + \nu x^*, \omega^k - x_g^k>\right).
	\end{align*}
	Using \eqref{scary:eq:1} together with $\sigma_1$ definition \eqref{scary:sigma1} we get
	\begin{align*}
		\frac{1}{\theta}\E{\sqn{y^{k+1} - y^*}}
		&\leq
		\frac{1}{\theta}\sqn{y^k - y^*}
		-
		\frac{\beta}{2}\E{\sqn{y^{k+1} - y^*}}
		+
		\frac{\beta(1-\sigma_2/2)}{2\sigma_2}\sqn{y_f^k - y^*}
		\\&-
		\frac{\beta}{2\sigma_2}\E{\sqn{y_f^{k+1} - y^*}}
		-
		\frac{\beta}{4}\sqn{y_g^k - y^*}
		+
		\frac{\beta\left(\sigma_2- \sigma_1\right)}{2}\E{\sqn{y^{k+1} - y^k}}
        +
        \bb_F(x_g^k, x^*)
		\\&-
		2\nu^{-1}\E{\<y_g^k + z_g^k - (y^* + z^*), y^{k+1} - y^*>}
		-
		2\E{\<x^{k+1} - x^*, y^{k+1} - y^*>}
		-
		\frac{1}{\theta}\E{\sqn{y^{k+1} - y^k}}
		\\&+
		\frac{\oL}{Lb}\left(\bb_F(\omega^k, x^*) - \bb_F(x_g^k, x^*) - \<\g F(x_g^k) - \g F(x^*) -\nu x_g^k + \nu x^*, \omega^k - x_g^k>\right).
		\\&\leq
		\frac{1}{\theta}\sqn{y^k - y^*}
		-
		\frac{\beta}{2}\E{\sqn{y^{k+1} - y^*}}
		+
		\frac{\beta(1-\sigma_2/2)}{2\sigma_2}\sqn{y_f^k - y^*}
		\\&-
		\frac{\beta}{2\sigma_2}\E{\sqn{y_f^{k+1} - y^*}}
		-
		\frac{\beta}{4}\sqn{y_g^k - y^*}
		+
		\left(\frac{\beta\sigma_2^2}{4} - \frac{1}{\theta}\right)\E{\sqn{y^{k+1} - y^k}}
        +
        \bb_F(x_g^k, x^*)
		\\&-
		2\nu^{-1}\E{\<y_g^k + z_g^k - (y^* + z^*), y^{k+1} - y^*>}
		-
		2\E{\<x^{k+1} - x^*, y^{k+1} - y^*>}
		\\&+
		\frac{\oL}{Lb}\left(\bb_F(\omega^k, x^*) - \bb_F(x_g^k, x^*) - \<\g F(x_g^k) - \g F(x^*) -\nu x_g^k + \nu x^*, \omega^k - x_g^k>\right).
	\end{align*}
	Rearranging gives 
	\begin{align*}
		\MoveEqLeft[4]
		\left(\frac{1}{\theta} + \frac{\beta}{2}\right)\E{\sqn{y^{k+1} - y^*}}
		+
		\frac{\beta}{2\sigma_2}\E{\sqn{y_f^{k+1} - y^*}}\\
		&\leq
		\frac{1}{\theta}\sqn{y^k - y^*}
		+
		\frac{\beta(1-\sigma_2/2)}{2\sigma_2}\sqn{y_f^k - y^*}
		-
		2\E{\<x^{k+1} - x^*, y^{k+1} - y^*>}
        +
        \bb_F(x_g^k, x^*)
		\\&-
		2\nu^{-1}\E{\<y_g^k + z_g^k - (y^* + z^*), y^{k+1} - y^*>}
		-
		\frac{\beta}{4}\sqn{y_g^k - y^*}
		+
		\left(\frac{\beta\sigma_2^2}{4} - \frac{1}{\theta}\right)\E{\sqn{y^{k+1} - y^k}}
		\\&+
		\frac{\oL}{Lb}\left(\bb_F(\omega^k, x^*) - \bb_F(x_g^k, x^*) - \<\g F(x_g^k) - \g F(x^*) -\nu x_g^k + \nu x^*, \omega^k - x_g^k>\right).
	\end{align*}
\end{proof}
\begin{lemma}
	The following inequality holds:
	\begin{equation}\label{scary:eq:2}
		\sqn{m^k}_\mP
		\leq
		8\chi^2\gamma^2\nu^{-2}\sqn{y_g^k + z_g^k}_\mP + 4\chi(1 - (4\chi)^{-1})\sqn{m^k}_\mP - 4\chi\sqn{m^{k+1}}_\mP.
	\end{equation}
\end{lemma}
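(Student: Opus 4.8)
The plan is to reduce \eqref{scary:eq:2} to a single-step contraction for $m^k$. First I would rewrite Line~\ref{scary:line:m} of Algorithm~\ref{scary:alg} as one gossip step,
\begin{equation*}
    m^{k+1} = \left((\mI_m - \mW(k))\otimes\mI_d\right)a^k, \qquad a^k \eqdef \gamma\nu^{-1}(y_g^k + z_g^k) + m^k ,
\end{equation*}
and observe that $4\chi(1-(4\chi)^{-1}) = 4\chi - 1$, so that \eqref{scary:eq:2}, after multiplying by $4\chi$ and rearranging, is equivalent to the contraction
\begin{equation*}
    \sqn{m^{k+1}}_\mP \leq \left(1 - \tfrac{1}{2\chi}\right)\sqn{m^k}_\mP + 2\chi\gamma^2\nu^{-2}\sqn{y_g^k + z_g^k}_\mP .
\end{equation*}
Hence it suffices to prove this inequality.

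To prove it, I would apply the projector $\mP$ onto $\cL^\perp$ (the matrix defining the weighted norm $\sqn{\cdot}_\mP$) to the recursion. By Assumption~\ref{assum:gossip_matrix_sequence} the operator $(\mI_m - \mW(k))\otimes\mI_d$ maps $\cL^\perp$ into itself and acts as the identity on $\cL$, so it commutes with $\mP$; consequently $\mP m^{k+1} = \left((\mI_m-\mW(k))\otimes\mI_d\right)\mP a^k$ with $\mP a^k \in \cL^\perp$, and the contraction property \eqref{eq:chi} yields $\sqn{m^{k+1}}_\mP \leq (1-\chi^{-1})\sqn{a^k}_\mP$. I would then split $a^k$ via Young's inequality with weight $\tfrac{1}{2\chi}$,
\begin{equation*}
    \sqn{a^k}_\mP \leq \left(1 + \tfrac{1}{2\chi}\right)\sqn{m^k}_\mP + (1 + 2\chi)\gamma^2\nu^{-2}\sqn{y_g^k + z_g^k}_\mP ,
\end{equation*}
and close the argument using $\chi \geq 1$, which makes $(1-\chi^{-1})(1+\tfrac{1}{2\chi}) = 1 - \tfrac{1}{2\chi} - \tfrac{1}{2\chi^2} \leq 1-\tfrac{1}{2\chi}$ and $(1-\chi^{-1})(1+2\chi) = 2\chi - 1 - \chi^{-1} \leq 2\chi$.

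I do not expect a genuine obstacle. The two points that need care are (i) verifying that $\mP$ commutes with the gossip operator before invoking \eqref{eq:chi} — this is exactly why the analysis carries $m^k$ inside the weighted norm $\sqn{\cdot}_\mP$ rather than $\sqn{\cdot}$, since $m^k$ is never explicitly projected in Algorithm~\ref{scary:alg} and may have a nonzero component in $\cL$ — and (ii) tuning the Young parameter so that the constants reproduce the $\tfrac{1}{2\chi}$ and $2\chi$ appearing in \eqref{scary:eq:2}.
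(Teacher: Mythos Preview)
Your proposal is correct and follows essentially the same route as the paper: both reduce \eqref{scary:eq:2} to the one-step contraction $\sqn{m^{k+1}}_\mP \leq (1-\tfrac{1}{2\chi})\sqn{m^k}_\mP + 2\chi\gamma^2\nu^{-2}\sqn{y_g^k+z_g^k}_\mP$, obtain $\sqn{m^{k+1}}_\mP \leq (1-\chi^{-1})\sqn{a^k}_\mP$ from \eqref{eq:chi} after projecting, and then split $a^k$ via Young's inequality. The only cosmetic difference is your Young parameter $c=\tfrac{1}{2\chi}$ versus the paper's $c=\tfrac{1}{2(\chi-1)}$, which gives the contraction factor exactly rather than after one more inequality; both choices work.
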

\begin{proof}
	Using Line~\ref{scary:line:m} of Algorithm~\ref{scary:alg} we get
	\begin{align*}
		\sqn{m^{k+1}}_\mP
		&=
		\sqn{\gamma\nu^{-1}(y_g^k+z_g^k) + m^k - (\mW(k)\otimes \mI_d)\left[\gamma\nu^{-1}(y_g^k+z_g^k) + m^k\right]}_{\mP}\\
		&=
		\sqn{\mP\left[\gamma\nu^{-1}(y_g^k+z_g^k) + m^k\right]- (\mW(k)\otimes \mI_d)\mP\left[\gamma\nu^{-1}(y_g^k+z_g^k) + m^k\right]}.
	\end{align*}
	Using  property \eqref{eq:chi} we obtain
	\begin{align*}
		\sqn{m^{k+1}}_\mP
		&\leq (1 - \chi^{-1})
		\sqn{m^k + \gamma\nu^{-1}(y_g^k + z_g^k)}_\mP.
	\end{align*}
	Using inequality $\sqn{a+b} \leq (1+c)\sqn{a} + (1+c^{-1})\sqn{b}$ with $c = \frac{1}{2(\chi - 1)}$ we get
	\begin{align*}
		\sqn{m^{k+1}}_\mP
		&\leq (1 - \chi^{-1})
		\left[
		\left(1 + \frac{1}{2(\chi - 1)}\right)\sqn{m^k}_\mP
		+ 
		\left(1 +  2(\chi - 1)\right)\gamma^2\nu^{-2}\sqn{y_g^k + z_g^k}_\mP
		\right]
		\\&\leq
		(1 - (2\chi)^{-1})\sqn{m^k}_\mP
		+
		2\chi\gamma^2\nu^{-2}\sqn{y_g^k + z_g^k}_\mP.
	\end{align*}
	Rearranging gives
	\begin{align*}
		\sqn{m^k}_\mP
		&\leq
		8\chi^2\gamma^2\nu^{-2}\sqn{y_g^k + z_g^k}_\mP + 4\chi(1 - (4\chi)^{-1})\sqn{m^k}_\mP - 4\chi\sqn{m^{k+1}}_\mP.
	\end{align*}
\end{proof}

\begin{lemma}
	Let $\z^k$ be defined as follows:
	\begin{equation}\label{scary:zhat}
		\z^k = z^k - \mP m^k.
	\end{equation}
	Then the following inequality holds:
	\begin{equation}
		\begin{split}\label{scary:eq:z}
			\MoveEqLeft[4]\frac{1}{\gamma}\sqn{\z^{k+1} - z^*}
			+
			\frac{4}{3\gamma}\sqn{m^{k+1}}_\mP
			\leq
			\left(\frac{1}{\gamma} - \delta\right)\sqn{\z^k - z^*}
			+
			\left(1-(4\chi)^{-1} +\frac{3\gamma\delta}{2}\right)\frac{4}{3\gamma}\sqn{m^k}_\mP
			\\&-
			2\nu^{-1}\<y_g^k + z_g^k - (y^*+z^*),z^k - z^*>
			+
			\gamma\nu^{-2}\left(1 + 6\chi\right)\sqn{y_g^k+z_g^k}_\mP
			\\&+
			2\delta\sqn{z_g^k  - z^*}
			+
			\left(2\gamma\delta^2-\delta\right)\sqn{z_g^k - z^k}.
		\end{split}
	\end{equation}
\end{lemma}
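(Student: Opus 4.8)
The first thing I would do is record the invariant that $z^k, z_f^k, z_g^k \in \cL^\perp$ for every $k$. This follows by induction: $z^0 = z_f^0 \in \cL^\perp$ by the input of Algorithm~\ref{scary:alg}, and Lines~\ref{scary:line:z:1}, \ref{scary:line:z:2}, \ref{scary:line:z:3} preserve membership in $\cL^\perp$ since every occurrence of $\mW(k)\otimes\mI_d$ lands in $\cL^\perp$ by Assumption~\ref{assum:gossip_matrix_sequence}(3). Next, I would subtract Line~\ref{scary:line:m} from Line~\ref{scary:line:z:2}: the two copies of $(\mW(k)\otimes\mI_d)[\gamma\nu^{-1}(y_g^k+z_g^k)+m^k]$ cancel, leaving
\[
z^{k+1} - m^{k+1} = (z^k - m^k) + \gamma\delta(z_g^k - z^k) - \gamma\nu^{-1}(y_g^k+z_g^k).
\]
Applying the orthogonal projection $\mP$ onto $\cL^\perp$, using $\mP z^{k+1}=z^{k+1}$, $\mP z^k = z^k$, $\mP z_g^k = z_g^k$, and the definition \eqref{scary:zhat} of $\z^k$, gives the clean Euler-type recursion
\[
\z^{k+1} = \z^k + \gamma\delta(z_g^k - z^k) - \gamma\nu^{-1}\mP(y_g^k+z_g^k),
\]
in which the gossip matrix no longer appears; this is precisely why $\z$, rather than $z$, is the right quantity to propagate, and the $\sqn{m^k}_\mP$ terms will have to be carried as a separate potential.

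\textbf{Step 2: expand the squared distance.} I would rewrite the recursion as the perturbed convex combination $\z^{k+1}-z^* = (1-\gamma\delta)(\z^k - z^*) + \gamma\delta(z_g^k - z^*) - \gamma\delta\,\mP m^k - \gamma\nu^{-1}\mP(y_g^k+z_g^k)$ and expand $\tfrac1\gamma\sqn{\z^{k+1}-z^*}$. The convex-combination part contributes $(1-\gamma\delta)\sqn{\z^k-z^*} + \gamma\delta\sqn{z_g^k-z^*} - \gamma\delta(1-\gamma\delta)\sqn{\z^k - z_g^k}$; dividing by $\gamma$ turns the first two into $(\tfrac1\gamma-\delta)\sqn{\z^k-z^*}$ (already the exact coefficient in \eqref{scary:eq:z}) and $\delta\sqn{z_g^k-z^*}$, while using $\z^k - z_g^k = (z^k - z_g^k) - \mP m^k$ together with $1-\gamma\delta\ge 1-2\gamma\delta$ turns the slack term into $(2\gamma\delta^2-\delta)\sqn{z_g^k-z^k}$ plus a residual inner product against $\mP m^k$. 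The remaining pieces are the cross term with the error, $-2\delta\<z^k-z_g^k,\mP m^k> - 2\nu^{-1}\<\z^k-z^*,\mP(y_g^k+z_g^k)>$, and the squared error $\sqn{\gamma\delta\,\mP m^k+\gamma\nu^{-1}\mP(y_g^k+z_g^k)}$. The crucial algebraic step here is the optimality condition \eqref{opt:z}, which puts $y^*+z^*$ in $\cL$ and hence orthogonal to $z^k - z^*\in\cL^\perp$, so that $\<\mP(y_g^k+z_g^k),z^k-z^*> = \<y_g^k+z_g^k-(y^*+z^*),z^k-z^*>$; combined with $\z^k-z^* = (z^k-z^*)-\mP m^k$ this converts the $-2\nu^{-1}$ cross term into exactly $-2\nu^{-1}\<y_g^k+z_g^k-(y^*+z^*),z^k-z^*>$ plus an inner product against $\mP m^k$.

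\textbf{Step 3: Young bounds and closing the $m$-block.} All leftover terms are either inner products of $\mP m^k$ against $z^k-z_g^k$, $\z^k-z^*$, $\mP(y_g^k+z_g^k)$, or the squared error above; I would bound them with $2\<a,b> \le \rho\sqn a+\rho^{-1}\sqn b$ and $\sqn{a+b}\le 2\sqn a+2\sqn b$, choosing the weights so that the $\delta\sqn{z_g^k-z^*}$ from Step~2 is upgraded to $2\delta$, the $\sqn{z_g^k-z^k}$ coefficient stays at $(2\gamma\delta^2-\delta)$, and all residual $\sqn{m^k}_\mP$ and $\sqn{y_g^k+z_g^k}_\mP$ pieces are collected. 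Finally I would add $\tfrac{4}{3\gamma}$ times the auxiliary estimate \eqref{scary:eq:2}: since its right-hand side carries $-4\chi\sqn{m^{k+1}}_\mP$, this is what places $\tfrac{4}{3\gamma}\sqn{m^{k+1}}_\mP$ on the left of \eqref{scary:eq:z}, replaces the $\sqn{m^k}_\mP$ coefficient by $\tfrac{4}{3\gamma}(1-(4\chi)^{-1})$ together with the Step-2/3 residue $2\delta = \tfrac{4}{3\gamma}\cdot\tfrac{3\gamma\delta}{2}$, and adds a $\chi\gamma\nu^{-2}$-sized multiple of $\sqn{y_g^k+z_g^k}_\mP$ that, together with the $\gamma\nu^{-2}$-sized residues from the error expansion, fits under the stated coefficient $\gamma\nu^{-2}(1+6\chi)$.

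\textbf{Where the difficulty lies.} The square expansion itself is routine. The delicate part is the bookkeeping of the $\mP m^k$ cross-terms together with the choice of Young weights, so that the four constants $(1-(4\chi)^{-1}+\tfrac{3\gamma\delta}{2})$, $(1+6\chi)$, $2\delta$ and $(2\gamma\delta^2-\delta)$ come out exactly and nothing is over-spent; in practice one must keep in mind the later parameter relations among $\gamma$, $\delta$ and $\chi$ (e.g.\ $\gamma\delta\le 1$ and $\delta$ comparable to $(\gamma\chi)^{-1}$) that make these particular constants the efficient ones.
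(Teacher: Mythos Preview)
Your Steps~1 and~2 are correct and capture exactly what the paper does: the clean recursion $\z^{k+1}=\z^k+\gamma\delta(z_g^k-z^k)-\gamma\nu^{-1}\mP(y_g^k+z_g^k)$ is obtained the same way, and the use of \eqref{opt:z} to strip the projection from the cross term is right. The only cosmetic difference is that the paper does not go through the convex-combination identity; it simply writes $\tfrac1\gamma\sqn{\z^{k+1}-z^*}=\tfrac1\gamma\sqn{\z^k-z^*}+\tfrac2\gamma\<\z^{k+1}-\z^k,\z^k-z^*>+\tfrac1\gamma\sqn{\z^{k+1}-\z^k}$ and then polarizes $2\delta\<z_g^k-z^k,\z^k-z^*>$ into $\delta\sqn{z_g^k-\mP m^k-z^*}-\delta\sqn{\z^k-z^*}-\delta\sqn{z_g^k-z^k}$, followed by $\delta\sqn{z_g^k-\mP m^k-z^*}\le 2\delta\sqn{z_g^k-z^*}+2\delta\sqn{m^k}_\mP$. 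This produces the constants $(\tfrac1\gamma-\delta)$, $2\delta$, $-\delta$ and $2\delta\sqn{m^k}_\mP$ in one shot, with fewer residual cross terms to juggle than your convex-combination route.

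Step~3 has a genuine arithmetic slip. You say you ``add $\tfrac{4}{3\gamma}$ times \eqref{scary:eq:2}'' and that this places $\tfrac{4}{3\gamma}\sqn{m^{k+1}}_\mP$ on the left, but the right-hand side of \eqref{scary:eq:2} carries $-4\chi\sqn{m^{k+1}}_\mP$, so your stated multiplier would give $\tfrac{16\chi}{3\gamma}\sqn{m^{k+1}}_\mP$ on the left and $\tfrac{16\chi}{3\gamma}(1-(4\chi)^{-1})$ on $\sqn{m^k}_\mP$; the bookkeeping cannot close with those numbers. What the paper actually does is first Young the cross term $2\nu^{-1}\<\mP(y_g^k+z_g^k),m^k>\le 3\gamma\chi\nu^{-2}\sqn{y_g^k+z_g^k}_\mP+\tfrac{1}{3\gamma\chi}\sqn{m^k}_\mP$, and then apply \eqref{scary:eq:2} with coefficient $\tfrac{1}{3\gamma\chi}$ to that last term. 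This is precisely what yields $-\tfrac{4}{3\gamma}\sqn{m^{k+1}}_\mP$, the $\tfrac{4}{3\gamma}(1-(4\chi)^{-1})$ piece on $\sqn{m^k}_\mP$ (to which the earlier $2\delta$ adds up to the stated coefficient), and contributes $\tfrac{8\chi\gamma}{3}\nu^{-2}\sqn{y_g^k+z_g^k}_\mP$, which together with the $3\gamma\chi\nu^{-2}$ from Young and the $\gamma\nu^{-2}$ from the squared increment stays under $\gamma\nu^{-2}(1+6\chi)$. So the correct picture is not ``add a fixed multiple of \eqref{scary:eq:2}'' but rather ``let the Young weight on the $m^k$ cross term dictate the multiple of \eqref{scary:eq:2}''; once you make that adjustment your outline matches the paper exactly.
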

\begin{proof}
	\begin{align*}
		\frac{1}{\gamma}\sqn{\z^{k+1} - z^*}
		&=
		\frac{1}{\gamma}\sqn{\z^k - z^*}
		+
		\frac{2}{\gamma}\<\z^{k+1} - \z^k,\z^k - z^*>
		+
		\frac{1}{\gamma}\sqn{\z^{k+1} - \z^k}.
	\end{align*}
	The combination of Lines~\ref{scary:line:z:2} and~\ref{scary:line:m} in Algorithm~\ref{scary:alg}, coupled with the definition of $\z^k$ in \eqref{scary:zhat}, imply
	\begin{equation*}
		\z^{k+1} - \z^k = \gamma\delta(z_g^k - z^k) - \gamma\nu^{-1}\mP(y_g^k + z_g^k).
	\end{equation*}
	Hence,
	\begin{align*}
		\frac{1}{\gamma}\sqn{\z^{k+1} - z^*}
		&=
		\frac{1}{\gamma}\sqn{\z^k - z^*}
		+
		2\delta\<z_g^k - z^k,\z^k - z^*>
		-
		2\nu^{-1}\<\mP(y_g^k + z_g^k),\z^k - z^*>
		+
		\frac{1}{\gamma}\sqn{\z^{k+1} - \z^k}
		\\&=
		\frac{1}{\gamma}\sqn{\z^k - z^*}
		+
		\delta\sqn{z_g^k - \mP m^k - z^*} - \delta\sqn{\z^k - z^*} - \delta\sqn{z_g^k - z^k}
		\\&-
		2\nu^{-1}\<\mP(y_g^k + z_g^k),\z^k - z^*>
		+
		\gamma\sqn{\delta(z_g^k - z^k) - \nu^{-1}\mP(y_g^k+z_g^k)}
		\\&\leq
		\left(\frac{1}{\gamma} - \delta\right)\sqn{\z^k - z^*}
		+
		2\delta\sqn{z_g^k  - z^*}
		+
		2\delta\sqn{m^k}_\mP
		-
		\delta\sqn{z_g^k - z^k}
		\\&-
		2\nu^{-1}\<\mP(y_g^k + z_g^k),\z^k - z^*>
		+
		2\gamma\delta^2\sqn{z_g^k - z^k}
		+
		\gamma\sqn{\nu^{-1}\mP(y_g^k+z_g^k)}
		\\&\leq
		\left(\frac{1}{\gamma} - \delta\right)\sqn{\z^k - z^*}
		+
		2\delta\sqn{z_g^k  - z^*}
		+
		\left(2\gamma\delta^2-\delta\right)\sqn{z_g^k - z^k}
		\\&-
		2\nu^{-1}\<\mP(y_g^k + z_g^k),z^k - z^*>
		+
		\gamma\sqn{\nu^{-1}\mP(y_g^k+z_g^k)}
		+
		2\delta\sqn{m^k}_\mP
		+
		2\nu^{-1}\<\mP(y_g^k + z_g^k),m^k>.
	\end{align*}
	Using the fact that $z^k \in \cL^\perp$ for all $k=0,1,2\ldots$ and optimality condition \eqref{opt:z} we get
	\begin{align*}
		\frac{1}{\gamma}\sqn{\z^{k+1} - z^*}
		&\leq
		\left(\frac{1}{\gamma} - \delta\right)\sqn{\z^k - z^*}
		+
		2\delta\sqn{z_g^k  - z^*}
		+
		\left(2\gamma\delta^2-\delta\right)\sqn{z_g^k - z^k}
		\\&-
		2\nu^{-1}\<y_g^k + z_g^k - (y^*+z^*),z^k - z^*>
		+
		\gamma\nu^{-2}\sqn{y_g^k+z_g^k}_\mP
		\\&+
		2\delta\sqn{m^k}_\mP
		+
		2\nu^{-1}\<\mP(y_g^k + z_g^k),m^k>.
	\end{align*}
	Using Young's inequality we get
	\begin{align*}
		\frac{1}{\gamma}\sqn{\z^{k+1} - z^*}
		&\leq
		\left(\frac{1}{\gamma} - \delta\right)\sqn{\z^k - z^*}
		+
		2\delta\sqn{z_g^k  - z^*}
		+
		\left(2\gamma\delta^2-\delta\right)\sqn{z_g^k - z^k}
		\\&-
		2\nu^{-1}\<y_g^k + z_g^k - (y^*+z^*),z^k - z^*>
		+
		\gamma\nu^{-2}\sqn{y_g^k+z_g^k}_\mP
		\\&+
		2\delta\sqn{m^k}_\mP
		+
		3\gamma\chi\nu^{-2}\sqn{y_g^k + z_g^k}_\mP + \frac{1}{3\gamma\chi}\sqn{m^k}_\mP.
	\end{align*}
	Using \eqref{scary:eq:2} we get
	\begin{align*}
		\frac{1}{\gamma}\sqn{\z^{k+1} - z^*}
		&\leq
		\left(\frac{1}{\gamma} - \delta\right)\sqn{\z^k - z^*}
		+
		2\delta\sqn{z_g^k  - z^*}
		+
		\left(2\gamma\delta^2-\delta\right)\sqn{z_g^k - z^k}
		\\&-
		2\nu^{-1}\<y_g^k + z_g^k - (y^*+z^*),z^k - z^*>
		+
		\gamma\nu^{-2}\sqn{y_g^k+z_g^k}_\mP
		\\&+
		2\delta\sqn{m^k}_\mP
		+
		6\gamma\nu^{-2}\chi\sqn{y_g^k + z_g^k}_\mP + \frac{4(1 - (4\chi)^{-1})}{3\gamma}\sqn{m^k}_\mP - \frac{4}{3\gamma}\sqn{m^{k+1}}_\mP
		\\&=
		\left(\frac{1}{\gamma} - \delta\right)\sqn{\z^k - z^*}
		+
		2\delta\sqn{z_g^k  - z^*}
		+
		\left(2\gamma\delta^2-\delta\right)\sqn{z_g^k - z^k}
		\\&-
		2\nu^{-1}\<y_g^k + z_g^k - (y^*+z^*),z^k - z^*>
		+
		\gamma\nu^{-2}\left(1 + 6\chi\right)\sqn{y_g^k+z_g^k}_\mP
		\\&+
		\left(1-(4\chi)^{-1}+\frac{3\gamma\delta}{2}\right)\frac{4}{3\gamma}\sqn{m^k}_\mP - \frac{4}{3\gamma}\sqn{m^{k+1}}_\mP.
	\end{align*}
\end{proof}

\begin{lemma}
	The following inequality holds:
	\begin{equation}
		\begin{split}\label{scary:eq:3}
			\MoveEqLeft[3]2\<y_g^k + z_g^k - (y^*+z^*),y^k + z^k - (y^*+ z^*)>
			\\&\geq
			2\sqn{y_g^k + z_g^k - (y^*+z^*)}
			+
			\frac{(1-\sigma_2/2)}{\sigma_2}\left(\sqn{y_g^k + z_g^k - (y^*+z^*)}  - \sqn{y_f^k + z_f^k - (y^*+z^*)}\right).
		\end{split}
	\end{equation}
\end{lemma}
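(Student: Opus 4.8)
The plan is to reduce the inequality to an elementary identity for a convex combination. Summing Lines~\ref{scary:line:y:1} and~\ref{scary:line:z:1} of Algorithm~\ref{scary:alg} gives
\[
y_g^k + z_g^k = \sigma_1(y^k + z^k) + (1-\sigma_1)(y_f^k + z_f^k).
\]
Writing $a = y_g^k + z_g^k - (y^*+z^*)$, $u = y^k + z^k - (y^*+z^*)$, $v = y_f^k + z_f^k - (y^*+z^*)$, this reads $a = \sigma_1 u + (1-\sigma_1)v$, and the target inequality \eqref{scary:eq:3} becomes $2\<a,u> \geq 2\sqn{a} + \tfrac{1-\sigma_2/2}{\sigma_2}\big(\sqn{a} - \sqn{v}\big)$.

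Next I would solve for $u = \sigma_1^{-1}a - \sigma_1^{-1}(1-\sigma_1)v$, so that
\[
2\<a,u> = \frac{2}{\sigma_1}\sqn{a} - \frac{1-\sigma_1}{\sigma_1}\cdot 2\<a,v>,
\]
and apply the polarization identity $2\<a,v> = \sqn{a} + \sqn{v} - \sqn{a-v}$. Collecting terms yields
\[
2\<a,u> = \frac{1+\sigma_1}{\sigma_1}\sqn{a} - \frac{1-\sigma_1}{\sigma_1}\sqn{v} + \frac{1-\sigma_1}{\sigma_1}\sqn{a-v}.
\]
Since $a - v = \sigma_1(u-v)$, the last term equals $\sigma_1\sqn{u-v} \geq 0$ (equivalently one may just use $\sigma_1 = (1/\sigma_2+1/2)^{-1}\le 1$), so it can be dropped to obtain a clean lower bound.

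Finally, substituting $1/\sigma_1 = 1/\sigma_2 + 1/2$ from the definition of $\sigma_1$ in~\eqref{scary:sigma1} gives $\tfrac{1+\sigma_1}{\sigma_1} = \tfrac{1}{\sigma_1}+1 = 2 + \tfrac{1-\sigma_2/2}{\sigma_2}$ and $\tfrac{1-\sigma_1}{\sigma_1} = \tfrac{1}{\sigma_1}-1 = \tfrac{1-\sigma_2/2}{\sigma_2}$, which reproduces the right-hand side of \eqref{scary:eq:3} exactly. There is essentially no obstacle here: the only point requiring care is that the simplification drops a nonnegative term, which is immediate from $a-v = \sigma_1(u-v)$, so no side condition on $\sigma_2$ is even needed.
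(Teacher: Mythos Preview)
Your proof is correct and follows essentially the same approach as the paper: both use the convex-combination relation $y_g^k+z_g^k=\sigma_1(y^k+z^k)+(1-\sigma_1)(y_f^k+z_f^k)$ from Lines~\ref{scary:line:y:1} and~\ref{scary:line:z:1}, apply a polarization identity, drop the same nonnegative term $\tfrac{1-\sigma_1}{\sigma_1}\sqn{a-v}$, and then substitute $\sigma_1^{-1}=\sigma_2^{-1}+1/2$. The only cosmetic difference is that the paper expands $2\<a,u>=2\sqn{a}+2\<a,u-a>$ and polarizes $\<a,a-v>$, whereas you solve for $u$ and polarize $\<a,v>$; these are algebraically identical.
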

\begin{proof}
	\begin{align*}
		\MoveEqLeft[4]2\<y_g^k + z_g^k - (y^*+z^*),y^k + z^k - (y^*+ z^*)>
		\\&=
		2\sqn{y_g^k + z_g^k - (y^*+z^*)}
		+
		2\<y_g^k + z_g^k - (y^*+z^*),y^k + z^k - (y_g^k + z_g^k)>.
	\end{align*}
	Using Lines~\ref{scary:line:y:1} and~\ref{scary:line:z:1} of Algorithm~\ref{scary:alg} we get
	\begin{align*}
		\MoveEqLeft[4]2\<y_g^k + z_g^k - (y^*+z^*),y^k + z^k - (y^*+ z^*)>
		\\&=
		2\sqn{y_g^k + z_g^k - (y^*+z^*)}
		+
		\frac{2(1-\sigma_1)}{\sigma_1}\<y_g^k + z_g^k - (y^*+z^*), y_g^k + z_g^k - (y_f^k + z_f^k)>
		\\&=
		2\sqn{y_g^k + z_g^k - (y^*+z^*)}
		\\&+
		\frac{(1-\sigma_1)}{\sigma_1}\left(\sqn{y_g^k + z_g^k - (y^*+z^*)} + \sqn{y_g^k + z_g^k - (y_f^k + z_f^k)} - \sqn{y_f^k + z_f^k - (y^*+z^*)}\right)
		\\&\geq
		2\sqn{y_g^k + z_g^k - (y^*+z^*)}
		+
		\frac{(1-\sigma_1)}{\sigma_1}\left(\sqn{y_g^k + z_g^k - (y^*+z^*)}  - \sqn{y_f^k + z_f^k - (y^*+z^*)}\right).
	\end{align*}
	Using $\sigma_1$ definition \eqref{scary:sigma1} we get
	\begin{align*}
		\MoveEqLeft[4]2\<y_g^k + z_g^k - (y^*+z^*),y^k + z^k - (y^*+ z^*)>
		\\&\geq
		2\sqn{y_g^k + z_g^k - (y^*+z^*)}
		+
		\frac{(1-\sigma_2/2)}{\sigma_2}\left(\sqn{y_g^k + z_g^k - (y^*+z^*)}  - \sqn{y_f^k + z_f^k - (y^*+z^*)}\right).
	\end{align*}
\end{proof}
\begin{lemma}
	Let $\zeta$ be defined by
	\begin{align}\label{scary:zeta}
		\zeta = 1/2.
	\end{align}
	Then the following inequality holds:
	\begin{align}
		\label{scary:eq:4}
			\MoveEqLeft[6]-2\<y^{k+1} - y^k,y_g^k + z_g^k - (y^*+z^*)>
			\nonumber\\&\leq
			\frac{1}{\sigma_2}\sqn{y_g^k + z_g^k - (y^*+z^*)}
			-
			\frac{1}{\sigma_2}\sqn{y_f^{k+1} + z_f^{k+1} - (y^*+z^*)}
			\nonumber\\&+
			2\sigma_2\sqn{y^{k+1} - y^k}
			-
			\frac{1}{2\sigma_2\chi}\sqn{y_g^k + z_g^k}_{\mP}.
	\end{align}
\end{lemma}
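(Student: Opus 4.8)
\end{lemma}
\begin{proof}
The plan is to rewrite $y_f^{k+1}+z_f^{k+1}$ through Lines~\ref{scary:line:y:3} and~\ref{scary:line:z:3} of Algorithm~\ref{scary:alg}, expand the resulting square, and close the estimate with a single application of Young's inequality together with property~\eqref{eq:chi}. Throughout, write $\mathbf{W}_k := \mW(k)\otimes\mI_d$, $u := y_g^k+z_g^k-(y^*+z^*)$, $\Delta y := y^{k+1}-y^k$, and $w := \mP(y_g^k+z_g^k)$, so that $\sqn{y_g^k+z_g^k}_\mP = \sqn{w}$ and $w\in\cL^\perp$.

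First I would combine $y_f^{k+1}=y_g^k+\sigma_2\Delta y$ (Line~\ref{scary:line:y:3}) with $z_f^{k+1}=z_g^k-\zeta\,\mathbf{W}_k(y_g^k+z_g^k)$ (Line~\ref{scary:line:z:3}) and subtract $y^*+z^*$. Since $y^*+z^*=-\nu x^*\in\cL$ (optimality condition~\eqref{opt:y} together with $x^*\in\cL$), Assumption~\ref{assum:gossip_matrix_sequence}(2) gives $\mathbf{W}_k(y^*+z^*)=0$, while parts~(2)--(3) of the same assumption give $\mathbf{W}_k(y_g^k+z_g^k)=\mathbf{W}_kw$, $\mathbf{W}_kw\in\cL^\perp$, and $u-w\in\cL$; in particular $w=\mP u$ and $\langle u,\mathbf{W}_kw\rangle=\langle w,\mathbf{W}_kw\rangle$. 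This yields the identity
\[
y_f^{k+1}+z_f^{k+1}-(y^*+z^*) \;=\; u + \sigma_2\Delta y - \zeta\,\mathbf{W}_kw .
\]

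Next I would square this identity, insert $\zeta=\tfrac12$ from~\eqref{scary:zeta}, and use $\langle u,\mathbf{W}_kw\rangle=\langle w,\mathbf{W}_kw\rangle$, obtaining
\[
\sqn{y_f^{k+1}+z_f^{k+1}-(y^*+z^*)} = \sqn{u+\sigma_2\Delta y} - \langle w,\mathbf{W}_kw\rangle - \sigma_2\langle\Delta y,\mathbf{W}_kw\rangle + \tfrac14\sqn{\mathbf{W}_kw}.
\]
Bounding the mixed term by Young's inequality in the form $-\sigma_2\langle\Delta y,\mathbf{W}_kw\rangle\le\sigma_2^2\sqn{\Delta y}+\tfrac14\sqn{\mathbf{W}_kw}$ leaves the quantity $-\langle w,\mathbf{W}_kw\rangle+\tfrac12\sqn{\mathbf{W}_kw}$; expanding property~\eqref{eq:chi} for $w\in\cL^\perp$ gives $\sqn{\mathbf{W}_kw}\le2\langle w,\mathbf{W}_kw\rangle-\chi^{-1}\sqn{w}$, hence this quantity is at most $-\tfrac{1}{2\chi}\sqn{w}$. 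Combining with $\sqn{u+\sigma_2\Delta y}=\sqn{u}+2\sigma_2\langle u,\Delta y\rangle+\sigma_2^2\sqn{\Delta y}$, dividing by $\sigma_2$, moving $2\langle u,\Delta y\rangle$ to the left-hand side, and recalling the abbreviations, one reaches exactly the claimed inequality.

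The only delicate points are the $\cL$/$\cL^\perp$ bookkeeping --- it is what allows replacing $\mathbf{W}_k$ by its action on the zero-mean part $w$ and makes every cross term involving the $\cL$-component drop out --- and the precise balance in Young's inequality: the total coefficient of $\sqn{\mathbf{W}_kw}$ after the split must equal exactly $\tfrac12$, which is precisely the amount that~\eqref{eq:chi} converts into $-\tfrac{1}{2\chi}\sqn{w}$ with the $\langle w,\mathbf{W}_kw\rangle$ term cancelling, so any other split loses the constant and fails to match the coefficient $\tfrac{1}{2\sigma_2\chi}$ in the statement. No spectral estimate on $\mW(k)$ beyond~\eqref{eq:chi} is required, and $\zeta=\tfrac12$ enters only through this balancing.
\end{proof}
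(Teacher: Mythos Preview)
Your proof is correct and follows essentially the same route as the paper: expand $\sqn{y_f^{k+1}+z_f^{k+1}-(y^*+z^*)}$ via the update rules, exploit $y^*+z^*\in\cL$ and $\range\mathbf{W}_k\subset\cL^\perp$ to reduce the $\mathbf{W}_k$-terms to their action on $w=\mP(y_g^k+z_g^k)$, and use~\eqref{eq:chi} to extract the $-\tfrac{1}{2\chi}\sqn{w}$ gain. The only cosmetic difference is that the paper splits $\sqn{(y_f^{k+1}-y_g^k)+(z_f^{k+1}-z_g^k)}\le 2\sqn{y_f^{k+1}-y_g^k}+2\sqn{z_f^{k+1}-z_g^k}$ and then applies a polarization identity before invoking~\eqref{eq:chi}, whereas you keep the square whole and apply Young's inequality to the single cross term $-\sigma_2\langle\Delta y,\mathbf{W}_kw\rangle$; both bookkeeping choices land on the same $\tfrac12\sqn{\mathbf{W}_kw}$ coefficient that~\eqref{eq:chi} needs.
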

\begin{proof}
	\begin{align*}
		\MoveEqLeft[4]\sqn{y_f^{k+1} + z_f^{k+1} - (y^*+z^*)}
		\\&=
		\sqn{y_g^k + z_g^k - (y^*+z^*)} + 2\<y_f^{k+1} + z_f^{k+1} - (y_g^k + z_g^k),y_g^k + z_g^k - (y^*+z^*)>
		\\&+
		\sqn{y_f^{k+1} + z_f^{k+1} - (y_g^k + z_g^k)}
		\\&\leq
		\sqn{y_g^k + z_g^k - (y^*+z^*)} + 2\<y_f^{k+1} + z_f^{k+1} - (y_g^k + z_g^k),y_g^k + z_g^k - (y^*+z^*)>
		\\&+
		2\sqn{y_f^{k+1} - y_g^k}
		+
		2\sqn{z_f^{k+1} - z_g^k}.
	\end{align*}
	Using Line~\ref{scary:line:y:3} of Algorithm~\ref{scary:alg} we get
	\begin{align*}
		\MoveEqLeft[4]\sqn{y_f^{k+1} + z_f^{k+1} - (y^*+z^*)}
		\\&\leq
		\sqn{y_g^k + z_g^k - (y^*+z^*)}
		+
		2\sigma_2\<y^{k+1} - y^k,y_g^k + z_g^k - (y^*+z^*)>
		+
		2\sigma_2^2\sqn{y^{k+1} - y^k}
		\\&+
		2\<z_f^{k+1} - z_g^k,y_g^k + z_g^k - (y^*+z^*)>
		+
		2\sqn{z_f^{k+1} - z_g^k}.
	\end{align*}
	Using Line~\ref{scary:line:z:3} of Algorithm~\ref{scary:alg} and optimality condition \eqref{opt:z} we get
	\begin{align*}
		\MoveEqLeft[4]\sqn{y_f^{k+1} + z_f^{k+1} - (y^*+z^*)}
		\\&\leq
		\sqn{y_g^k + z_g^k - (y^*+z^*)}
		+
		2\sigma_2\<y^{k+1} - y^k,y_g^k + z_g^k - (y^*+z^*)>
		+
		2\sigma_2^2\sqn{y^{k+1} - y^k}
		\\&-
		2\zeta\< (\mW(k)\otimes \mI_d)(y_g^k + z_g^k),y_g^k + z_g^k - (y^*+z^*)>
		+
		2\zeta^2\sqn{(\mW(k)\otimes \mI_d)(y_g^k + z_g^k)}
		\\&=
		\sqn{y_g^k + z_g^k - (y^*+z^*)}
		+
		2\sigma_2\<y^{k+1} - y^k,y_g^k + z_g^k - (y^*+z^*)>
		+
		2\sigma_2^2\sqn{y^{k+1} - y^k}
		\\&-
		2\zeta\< (\mW(k)\otimes \mI_d)(y_g^k + z_g^k),y_g^k + z_g^k>
		+
		2\zeta^2\sqn{(\mW(k)\otimes \mI_d)(y_g^k + z_g^k)}.
	\end{align*}
	Using $\zeta$ definition \eqref{scary:zeta} we get
	\begin{align*}
		\MoveEqLeft[4]\sqn{y_f^{k+1} + z_f^{k+1} - (y^*+z^*)}
		\\&\leq
		\sqn{y_g^k + z_g^k - (y^*+z^*)}
		+
		2\sigma_2\<y^{k+1} - y^k,y_g^k + z_g^k - (y^*+z^*)>
		+
		2\sigma_2^2\sqn{y^{k+1} - y^k}
		\\&-
		\< (\mW(k)\otimes \mI_d)(y_g^k + z_g^k),y_g^k + z_g^k>
		+
		\frac{1}{2}\sqn{(\mW(k)\otimes \mI_d)(y_g^k + z_g^k)}
		\\&=
		\sqn{y_g^k + z_g^k - (y^*+z^*)}
		+
		2\sigma_2\<y^{k+1} - y^k,y_g^k + z_g^k - (y^*+z^*)>
		+
		2\sigma_2^2\sqn{y^{k+1} - y^k}
		\\&-
		\frac{1}{2}\sqn{(\mW(k)\otimes \mI_d)(y_g^k + z_g^k)}
		-
		\frac{1}{2}\sqn{y_g^k + z_g^k}
		+
		\frac{1}{2}\sqn{(\mW(k)\otimes \mI_d)(y_g^k + z_g^k) - (y_g^k + z_g^k)}
		\\&+
		\frac{1}{2}\sqn{(\mW(k)\otimes \mI_d)(y_g^k + z_g^k)}
		\\&\leq
		\sqn{y_g^k + z_g^k - (y^*+z^*)}
		+
		2\sigma_2\<y^{k+1} - y^k,y_g^k + z_g^k - (y^*+z^*)>
		+
		2\sigma_2^2\sqn{y^{k+1} - y^k}
		\\&-
		\frac{1}{2}\sqn{y_g^k + z_g^k}_\mP
		+
		\frac{1}{2}\sqn{(\mW(k)\otimes \mI_d)(y_g^k + z_g^k) - (y_g^k + z_g^k)}_\mP.
		\\&=
		\sqn{y_g^k + z_g^k - (y^*+z^*)}
		+
		2\sigma_2\<y^{k+1} - y^k,y_g^k + z_g^k - (y^*+z^*)>
		+
		2\sigma_2^2\sqn{y^{k+1} - y^k}
		\\&-
		\frac{1}{2}\sqn{y_g^k + z_g^k}_\mP
		+
		\frac{1}{2}\sqn{(\mW(k)\otimes \mI_d)\mP(y_g^k + z_g^k) - \mP(y_g^k + z_g^k)}.
	\end{align*}
	Using condition \eqref{eq:chi} we get
	\begin{align*}
		\MoveEqLeft[4]\sqn{y_f^{k+1} + z_f^{k+1} - (y^*+z^*)}
		\\&\leq
		\sqn{y_g^k + z_g^k - (y^*+z^*)}
		+
		2\sigma_2\<y^{k+1} - y^k,y_g^k + z_g^k - (y^*+z^*)>
		+
		2\sigma_2^2\sqn{y^{k+1} - y^k}
		\\&-
		(2\chi)^{-1}\sqn{y_g^k + z_g^k}_\mP.
	\end{align*}
	Rearranging gives
	\begin{align*}
		\MoveEqLeft[6]-2\<y^{k+1} - y^k,y_g^k + z_g^k - (y^*+z^*)>
		\\&\leq
		\frac{1}{\sigma_2}\sqn{y_g^k + z_g^k - (y^*+z^*)}
		-
		\frac{1}{\sigma_2}\sqn{y_f^{k+1} + z_f^{k+1} - (y^*+z^*)}
		\\&+
		2\sigma_2\sqn{y^{k+1} - y^k}
		-
		\frac{1}{2\sigma_2\chi}\sqn{y_g^k + z_g^k}_{\mP}.
	\end{align*}
\end{proof} 

\begin{lemma}
	Let $\delta$ be defined as follows:
	\begin{equation}\label{scary:delta}
		\delta = \frac{1}{17L}.
	\end{equation}
	Let $\gamma$ be defined as follows:
	\begin{equation}\label{scary:gamma}
		\gamma = \frac{\nu}{14\sigma_2\chi^2}.
	\end{equation}
	Let $\theta$ be defined as follows:
	\begin{equation}\label{scary:theta}
		\theta = \frac{\nu}{4\sigma_2}.
	\end{equation}
	Let $\sigma_2$ be defined as follows:
	\begin{equation}\label{scary:sigma2}
		\sigma_2 = \frac{\sqrt{\mu}}{16\chi\sqrt{L}}.
	\end{equation}
	Let $\Psi_{yz}^k$ be the following Lyapunov function
	\begin{align}
		\label{scary:Psi_yz}
			\Psi_{yz}^k &= \left(\frac{1}{\theta} + \frac{\beta}{2}\right)\sqn{y^{k} - y^*}
			+
			\frac{\beta}{2\sigma_2}\sqn{y_f^{k} - y^*}
			+
			\frac{1}{\gamma}\sqn{\z^{k} - z^*}
			+
			\frac{4}{3\gamma}\sqn{m^{k}}_\mP
			+
			\frac{\nu^{-1}}{\sigma_2}\sqn{y_f^{k} + z_f^{k} - (y^*+z^*)}.
	\end{align}
	Then the following inequality holds:
	\begin{align}\label{scary:eq:yz}
		  \E{\Psi_{yz}^{k+1}} &\leq \left(1 - \frac{\sqrt{\mu}}{32\chi\sqrt{L}}\right)\Psi_{yz}^k
    		-
    		2\E{\<x^{k+1} - x^*, y^{k+1} - y^*>}
            +
            \bb_F(x_g^k, x^*)
    		\nonumber\\&+
    		\frac{\oL}{Lb}\left(\bb_F(\omega^k, x^*) - \bb_F(x_g^k, x^*) - \<\g F(x_g^k) - \g F(x^*) -\nu x_g^k + \nu x^*, \omega^k - x_g^k>\right).
	\end{align}
\end{lemma}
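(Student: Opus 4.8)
The plan is to add the two ``primal--dual descent'' estimates \eqref{scary:eq:y} and \eqref{scary:eq:z}, fuse their two $\nu^{-1}$-weighted inner products into one telescoping-friendly quantity via \eqref{scary:eq:3} and \eqref{scary:eq:4}, and then verify that every leftover squared-norm remainder carries a nonpositive coefficient while each ``state'' term of $\Psi_{yz}^k$ contracts at rate $\rho:=\sigma_2/2=\tfrac{\sqrt{\mu}}{32\chi\sqrt{L}}$. Concretely, after summing \eqref{scary:eq:y} and \eqref{scary:eq:z} and writing $y^{k+1}-y^*=(y^{k+1}-y^k)+(y^k-y^*)$, the terms $-2\nu^{-1}\E{\<y_g^k+z_g^k-(y^*+z^*),y^{k+1}-y^*>}$ and $-2\nu^{-1}\<y_g^k+z_g^k-(y^*+z^*),z^k-z^*>$ combine into
\[
-2\nu^{-1}\langle y_g^k+z_g^k-(y^*+z^*),\,y^{k+1}-y^k\rangle-2\nu^{-1}\langle y_g^k+z_g^k-(y^*+z^*),\,(y^k+z^k)-(y^*+z^*)\rangle .
\]

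I would then apply $\nu^{-1}\times$\eqref{scary:eq:4} to the first piece and $-\nu^{-1}\times$\eqref{scary:eq:3} to the second. This produces $-\tfrac{\nu^{-1}}{\sigma_2}\E{\sqn{y_f^{k+1}+z_f^{k+1}-(y^*+z^*)}}+\tfrac{\nu^{-1}(1-\sigma_2/2)}{\sigma_2}\sqn{y_f^k+z_f^k-(y^*+z^*)}$, together with the squared-norm remainders $-\tfrac32\nu^{-1}\sqn{y_g^k+z_g^k-(y^*+z^*)}$, $2\nu^{-1}\sigma_2\sqn{y^{k+1}-y^k}$ and $-\tfrac{\nu^{-1}}{2\sigma_2\chi}\sqn{y_g^k+z_g^k}_\mP$. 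The first two terms are exactly the fifth component of $\Psi_{yz}$, and since $1-\sigma_2/2=1-\rho$ that component contracts. The only genuinely positive leftover is $2\delta\sqn{z_g^k-z^*}$ from \eqref{scary:eq:z}; I bound it by Young's inequality $\sqn{z_g^k-z^*}\le 17\sqn{y_g^k+z_g^k-(y^*+z^*)}+\tfrac{17}{16}\sqn{y_g^k-y^*}$, the constant $17$ being chosen so that the two induced cancellations close simultaneously.

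Collecting coefficients: the $\sqn{y_g^k-y^*}$ coefficient is $-\tfrac\beta4+2\delta\cdot\tfrac{17}{16}=-\tfrac1{8L}+\tfrac1{8L}=0$ by \eqref{scary:beta}, \eqref{scary:delta}; the $\sqn{y_g^k+z_g^k-(y^*+z^*)}$ coefficient is $-\tfrac32\nu^{-1}+34\delta=-\tfrac3\mu+\tfrac2L\le0$ since $\mu\le L$, $\nu=\mu/2$; the $\sqn{y^{k+1}-y^k}$ coefficient $\bigl(\tfrac{\beta\sigma_2^2}{4}-\tfrac1\theta\bigr)+2\nu^{-1}\sigma_2$ is nonpositive because $\tfrac1\theta=\tfrac{4\sigma_2}{\nu}$ and $\beta\sigma_2\nu\le8$; the $\sqn{z_g^k-z^k}$ coefficient $2\gamma\delta^2-\delta$ is negative since $2\gamma\delta<1$; and the $\sqn{y_g^k+z_g^k}_\mP$ coefficient $\gamma\nu^{-2}(1+6\chi)-\tfrac{\nu^{-1}}{2\sigma_2\chi}$ is nonpositive since $\gamma(1+6\chi)\le7\gamma\chi=\tfrac{\nu}{2\sigma_2\chi}$ by \eqref{scary:gamma} and $\chi\ge1$; all of these are dropped. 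For the four remaining components of $\Psi_{yz}$ the per-component contraction amounts to $\tfrac1\theta\le(1-\rho)\bigl(\tfrac1\theta+\tfrac\beta2\bigr)$, $\tfrac{\beta(1-\sigma_2/2)}{2\sigma_2}\le(1-\rho)\tfrac{\beta}{2\sigma_2}$, $\tfrac1\gamma-\delta\le(1-\rho)\tfrac1\gamma$ and $\bigl(1-\tfrac1{4\chi}+\tfrac{3\gamma\delta}{2}\bigr)\tfrac{4}{3\gamma}\le(1-\rho)\tfrac{4}{3\gamma}$; plugging in $\gamma=\tfrac{4\sqrt{\mu L}}{7\chi}$, $\sigma_2=\tfrac{\sqrt\mu}{16\chi\sqrt L}$, $\nu=\mu/2$ reduces each to an elementary numerical inequality ($\rho\le\sigma_2/2$, $\rho\le\gamma\delta=\tfrac4{119\chi}\sqrt{\mu/L}$, $\rho+\tfrac{3\gamma\delta}{2}\le\tfrac1{4\chi}$, and $1\le(16\chi^2-1)\tfrac2\chi\sqrt{L/\mu}$), all valid for $\chi\ge1$, $\mu\le L$. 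Assembling the survivors --- the contracted $\Psi_{yz}^k$, the coupling term $-2\E{\<x^{k+1}-x^*,y^{k+1}-y^*>}$ and the $\bb_F$-terms carried verbatim from \eqref{scary:eq:y} --- yields exactly \eqref{scary:eq:yz}.

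The main obstacle is the accounting in the first two steps: there are roughly a dozen squared-norm remainders of four distinct ``types'' ($\sqn{\cdot-y^*}$, $\sqn{\cdot-z^*}$, $\sqn{\cdot-(y^*+z^*)}$, $\sqn{\cdot}_\mP$), several cancellations are exact (the $\sqn{y_g^k-y^*}$ one) or hold only by the $\mu\le L$ margin, and the Young constant must be tuned to clear two remainders at once; making all the constants fixed in \eqref{scary:delta}--\eqref{scary:sigma2} line up with the advertised rate $\tfrac{\sqrt\mu}{32\chi\sqrt L}$ is where essentially all of the effort goes.
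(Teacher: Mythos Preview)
Your proposal is correct and follows essentially the same route as the paper: sum \eqref{scary:eq:y} and \eqref{scary:eq:z}, split the cross term via $y^{k+1}-y^*=(y^{k+1}-y^k)+(y^k-y^*)$, apply \eqref{scary:eq:3} and \eqref{scary:eq:4}, absorb $2\delta\sqn{z_g^k-z^*}$ into $-\tfrac{1}{8L}\sqn{y_g^k-y^*}$ and $-\tfrac{3}{\mu}\sqn{y_g^k+z_g^k-(y^*+z^*)}$, and check the per-component contraction bound $\max\{(1+\theta\beta/2)^{-1},\,1-\gamma\delta,\,1-\sigma_2/2,\,1-(8\chi)^{-1}\}\le 1-\rho$. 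The only cosmetic difference is that you spell out the Young split $\sqn{z_g^k-z^*}\le 17\sqn{y_g^k+z_g^k-(y^*+z^*)}+\tfrac{17}{16}\sqn{y_g^k-y^*}$ (which explains the choice $\delta=1/(17L)$), whereas the paper leaves that step implicit when it writes ``Using $\delta$ definition \eqref{scary:delta}'' and silently drops all three terms.
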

\begin{proof}
	Combining \eqref{scary:eq:y} and \eqref{scary:eq:z} gives
	\begin{align*}
		\left(\frac{1}{\theta} + \frac{\beta}{2}\right)&\E{\sqn{y^{k+1} - y^*}}
		+
		\frac{\beta}{2\sigma_2}\E{\sqn{y_f^{k+1} - y^*}}
		+
		\frac{1}{\gamma}\sqn{\z^{k+1} - z^*}
		+
		\frac{4}{3\gamma}\sqn{m^{k+1}}_\mP
		\\&\leq
		\left(\frac{1}{\gamma} - \delta\right)\sqn{\z^k - z^*}
		+
		\left(1-(4\chi)^{-1}+\frac{3\gamma\delta}{2}\right)\frac{4}{3\gamma}\sqn{m^k}_\mP
		+
		\frac{1}{\theta}\sqn{y^k - y^*}
		+
		\frac{\beta(1-\sigma_2/2)}{2\sigma_2}\sqn{y_f^k - y^*}
		\\&-
		2\nu^{-1}\<y_g^k + z_g^k - (y^*+z^*),y^k + z^k - (y^*+ z^*)>
		-
		2\nu^{-1}\E{\<y_g^k + z_g^k - (y^* + z^*), y^{k+1} - y^k>}
		\\&+
		\gamma\nu^{-2}\left(1 + 6\chi\right)\sqn{y_g^k+z_g^k}_\mP
		+
		\left(\frac{\beta\sigma_2^2}{4} - \frac{1}{\theta}\right)\E{\sqn{y^{k+1} - y^k}}
		+
		2\delta\sqn{z_g^k  - z^*}
		-
		\frac{\beta}{4}\sqn{y_g^k - y^*}
		\\&-
		2\E{\<x^{k+1} - x^*, y^{k+1} - y^*>}
		+\left(2\gamma\delta^2-\delta\right)\sqn{z_g^k - z^k}
        +
        \bb_F(x_g^k, x^*)
		\\&+
		\frac{\oL}{Lb}\left(\bb_F(\omega^k, x^*) -   \bb_F(x_g^k, x^*) - \<\g F(x_g^k) - \g F(x^*) -\nu x_g^k + \nu x^*, \omega^k - x_g^k>\right).
	\end{align*}
	Using \eqref{scary:eq:3} and \eqref{scary:eq:4} we get
	\begin{align*}
		\MoveEqLeft[4]\left(\frac{1}{\theta} + \frac{\beta}{2}\right)\E{\sqn{y^{k+1} - y^*}}
		+
		\frac{\beta}{2\sigma_2}\E{\sqn{y_f^{k+1} - y^*}}
		+
		\frac{1}{\gamma}\sqn{\z^{k+1} - z^*}
		+
		\frac{4}{3\gamma}\sqn{m^{k+1}}_\mP
		\\&\leq
		\left(\frac{1}{\gamma} - \delta\right)\sqn{\z^k - z^*}
		+
		\left(1- (4\chi)^{-1}+\frac{3\gamma\delta}{2}\right)\frac{4}{3\gamma}\sqn{m^k}_\mP
		+
		\frac{1}{\theta}\sqn{y^k - y^*}
		+
		\frac{\beta(1-\sigma_2/2)}{2\sigma_2}\sqn{y_f^k - y^*}
		\\&-
		2\nu^{-1}\sqn{y_g^k + z_g^k - (y^*+z^*)}
		+
		\frac{\nu^{-1}(1-\sigma_2/2)}{\sigma_2}\left(\sqn{y_f^k + z_f^k - (y^*+z^*)} - \sqn{y_g^k + z_g^k - (y^*+z^*)}\right)
		\\&+
		\frac{\nu^{-1}}{\sigma_2}\sqn{y_g^k + z_g^k - (y^*+z^*)}
		-
		\frac{\nu^{-1}}{\sigma_2}\E{\sqn{y_f^{k+1} + z_f^{k+1} - (y^*+z^*)}}
		+
		2\nu^{-1}\sigma_2\E{\sqn{y^{k+1} - y^k}}
		\\&-
		\frac{\nu^{-1}}{2\sigma_2\chi}\sqn{y_g^k + z_g^k}_{\mP}
		+
		\gamma\nu^{-2}\left(1 + 6\chi\right)\sqn{y_g^k+z_g^k}_\mP
		+
		\left(\frac{\beta\sigma_2^2}{4} - \frac{1}{\theta}\right)\E{\sqn{y^{k+1} - y^k}}
		+
		2\delta\sqn{z_g^k  - z^*}
		\\&-
		\frac{\beta}{4}\sqn{y_g^k - y^*}
		-
		2\E{\<x^{k+1} - x^*, y^{k+1} - y^*>}
		+\left(2\gamma\delta^2-\delta\right)\sqn{z_g^k - z^k}
        +
        \bb_F(x_g^k, x^*)
		\\&+
		\frac{\oL}{Lb}\left(\bb_F(\omega^k, x^*) -   \bb_F(x_g^k, x^*) - \<\g F(x_g^k) - \g F(x^*) -\nu x_g^k + \nu x^*, \omega^k - x_g^k>\right)
		\\&=
		\left(\frac{1}{\gamma} - \delta\right)\sqn{\z^k - z^*}
		+
		\left(1-(4\chi)^{-1}+\frac{3\gamma\delta}{2}\right)\frac{4}{3\gamma}\sqn{m^k}_\mP
		+
		\frac{1}{\theta}\sqn{y^k - y^*}
		+
		\frac{\beta(1-\sigma_2/2)}{2\sigma_2}\sqn{y_f^k - y^*}
		\\&+
		\frac{\nu^{-1}(1-\sigma_2/2)}{\sigma_2}\sqn{y_f^k + z_f^k - (y^*+z^*)}
		-
		\frac{\nu^{-1}}{\sigma_2}\E{\sqn{y_f^{k+1} + z_f^{k+1} - (y^*+z^*)}}
		\\&+
		2\delta\sqn{z_g^k  - z^*}
		-
		\frac{\beta}{4}\sqn{y_g^k - y^*}
		+
		\nu^{-1}\left(\frac{1}{\sigma_2} - \frac{(1-\sigma_2/2)}{\sigma_2} - 2\right)\sqn{y_g^k + z_g^k - (y^*+z^*)}
		\\&+
		\left(\gamma\nu^{-2}\left(1 + 6\chi\right) - \frac{\nu^{-1}}{2\sigma_2\chi}\right)\sqn{y_g^k+z_g^k}_\mP
		+
		\left(\frac{\beta\sigma_2^2}{4} + 	2\nu^{-1}\sigma_2 - \frac{1}{\theta}\right)\E{\sqn{y^{k+1} - y^k}}
		\\&+
		\left(2\gamma\delta^2-\delta\right)\sqn{z_g^k - z^k}
		-
		2\E{\<x^{k+1} - x^*, y^{k+1} - y^*>}
        +
        \bb_F(x_g^k, x^*)
		\\&+
		\frac{\oL}{Lb}\left(\bb_F(\omega^k, x^*) - \bb_F(x_g^k, x^*) - \<\g F(x_g^k) - \g F(x^*) -\nu x_g^k + \nu x^*, \omega^k - x_g^k>\right)
		\\&=
		\left(\frac{1}{\gamma} - \delta\right)\sqn{\z^k - z^*}
		+
		\left(1-(4\chi)^{-1}+\frac{3\gamma\delta}{2}\right)\frac{4}{3\gamma}\sqn{m^k}_\mP
		+
		\frac{1}{\theta}\sqn{y^k - y^*}
		+
		\frac{\beta(1-\sigma_2/2)}{2\sigma_2}\sqn{y_f^k - y^*}
		\\&+
		\frac{\nu^{-1}(1-\sigma_2/2)}{\sigma_2}\sqn{y_f^k + z_f^k - (y^*+z^*)}
		-
		\frac{\nu^{-1}}{\sigma_2}\E{\sqn{y_f^{k+1} + z_f^{k+1} - (y^*+z^*)}}
		\\&+
		2\delta\sqn{z_g^k  - z^*}
		-
		\frac{\beta}{4}\sqn{y_g^k - y^*}
		-
		\frac{3\nu^{-1}}{2}\sqn{y_g^k + z_g^k - (y^*+z^*)}
		+
		\left(2\gamma\delta^2-\delta\right)\sqn{z_g^k - z^k}
		\\&+
		\left(\gamma\nu^{-2}\left(1 + 6\chi\right) - \frac{\nu^{-1}}{2\sigma_2\chi}\right)\sqn{y_g^k+z_g^k}_\mP
		+
		\left(\frac{\beta\sigma_2^2}{4} + 	2\nu^{-1}\sigma_2 - \frac{1}{\theta}\right)\E{\sqn{y^{k+1} - y^k}}
		\\&+
		2\E{\<x^{k+1} - x^*, y^{k+1} - y^*>}
		+
        \bb_F(x_g^k, x^*)
        \\&+
		\frac{\oL}{Lb}\left(\bb_F(\omega^k, x^*) - \bb_F(x_g^k, x^*) - \<\g F(x_g^k) - \g F(x^*) -\nu x_g^k + \nu x^*, \omega^k - x_g^k>\right).
	\end{align*}
	Using $\beta$ definition \eqref{scary:beta} and $\nu$ definition \eqref{scary:nu} we get
	\begin{align*}
		\MoveEqLeft[4]\left(\frac{1}{\theta} + \frac{\beta}{2}\right)\E{\sqn{y^{k+1} - y^*}}
		+
		\frac{\beta}{2\sigma_2}\E{\sqn{y_f^{k+1} - y^*}}
		+
		\frac{1}{\gamma}\sqn{\z^{k+1} - z^*}
		+
		\frac{4}{3\gamma}\sqn{m^{k+1}}_\mP
		\\&\leq
		\left(\frac{1}{\gamma} - \delta\right)\sqn{\z^k - z^*}
		+
		\left(1-(4\chi)^{-1}+\frac{3\gamma\delta}{2}\right)\frac{4}{3\gamma}\sqn{m^k}_\mP
		+
		\frac{1}{\theta}\sqn{y^k - y^*}
		+
		\frac{\beta(1-\sigma_2/2)}{2\sigma_2}\sqn{y_f^k - y^*}
		\\&+
		\frac{\nu^{-1}(1-\sigma_2/2)}{\sigma_2}\sqn{y_f^k + z_f^k - (y^*+z^*)}
		-
		\frac{\nu^{-1}}{\sigma_2}\E{\sqn{y_f^{k+1} + z_f^{k+1} - (y^*+z^*)}}
		\\&+
		2\delta\sqn{z_g^k  - z^*}
		-
		\frac{1}{8L}\sqn{y_g^k - y^*}
		-
		\frac{3}{\mu}\sqn{y_g^k + z_g^k - (y^*+z^*)}
		+
		\left(2\gamma\delta^2-\delta\right)\sqn{z_g^k - z^k}
		\\&+
		\left(\gamma\nu^{-2}\left(1 + 6\chi\right) - \frac{\nu^{-1}}{2\sigma_2\chi}\right)\sqn{y_g^k+z_g^k}_\mP
		+
		\left(\frac{\beta\sigma_2^2}{4} + 	2\nu^{-1}\sigma_2 - \frac{1}{\theta}\right)\E{\sqn{y^{k+1} - y^k}}
		\\&-
		2\E{\<x^{k+1} - x^*, y^{k+1} - y^*>}
		+
        \bb_F(x_g^k, x^*)
        \\&+
		\frac{\oL}{Lb}\left(\bb_F(\omega^k, x^*) - \bb_F(x_g^k, x^*) - \<\g F(x_g^k) - \g F(x^*) -\nu x_g^k + \nu x^*, \omega^k - x_g^k>\right).
	\end{align*}
	Using $\delta$ definition \eqref{scary:delta} we get
	\begin{align*}
		\MoveEqLeft[4]\left(\frac{1}{\theta} + \frac{\beta}{2}\right)\E{\sqn{y^{k+1} - y^*}}
		+
		\frac{\beta}{2\sigma_2}\E{\sqn{y_f^{k+1} - y^*}}
		+
		\frac{1}{\gamma}\sqn{\z^{k+1} - z^*}
		+
		\frac{4}{3\gamma}\sqn{m^{k+1}}_\mP
		\\&\leq
		\left(\frac{1}{\gamma} - \delta\right)\sqn{\z^k - z^*}
		+
		\left(1-(4\chi)^{-1}+\frac{3\gamma\delta}{2}\right)\frac{4}{3\gamma}\sqn{m^k}_\mP
		+
		\frac{1}{\theta}\sqn{y^k - y^*}
		+
		\frac{\beta(1-\sigma_2/2)}{2\sigma_2}\sqn{y_f^k - y^*}
		\\&+
		\frac{\nu^{-1}(1-\sigma_2/2)}{\sigma_2}\sqn{y_f^k + z_f^k - (y^*+z^*)}
		-
		\frac{\nu^{-1}}{\sigma_2}\E{\sqn{y_f^{k+1} + z_f^{k+1} - (y^*+z^*)}}
		\\&+
		\left(\gamma\nu^{-2}\left(1 + 6\chi\right) - \frac{\nu^{-1}}{2\sigma_2\chi}\right)\sqn{y_g^k+z_g^k}_\mP
		+
		\left(\frac{\beta\sigma_2^2}{4} + 	2\nu^{-1}\sigma_2 - \frac{1}{\theta}\right)\E{\sqn{y^{k+1} - y^k}}
		\\&+
		\left(2\gamma\delta^2-\delta\right)\sqn{z_g^k - z^k}
		-
		2\E{\<x^{k+1} - x^*, y^{k+1} - y^*>}
        +
        \bb_F(x_g^k, x^*)
		\\&+
		\frac{\oL}{Lb}\left(\bb_F(\omega^k, x^*) - \bb_F(x_g^k, x^*) - \<\g F(x_g^k) - \g F(x^*) -\nu x_g^k + \nu x^*, \omega^k - x_g^k>\right).
	\end{align*}
	Using $\gamma$ definition \eqref{scary:gamma} we get
	\begin{align*}
		\MoveEqLeft[4]\left(\frac{1}{\theta} + \frac{\beta}{2}\right)\E{\sqn{y^{k+1} - y^*}}
		+
		\frac{\beta}{2\sigma_2}\E{\sqn{y_f^{k+1} - y^*}}
		+
		\frac{1}{\gamma}\sqn{\z^{k+1} - z^*}
		+
		\frac{4}{3\gamma}\sqn{m^{k+1}}_\mP
		\\&\leq
		\left(\frac{1}{\gamma} - \delta\right)\sqn{\z^k - z^*}
		+
		\left(1-(4\chi)^{-1}+\frac{3\gamma\delta}{2}\right)\frac{4}{3\gamma}\sqn{m^k}_\mP
		+
		\frac{1}{\theta}\sqn{y^k - y^*}
		+
		\frac{\beta(1-\sigma_2/2)}{2\sigma_2}\sqn{y_f^k - y^*}
		\\&+
		\frac{\nu^{-1}(1-\sigma_2/2)}{\sigma_2}\sqn{y_f^k + z_f^k - (y^*+z^*)}
		-
		\frac{\nu^{-1}}{\sigma_2}\E{\sqn{y_f^{k+1} + z_f^{k+1} - (y^*+z^*)}}
		\\&+
		\left(\frac{\beta\sigma_2^2}{4} + 	2\nu^{-1}\sigma_2 - \frac{1}{\theta}\right)\E{\sqn{y^{k+1} - y^k}}
		+
		\left(2\gamma\delta^2-\delta\right)\sqn{z_g^k - z^k}
		\\&-
		2\E{\<x^{k+1} - x^*, y^{k+1} - y^*>}
		+
        \bb_F(x_g^k, x^*)
        \\&+
		\frac{\oL}{Lb}\left(\bb_F(\omega^k, x^*) - \bb_F(x_g^k, x^*) - \<\g F(x_g^k) - \g F(x^*) -\nu x_g^k + \nu x^*, \omega^k - x_g^k>\right).
	\end{align*}
	Using $\theta$ definition together with \eqref{scary:nu}, \eqref{scary:beta} and \eqref{scary:sigma2} gives
	\begin{align*}
		\MoveEqLeft[4]\left(\frac{1}{\theta} + \frac{\beta}{2}\right)\E{\sqn{y^{k+1} - y^*}}
		+
		\frac{\beta}{2\sigma_2}\E{\sqn{y_f^{k+1} - y^*}}
		+
		\frac{1}{\gamma}\sqn{\z^{k+1} - z^*}
		+
		\frac{4}{3\gamma}\sqn{m^{k+1}}_\mP
		\\&\leq
		\left(\frac{1}{\gamma} - \delta\right)\sqn{\z^k - z^*}
		+
		\left(1-(4\chi)^{-1}+\frac{3\gamma\delta}{2}\right)\frac{4}{3\gamma}\sqn{m^k}_\mP
		+
		\frac{1}{\theta}\sqn{y^k - y^*}
		+
		\frac{\beta(1-\sigma_2/2)}{2\sigma_2}\sqn{y_f^k - y^*}
		\\&+
		\frac{\nu^{-1}(1-\sigma_2/2)}{\sigma_2}\sqn{y_f^k + z_f^k - (y^*+z^*)}
		-
		\frac{\nu^{-1}}{\sigma_2}\E{\sqn{y_f^{k+1} + z_f^{k+1} - (y^*+z^*)}}
		\\&+
		\left(2\gamma\delta^2-\delta\right)\sqn{z_g^k - z^k}
		-
		2\E{\<x^{k+1} - x^*, y^{k+1} - y^*>}
        +
        \bb_F(x_g^k, x^*)
		\\&+
		\frac{\oL}{Lb}\left(\bb_F(\omega^k, x^*) - \bb_F(x_g^k, x^*) - \<\g F(x_g^k) - \g F(x^*) -\nu x_g^k + \nu x^*, \omega^k - x_g^k>\right).
	\end{align*}
	Using $\gamma$ definition \eqref{scary:gamma} and $\delta$ definition \eqref{scary:delta} we get
	\begin{align*}
		\MoveEqLeft[4]\left(\frac{1}{\theta} + \frac{\beta}{2}\right)\E{\sqn{y^{k+1} - y^*}}
		+
		\frac{\beta}{2\sigma_2}\E{\sqn{y_f^{k+1} - y^*}}
		+
		\frac{1}{\gamma}\sqn{\z^{k+1} - z^*}
		+
		\frac{4}{3\gamma}\sqn{m^{k+1}}_\mP
		\\&\leq
		\left(\frac{1}{\gamma} - \delta\right)\sqn{\z^k - z^*}
		+
		\left(1-(8\chi)^{-1}\right)\frac{4}{3\gamma}\sqn{m^k}_\mP
		+
		\frac{1}{\theta}\sqn{y^k - y^*}
		+
		\frac{\beta(1-\sigma_2/2)}{2\sigma_2}\sqn{y_f^k - y^*}
		\\&+
		\frac{\nu^{-1}(1-\sigma_2/2)}{\sigma_2}\sqn{y_f^k + z_f^k - (y^*+z^*)}
		-
		\frac{\nu^{-1}}{\sigma_2}\E{\sqn{y_f^{k+1} + z_f^{k+1} - (y^*+z^*)}}
		\\&-
		2\E{\<x^{k+1} - x^*, y^{k+1} - y^*>}
        +
        \bb_F(x_g^k, x^*)
        \\&+
		\frac{\oL}{Lb}\left(\bb_F(\omega^k, x^*) - \bb_F(x_g^k, x^*) - \<\g F(x_g^k) - \g F(x^*) -\nu x_g^k + \nu x^*, \omega^k - x_g^k>\right).
	\end{align*}
	After rearranging and using $\Psi_{yz}^k$ definition \eqref{scary:Psi_yz} we get
	\begin{align*}
		\E{\Psi_{yz}^{k+1}} 
		&\leq
		\max\left\{(1 + \theta\beta/2)^{-1}, (1-\gamma\delta), (1-\sigma_2/2), (1-(8\chi)^{-1})\right\}\Psi_{yz}^k
		\\&-
		2\E{\<x^{k+1} - x^*, y^{k+1} - y^*>}
		+
        \bb_F(x_g^k, x^*)
        \\&+
		\frac{\oL}{Lb}\left(\bb_F(\omega^k, x^*) - \bb_F(x_g^k, x^*) - \<\g F(x_g^k) - \g F(x^*) -\nu x_g^k + \nu x^*, \omega^k - x_g^k>\right)
		\\&\leq
		\left(1 - \frac{\sqrt{\mu}}{32\chi\sqrt{L}}\right)\Psi_{yz}^k
		\\&-
		2\E{\<x^{k+1} - x^*, y^{k+1} - y^*>}
        +
        \bb_F(x_g^k, x^*)
		\\&+
		\frac{\oL}{Lb}\left(\bb_F(\omega^k, x^*) - \bb_F(x_g^k, x^*) - \<\g F(x_g^k) - \g F(x^*) -\nu x_g^k + \nu x^*, \omega^k - x_g^k>\right).
	\end{align*}
\end{proof}

\begin{lemma}
	Let $\lambda$ be defined as follows:
	\begin{equation}\label{scary:lambda}
		\lambda = \frac{n}{b}\left(\frac{1}{2}+\frac{\oL}{Lb\tau_1}\right).
	\end{equation}
	Let $p_1$ be defined as follows:
	\begin{equation}\label{scary:p1}
		p_1 = \frac{1}{2\lambda}.
	\end{equation}
	Let $p_2$ be defined as follows:
	\begin{equation}\label{scary:p2}
		p_2 = \frac{\oL}{\lambda Lb\tau_1}.
	\end{equation}
	Then the following inequality holds:
	\begin{equation}
		\begin{split}
			\E{\Psi_x^k + \Psi_{yz}^k+\lambda\bb_F(\omega^{k+1}, x^*)} \leq\left(1 - \frac{1}{32}\min\left\{\frac{b}{n}, b\sqrt{\frac{\mu}{nL}}, \frac{b^2 L}{n \oL}\sqrt{\frac{\mu}{L}}, \frac{\sqrt{\mu}}{\chi\sqrt{L}}\right\}\right)(\Psi_x^0 + \Psi_{yz}^0+\lambda\bb_F(\omega^k, x^*)).
		\end{split}
	\end{equation}
\end{lemma}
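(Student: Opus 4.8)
The plan is to fuse the three one-step estimates already available — inequality \eqref{scary:eq:x} for $\Psi_x$, inequality \eqref{scary:eq:yz} for $\Psi_{yz}$, and the law of $\omega^{k+1}$ from Line~\ref{scary:line_omega} — into a single contraction for the joint Lyapunov function $V^k := \Psi_x^k + \Psi_{yz}^k + \lambda\,\bb_F(\omega^k,x^*)$, and then to iterate.

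\emph{Step 1: add \eqref{scary:eq:x} and \eqref{scary:eq:yz}.} The coupling terms $+2\E{\<y^{k+1}-y^*,x^{k+1}-x^*>}$ and $-2\E{\<x^{k+1}-x^*,y^{k+1}-y^*>}$ cancel, and so do the two directional-derivative terms $\pm\tfrac{\oL}{Lb}\<\g F(x_g^k)-\g F(x^*)-\nu x_g^k+\nu x^*,\omega^k-x_g^k>$. Collecting the Bregman terms (the $\pm\bb_F(x_g^k,x^*)$ from the two lemmas cancel, leaving only the $1/\tau_1$-contributions combined), the coefficient of $\bb_F(\omega^k,x^*)$ becomes $\tfrac{\oL}{Lb\tau_1}$, that of $\bb_F(x_g^k,x^*)$ becomes $-\tfrac{\oL}{Lb\tau_1}$, and that of $\bb_F(x_f^k,x^*)$ becomes $-\tfrac12$. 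Hence
\begin{align*}
\E{\Psi_x^{k+1}+\Psi_{yz}^{k+1}}
&\le (1-c_1)\Psi_x^k + (1-c_2)\Psi_{yz}^k \\
&\quad + \tfrac{\oL}{Lb\tau_1}\bb_F(\omega^k,x^*) - \tfrac{\oL}{Lb\tau_1}\bb_F(x_g^k,x^*) - \tfrac12\bb_F(x_f^k,x^*),
\end{align*}
with $c_1 = \tfrac1{20}\min\{\sqrt{\mu/L},\,b\sqrt{\mu/(nL)}\}$ from \eqref{scary:eq:x} and $c_2 = \tfrac{\sqrt\mu}{32\chi\sqrt L}$ from \eqref{scary:eq:yz}.

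\emph{Step 2: incorporate $\omega^{k+1}$.} Add $\lambda\,\bb_F(\omega^{k+1},x^*)$ to both sides and take the conditional expectation over the coin flip of Line~\ref{scary:line_omega}, so $\E{\bb_F(\omega^{k+1},x^*)} = p_1\bb_F(x_f^k,x^*)+p_2\bb_F(x_g^k,x^*)+(1-p_1-p_2)\bb_F(\omega^k,x^*)$. The definitions \eqref{scary:lambda}, \eqref{scary:p1}, \eqref{scary:p2} are calibrated precisely so that $\lambda p_1 = \tfrac12$ and $\lambda p_2 = \tfrac{\oL}{Lb\tau_1}$; hence the $\bb_F(x_f^k,x^*)$ and $\bb_F(x_g^k,x^*)$ terms cancel exactly, and the surviving coefficient of $\bb_F(\omega^k,x^*)$ is $\lambda(1-p_1-p_2)+\tfrac{\oL}{Lb\tau_1}=\lambda-\tfrac12=(1-p_1)\lambda$. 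Therefore
\begin{equation*}
\E{V^{k+1}} \le (1-c_1)\Psi_x^k + (1-c_2)\Psi_{yz}^k + (1-p_1)\lambda\,\bb_F(\omega^k,x^*) \le (1-c)\,V^k,
\end{equation*}
with $c := \min\{c_1,c_2,p_1\}$ and $p_1 = \tfrac1{2\lambda}$. Taking total expectation and iterating over $k$ using the tower property gives the stated geometric decay of $\E{V^k}$.

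\emph{Step 3: lower-bound the rate.} It remains to check $c \ge \tfrac1{32}\min\{\tfrac bn,\ b\sqrt{\tfrac{\mu}{nL}},\ \tfrac{b^2L}{n\oL}\sqrt{\tfrac{\mu}{L}},\ \tfrac{\sqrt\mu}{\chi\sqrt L}\}$. Since $\chi\ge1$ implies $\tfrac{\sqrt\mu}{\chi\sqrt L}\le\sqrt{\mu/L}$, the factor $c_1\ge\tfrac1{20}\min\{\tfrac{\sqrt\mu}{\chi\sqrt L},\,b\sqrt{\mu/(nL)}\}$ already dominates the claimed quantity, and $c_2$ is exactly $\tfrac1{32}$ of its fourth term. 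The only genuine bookkeeping is for $p_1=\tfrac1{2\lambda}=\tfrac{b}{n(1+2\oL/(Lb\tau_1))}$: one uses $\tau_0=\tfrac{\oL}{2Lb}\le\tfrac12$ (where the hypothesis $b\ge\oL/L$ enters), $\tau_1=(1-\tau_0)(1/\tau_2+1/2)^{-1}\ge\tfrac25\tau_2$ (from $\tau_2\le\tfrac12$), and $\tau_2\ge\min\{\tfrac12,\sqrt{\mu/L}\}$, splitting according to whether $2\oL/(Lb\tau_1)\le1$ — in which case $p_1\ge\tfrac{b}{2n}$ — or $>1$ — in which case $p_1\ge\tfrac{b^2L\tau_1}{4n\oL}\ge\tfrac{b^2L}{10n\oL}\sqrt{\mu/L}$ (using $\mu\le L$ to handle the degenerate sub-case $\sqrt{\mu/L}>\tfrac12$). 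This last regime split, carried out keeping enough slack in all constants so that the overall factor is no smaller than $\tfrac1{32}$, is the main obstacle; everything else is the algebraic cancellations of Steps 1–2.
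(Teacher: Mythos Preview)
Your proposal is correct and follows essentially the same approach as the paper: sum \eqref{scary:eq:x} and \eqref{scary:eq:yz} (the cross terms and directional-derivative terms cancel exactly as you describe), add $\lambda\,\E{\bb_F(\omega^{k+1},x^*)}$ using the coin-flip law, exploit $\lambda p_1=\tfrac12$ and $\lambda p_2=\tfrac{\oL}{Lb\tau_1}$ to eliminate the $\bb_F(x_f^k,\cdot)$ and $\bb_F(x_g^k,\cdot)$ residuals, and finally lower-bound $p_1$ via $\tau_1\ge\tfrac25\tau_2$ and the case split on $\tau_2$. The paper does exactly this, with the same constants and the same regime split for $p_1$; your Step~3 sketch matches the paper's estimate $p_1\ge\min\{\tfrac{b}{20n},\tfrac{b^2L}{10n\oL}\sqrt{\mu/L}\}$ line for line.
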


\begin{proof}
	Combining \eqref{scary:eq:x} and \eqref{scary:eq:yz} gives
	\begin{equation}
		\begin{split}\label{scary:lyapunov_part_1}
			\E{\Psi_x^{k+1} + \Psi_{yz}^{k+1}}
			&\leq
			\left(1 - \frac{1}{20}\min\left\{\sqrt{\frac{\mu}{L}}, b\sqrt{\frac{\mu}{nL}}\right\}\right)\Psi_x^k
			+
			\left(1 - \frac{\sqrt{\mu}}{32\chi\sqrt{L}}\right)\Psi_{yz}^k
			\\&-
			\frac{\oL}{Lb\tau_1}\bb_F(x_g^k, x^*)
			+
			\frac{\oL}{Lb\tau_1}\bb_F(\omega^k, x^*)
			-
			\frac{1}{2}\bb_F(x_f^k, x^*)
			\\&\leq
			\left(1 - \frac{1}{32}\min\left\{b\sqrt{\frac{\mu}{nL}}, \frac{\sqrt{\mu}}{\chi\sqrt{L}}\right\}\right)(\Psi_x^k + \Psi_{yz}^k)
			\\&-
			\frac{\oL}{Lb\tau_1}\bb_F(x_g^k, x^*)
			+
			\frac{\oL}{Lb\tau_1}\bb_F(\omega^k, x^*)
			-
			\frac{1}{2}\bb_F(x_f^k, x^*).
		\end{split}
	\end{equation}
	Using \eqref{scary:line_omega} we get the following inequality:
	\begin{equation}\label{scary:lyapunov_part_2}
		\E{\bb_F(\omega^{k+1}, x^*)}
		\leq
		p_1\bb_F(x_f^k, x^*) + p_2\bb_F(x_g^k, x^*)+(1-p_1-p_2)\bb_F(\omega^k, x^*).
	\end{equation}
	Multiplying \eqref{scary:lyapunov_part_2} on $\lambda$ and combining with \eqref{scary:lyapunov_part_1} we get
	\begin{align*}
		\E{\Psi_x^{k+1} + \Psi_{yz}^{k+1}+\lambda\bb_F(\omega^{k+1}, x^*)}
		&\leq
		\left(1 - \frac{1}{32}\min\left\{b\sqrt{\frac{\mu}{nL}}, \frac{\sqrt{\mu}}{\chi\sqrt{L}}\right\}\right)(\Psi_x^k + \Psi_{yz}^k)
		+
		\lambda(1-p_1)\bb_F(\omega^k, x^*).
	\end{align*}
	Estimating $p_1$, using $\tau_1$ and $\tau_0$ definitions \eqref{scary:tau1}, \eqref{scary:tau0}
	\begin{align*}
		p_1
		&=
		\frac{b}{n}\left(2\left(\frac{1}{2}+\frac{\oL}{L b\tau_1}\right)\right)^{-1}
        =
        \frac{b}{n}\left(1+\frac{2\oL}{L b\tau_1}\right)^{-1}
        \geq \frac{b}{2n}\min\left\{1, \left(\frac{2\oL}{L b\tau_1}\right)^{-1}\right\}
        =
        \min\left\{\frac{b}{2n}, \frac{b^2 L\tau_1}{4n\oL}\right\}
        \\&\geq
        \min\left\{\frac{b}{2n}, \frac{b^2 L\tau_2}{10n\oL}\right\}
        =
        \min\left\{\frac{b}{2n}, \frac{b^2 L}{10n\oL}\min\left\{\frac{1}{2}, \max\left\{1, \frac{\sqrt{n}}{b}\right\}\sqrt{\frac{\mu}{L}}\right\}\right\}
        \\&\geq
        \min\left\{\frac{b}{2n}, \frac{b^2 L}{20n\oL}, \frac{b^2 L}{10n\oL}\max\left\{1, \frac{\sqrt{n}}{b}\right\}\sqrt{\frac{\mu}{L}}\right\}
        \geq
        \min\left\{\frac{b}{20n}, \frac{b^2 L}{10n \oL}\sqrt{\frac{\mu}{L}}\right\}.
	\end{align*}
	Therefore we conclude
	\begin{align*}
		\E{\Psi_x^{k+1} + \Psi_{yz}^{k+1}+\lambda\bb_F(\omega^{k+1}, x^*)}
		&\leq
		\left(1 - \frac{1}{32}\min\left\{\frac{b}{n}, b\sqrt{\frac{\mu}{nL}}, \frac{b^2 L}{n \oL}\sqrt{\frac{\mu}{L}}, \frac{\sqrt{\mu}}{\chi\sqrt{L}}\right\}\right)(\Psi_x^k + \Psi_{yz}^k+\lambda\bb_F(\omega^k, x^*)).
	\end{align*}

	This implies
	\begin{align*}
		\E{\Psi_x^k + \Psi_{yz}^k+\lambda\bb_F(\omega^k, x^*)}
		&\leq
		\left(1 - \frac{1}{32}\min\left\{\frac{b}{n}, b\sqrt{\frac{\mu}{nL}}, \frac{b^2 L}{n \oL}\sqrt{\frac{\mu}{L}}, \frac{\sqrt{\mu}}{\chi\sqrt{L}}\right\}\right)^k(\Psi_x^0 + \Psi_{yz}^0+\lambda\bb_F(x^0, x^*)).
	\end{align*}
	Using $\Psi_x^k$ definition \eqref{scary:Psi_x} we get
	\begin{align*}
		\E{\sqn{x^k - x^*}} &\leq \eta\E{\Psi_x^k} \leq \eta \E{\Psi_x^k + \Psi_{yz}^k + \lambda\bb_F(\omega^k, x^*)} 
		\\&\leq
		\left(1 - \frac{1}{32}\min\left\{\frac{b}{n}, b\sqrt{\frac{\mu}{nL}}, \frac{b^2 L}{n \oL}\sqrt{\frac{\mu}{L}}, \frac{\sqrt{\mu}}{\chi\sqrt{L}}\right\}\right)^k\eta(\Psi_x^0 + \Psi_{yz}^0+\lambda\bb_F(\omega^0, x^*)).
	\end{align*}
	Choosing $C = \eta(\Psi_x^0 + \Psi_{yz}^0 + \lambda\bb_F(\omega^k, x^*))$ and using the number of iterations
	\begin{equation*}
		k = 32\max\left\{\frac{n}{b}, \frac{\sqrt{n}}{b}\sqrt{\frac{L}{\mu}}, \frac{n\oL}{b^2 L}\sqrt{\frac{L}{\mu}}, \chi\sqrt{\frac{L}{\mu}}\right\}\log \frac{C}{\varepsilon} = \cO \left( \max\left\{\frac{n}{b}, \frac{\sqrt{n}}{b}\sqrt{\frac{L}{\mu}}, \frac{n\oL}{b^2 L}\sqrt{\frac{L}{\mu}}, \chi\sqrt{\frac{L}{\mu}}\right\}\log \frac{1}{\varepsilon}\right)
	\end{equation*}
	we get
	\begin{equation*}
		\sqn{x^k - x^*} \leq \epsilon.
	\end{equation*}
	Therefore the number of iterations of Algorithm~\eqref{scary:alg} is bounded by
	\begin{equation*}
		k = \cO \left(\left(\frac{n}{b}+\frac{\sqrt{n}}{b}\sqrt{\frac{L}{\mu}}+\frac{n\oL}{b^2 L}\sqrt{\frac{L}{\mu}}+\chi\sqrt{\frac{L}{\mu}}\right)\log \frac{1}{\epsilon}\right),
	\end{equation*}
	which concludes the proof.
\end{proof}

Let's prove the Corollary~\ref{corollary:adom+_conv}.
\begin{proof}
    The choice of the number of communication iterations $\sim \chi$ per algorithm iteration and a specific choice of $b=\max\{\sqrt{n\oL/L}, n\sqrt{\mu/L}\}$ provides the following upper bound on the number of algorithm iterations:
    \begin{equation*}
        N = \cO \left(\sqrt{\frac{L}{\mu}}\log \frac{1}{\epsilon}\right).
    \end{equation*}
    From this, it immediately follows that the upper bound on the number of communications is as follows:
    \begin{equation*}
        \cO \left(\chi\sqrt{\frac{L}{\mu}}\log \frac{1}{\epsilon}\right).
    \end{equation*}

    Now, let's estimate the number of oracle calls at each node. It is not difficult to show the following upper bound:
    \begin{equation*}
        Nb = \cO \left(\left(n+\sqrt{n}\sqrt{\frac{L}{\mu}}+b\sqrt{\frac{L}{\mu}}+\frac{n\oL}{b L}\sqrt{\frac{L}{\mu}}\right)\log \frac{1}{\epsilon}\right) = \cO \left(\left(n+\sqrt{n}\sqrt{\frac{\oL}{\mu}}\right)\log \frac{1}{\epsilon}\right),
    \end{equation*}
    which completes the proof.
    
\end{proof}

\section{Proof of Theorem~\ref{thm:lower_strong_conv}}\label{appendix:B}
As our graph counterexample, we will use the graph from \citep{metelev2024decentralized} because it allows us to obtain a lower bound not only in the setting of ``changing graphs'' but also in the setting of ``slowly changing graphs'', which will be a good addition.

Let's define $T_{a, b}$ as a graph consisting of two ``stars'' with sizes $a+1$ and $b+1$, whose centers are connected to an isolated vertex. In total, the graph will have $a+b+3$ vertices.

Let's say the left part of the graph $\mathcal{P}_1$ is the set of $a + 1$ vertices of the first star, and the right part $\mathcal{P}_2$ is correspondingly the set of $b + 1$ vertices of the second star. The middle vertex $v_m$ is the vertex connected to the centers $v_l$ and $v_r$ of the left and right stars, respectively.

If $v \in \mathcal{P}_1$, we define the ``hop to the right'' operation as follows: remove the edge $(v_l, v_m)$ and add the edge $(v, v_r)$. As a result, $v_m$ ceases to be the middle vertex, being replaced by the vertex $v$. The operation ``hop to the left'' is defined in the same way.

Now, let's describe the sequence of graphs that will make up the changing network. The first graph will be of the form $T_{0, m-3}$, followed by a series of ``hops to the left'', which increase the left part $\mathcal{P}_1$ of the graph and decrease the right. This will continue until the graph $T_{m-3, 0}$ appears. After this, a series of ``hops to the right'' occur until the network returns to its original form. Then, the cycle repeats.

\begin{lemma}
    For this sequence of graphs, there exists a corresponding sequence of positive weights \((A_k)_{k=0}^{\infty}\) and a sequence of Laplacian matrices \((W(k))_{k=0}^{\infty}\) for these weighted graphs, such that it satisfies \ref{assum:gossip_matrix_sequence} with
    \begin{equation}\label{scary_lower:chi_upper}
        \chi \leq 8m.
    \end{equation}
\end{lemma}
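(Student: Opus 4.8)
The goal is to construct weights $A_k$ on the sequence of graphs $T_{a,b}$ described above and corresponding (normalized) Laplacians $W(k)$ so that Assumption~\ref{assum:gossip_matrix_sequence} holds with $\chi \leq 8m$. I would start by recalling that for a single connected weighted graph $\mathcal{G}$, the normalized Laplacian $W = \mathbf{L}(\mathcal{G})/\lambda_{\max}(\mathbf{L}(\mathcal{G}))$ automatically satisfies conditions 1--3 of the assumption, and condition 4 holds with $\chi = \lambda_{\max}(\mathbf{L})/\lambda_{\min}^+(\mathbf{L})$ — this is stated in the excerpt just after Assumption~\ref{assum:gossip_matrix_sequence}. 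So the whole task reduces to bounding the condition number $\lambda_{\max}/\lambda_{\min}^+$ of each weighted graph in the sequence uniformly by $8m$.

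**Key steps.** First, I would fix the weights: each graph $T_{a,b}$ is a union of two stars plus two bridge edges through the middle vertex $v_m$. I would assign unit weight (or a carefully chosen constant weight) to the star edges and possibly a larger weight to the two bridge edges $(v_l,v_m)$ and $(v_m,v_r)$; the sequence $(A_k)$ records which constant is used at step $k$ (the point of making weights time-dependent is to keep the bridge edge from becoming a bottleneck as the "hop" operations move vertices across). Second, I would upper-bound $\lambda_{\max}(\mathbf{L})$: since $\mathbf{L} = \sum_{(u,v)\in\mathcal{E}} A_{uv}(e_u-e_v)(e_u-e_v)^\top$ and each term has operator norm $2A_{uv}$, and since $T_{a,b}$ has maximum degree at most $\max(a,b)+2 \leq m$, a Gershgorin / degree bound gives $\lambda_{\max}(\mathbf{L}) \leq 2 d_{\max} \cdot (\text{max weight}) = O(m)$. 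Third — the real content — I would lower-bound $\lambda_{\min}^+(\mathbf{L})$, i.e., the spectral gap, by $\Omega(1)$ (or more precisely by a constant independent of $m$, so the ratio is $O(m)$). For this I would use the standard variational characterization $\lambda_{\min}^+(\mathbf{L}) = \min_{x \perp \mathbf{1}} \frac{x^\top \mathbf{L} x}{\|x\|^2} = \min_{x\perp\mathbf 1}\frac{\sum_{(u,v)}A_{uv}(x_u-x_v)^2}{\sum_u x_u^2}$, and bound it via a Poincaré-type / path argument: for any unit vector $x \perp \mathbf{1}$, route the "flow" between each pair of vertices along the unique path in the tree $T_{a,b}$ (which has length at most $4$ — leaf to center to $v_m$ to center to leaf), so the congestion of any edge is controlled, giving $\lambda_{\min}^+ \gtrsim 1/(\text{path length}\cdot\text{congestion})$. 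The key observation making this work is that $T_{a,b}$ has diameter $4$, so paths are short; the congestion on the bridge edges is $O(m)$ (all cross-cluster paths use them), which is exactly why those edges get a weight boosted by a factor $\sim m$ in the choice of $A_k$, cancelling the congestion.

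**Main obstacle.** The delicate part is the uniformity of the spectral-gap bound over the entire infinite sequence — in particular handling the "degenerate" endpoints $T_{0,m-3}$ and $T_{m-3,0}$ where one star is empty (so $v_m$ is a leaf attached to a single big star) together with the intermediate graphs, all with one consistent weighting rule $(A_k)$, while simultaneously keeping $\lambda_{\max} = O(m)$ — boosting the bridge weights helps the gap but inflates $\lambda_{\max}$, so the two bounds must be balanced to land exactly at $O(m)$. I would handle this by a unified Poincaré argument parametrized by $(a,b)$ with $a+b = m-3$, checking that the flow-congestion bound degrades by at most a constant factor across the whole family, and then absorbing all constants into the stated bound $\chi \leq 8m$. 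A secondary, more routine check is verifying ranges/kernel conditions 1--3, which follow formally from $W(k)$ being a normalized graph Laplacian of a connected graph on the fixed vertex set $\mathcal{V}$.
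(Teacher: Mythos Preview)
The paper does not actually prove this lemma: its entire proof is the single sentence ``This is a direct consequence of Lemma~8 from \cite{metelev2024decentralized}.'' All content is deferred to that external reference. Your plan, by contrast, is a self-contained spectral argument, and it is essentially correct: assign bridge-edge weight $\Theta(m)$ and star-edge weight $1$, set $W(k)=\mathbf L_k/\lambda_{\max}(\mathbf L_k)$, bound $\lambda_{\max}=O(m)$ via the weighted-degree Gershgorin bound, and bound $\lambda_{\min}^+=\Omega(1)$ via the canonical-paths inequality
\[
\lambda_{\min}^+(\mathbf L)\;\ge\;\frac{m}{\displaystyle\max_e\frac{1}{w_e}\sum_{\gamma_{st}\ni e}|\gamma_{st}|},
\]
giving $\chi=O(m)$ uniformly over the whole family $T_{a,b}$, $a+b=m-3$.

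One quantitative slip to fix: the number of cross-cluster pairs routed through a bridge edge is $(a{+}1)(b{+}2)=\Theta(m^2)$ in the balanced case, not $O(m)$ as you write. After boosting the bridge weight by $m$ this leaves a per-edge load of $\Theta(m)$ on the bridge --- matching, not beating, the $\Theta(m)$ load on each star edge --- so the path bound still delivers $\lambda_{\min}^+=\Omega(1)$ and your conclusion survives; but your phrase ``cancelling the congestion'' (to $O(1)$) is off by a factor of $m$. Also be aware that the crude path bound will give $\chi\le Cm$ for some absolute constant $C$, but landing exactly on the stated constant $8$ requires either a sharper inequality or the explicit eigenvalue computation for $T_{a,b}$, which is presumably what the cited reference does.
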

\begin{proof}
    This is a direct consequence of Lemma~8 from \citep{metelev2024decentralized}.
\end{proof}

Note that vertices $v_l$ and $v_r$ in the process of changing the network are always on the left and right parts, respectively. Denote by $\{g_i\}_{i=1}^m$:$y\in$ $\ell_2 \rightarrow \R$ the set of auxiliary functions corresponding to the vertices:

\begin{equation}\label{scary_lower:func}
g_i(y)=\begin{cases}
 \frac{\mu}{2}\sqn{y}+\frac{(L-\mu)}{4}\left[(y_1-1)^2+\sum_{k=1}^{\infty}(y_{2k}-y_{2k+1})^2\right], & i=v_l,
 \\
 \frac{\mu}{2}\sqn{y}+\frac{(L-\mu)}{4}\sum_{k=1}^{\infty}(y_{2k-1}-y_{2k})^2, &  i=v_r,
 \\
 \frac{\mu}{2(m-2)}\sqn{y}, & i\not\in \{v_l,v_r\}.
\end{cases}
\end{equation}

Let's describe the local functions on the nodes: let $x \in \ell_2^n$, then define $f_{ij}(x) = g_i(x_j)$, where $x_j \in \ell_2$. Accordingly, it turns out that $f_{ij}: x \in \ell_2^n \rightarrow \mathbb{R}$, but its gradient affects only the $j$th subspace of $\ell_2^n$, in which $x_k = 0$ for $k \neq j$. Hence, $F_i(x) = \frac{1}{n}\sum_{j=1}^n g_i(x_j)$.

Such a structure allows achieving that the ``transfer'' of the gradient to the next dimension in each subspace occurs once every $\Omega(m) = \Omega(\chi)$ communication iterations.

The solution to this optimization problem will be the vector $(x^*, \ldots, x^*) \in \ell_2^n$, $x^* = (1, q, q^2, \ldots)\in\ell_2$, $q=\frac{\sqrt{\frac{2}{3}L/\mu+\frac{1}{3}}-1}{\sqrt{\frac{2}{3}L/\mu+1}+\frac{1}{3}}$ .

Let $(e_1, e_2, \ldots, e_n)$ be sets of vectors that form a basis in the space $\ell_2^n$. Let $x_{ij}$ denote the coordinates along a set of vectors $e_j$ on the variable on the $i$th node.

Following the ideas of \citep{hendrikx2021optimal}, consider the expression
\begin{align*}
    A \overset{\Delta}{=} \sum_{i=1}^m\sum_{j=1}^n\sqn{x_{ij} - x^*}.
\end{align*}

Let's define the quantities $k_j = \min\{k \in \N_0 | \forall l \geq k,\forall i\in\{1,\ldots,m\} \rightarrow x_{ijl} = 0\}$. Using this definition and the convexity of $q^{2x}$ we get
\begin{equation}\label{scary_lower:A}
    A \geq \frac{m}{1-q^2}\sum_{i=1}^n q^{2k_j} \geq \frac{nm}{1-q^2}q^{\frac{2}{n}\sum_{j=1}^n k_j}.
\end{equation}

Let $T_c$ and $T_s$ be the number of communication rounds and the number of oracle calls at node $v_l$, respectively. Between the network state $T_{0, m-3}$ and the next such state there are $2m-6$ communication iterations, during which two ``transfers'' of the gradient from an odd position to an even one cannot occur. Therefore we get
\begin{equation}\label{scary_lower:t_c}
    k_j \leq 1 + \frac{T_c}{m-3}.
\end{equation}

Note that each $j$ corresponds to at least $\lceil k_j/2\rceil$ oracle calls to the function $f_{ij}$ for $i=v_l$, hence we get
\begin{equation}\label{scary_lower:t_s}
    \sum_{j=1}^n k_j \leq 2T_s.
\end{equation}  
Using \eqref{scary_lower:A}, \eqref{scary_lower:t_c} and \eqref{scary_lower:t_s} we get
\begin{equation}
    A \geq \frac{nm}{1-q^2}\max\left\{\left(1 - \frac{2}{\sqrt{\frac{2}{3}L/\mu+\frac{1}{3}} + 1}\right)^{2 + 2t_c/(m-3)}, \left(1 - \frac{2}{\sqrt{\frac{2}{3}L/\mu+\frac{1}{3}} + 1}\right)^{4t_s/n}\right\}.
\end{equation}
Based on the form of the function we can conclude that $\kappa_s = \frac{nL}{\mu}=n\kappa_b$, then using $x^0=0$, $\sqn{x^0_{ij}-x^*_{ij}}=(1-q^2)^{-1}$ and \eqref{scary_lower:chi_upper} we get
\begin{align*}
    \frac{1}{nm}\sum_{i=1}^m\sum_{j=1}^n\frac{\sqn{x_{ij} - x^*}}{\sqn{x^0_{ij} - x^*}} \geq \max\left\{\left(1 - \frac{2}{\sqrt{\frac{2}{3}\kappa_b+\frac{1}{3}} + 1}\right)^{2 + 16t_c/(\chi-24)}, \left(1 - \frac{2n}{\sqrt{n}\sqrt{\frac{2}{3}\kappa_s+n/3} + n}\right)^{4t_s/n}\right\},
\end{align*}
which concludes the proof.

{
	\renewcommand{\E}{\mathbb{E}}

\section{Proofs for Algorithm~\ref{alg:yup}}\label{appendix:C}
Before we start, let us denote
\begin{align}
     \label{eq: mixing-matrix}
     \mM(k) = (\mI_m - \mW(k))\otimes \mI_d
\end{align}
and
\begin{align}
	\label{eq:rho_and_chi}
	\rho = \frac{1}{\chi}
\end{align}
for the convenient analysis. Moreover, we need to introduce some definitions as
\begin{align*}
    \bar\vx^k &= \frac{1}{m} (\vone^\top_m \otimes \mI_d) x^k,\\
    \bar\vv^k &= \frac{1}{m} (\vone^\top_m \otimes \mI_d) v^k,\\
    S^k &= (S_1^k, \ldots, S_m^k),\\
    \nabla_{S^k}F(x^k) &= \big(\nabla_{S_1^k}F_1(x_1^k), \ldots, \nabla_{S_m^k}F_m(x_m^k)\big) \in \mathbb{R}^{md},\\
    \nabla_{S_i^k}F_i(x_i^k) &= \frac{1}{b}\sum\limits_{j \in S^k_i} \nabla f_{ij}(x_i^{k})
\end{align*}
Also we need to formulate some useful propositions:
\begin{proposition}
	\label{lem:gt}
	If $\bar\vv^0 = \frac{1}{m} (\vone^\top_m \otimes \mI_d) \mY^0$, then for any $k \ge 1$, according to \cref{alg:yup}, we get
	\begin{equation}\label{eq:avg_gradient}
		\bar\vv^k = \frac{1}{m}(\vone^\top_m \otimes \mI_d)\mY^k,
	\end{equation} 
	and
	\begin{equation}\label{eq:avg_model}
		\bar\vx^{k+1} = \bar\vx^k - \frac{\eta}{m}(\vone^\top_m \otimes \mI_d)\mY^k.
	\end{equation}
\end{proposition}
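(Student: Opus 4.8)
The plan is to prove \eqref{eq:avg_gradient} by induction on $k$ and then read off \eqref{eq:avg_model} as an immediate consequence, the one structural fact carrying the whole argument being that each gossip matrix annihilates the all-ones vector when acting from the left. Indeed, part~3 of Assumption~\ref{assum:gossip_matrix_sequence} states $\range \mW(k) \subset \{(x_1,\dots,x_m) : \sum_{i} x_i = 0\}$, which is exactly $\vone_m^\top \mW(k) = 0$, hence $\vone_m^\top(\mI_m - \mW(k)) = \vone_m^\top$. Tensoring with $\mI_d$, recalling $\mM(k) = (\mI_m - \mW(k))\otimes\mI_d$ from \eqref{eq: mixing-matrix}, and using the mixed-product rule $(\vone_m^\top\otimes\mI_d)\big((\mI_m-\mW(k))\otimes\mI_d\big) = \big(\vone_m^\top(\mI_m-\mW(k))\big)\otimes\mI_d$, I would record the key identity
\[
(\vone_m^\top\otimes\mI_d)\,\mM(k) = \vone_m^\top\otimes\mI_d ,
\]
i.e. the averaging functional $\tfrac1m(\vone_m^\top\otimes\mI_d)$ is a left eigenvector of every mixing operator with eigenvalue $1$.

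With this in hand I would first handle \eqref{eq:avg_gradient}. The base case $k=0$ is the standing hypothesis $\bar\vv^0 = \tfrac1m(\vone_m^\top\otimes\mI_d)\mY^0$ (which, incidentally, is automatically met by the initialization $\mV^0 = \tfrac1m(\vone_m\vone_m^\top\otimes\mI_d)\mY^0$ of Algorithm~\ref{alg:yup}). For the inductive step, apply $\tfrac1m(\vone_m^\top\otimes\mI_d)$ to the recursion $\mV^{k+1} = \mM(k)\mV^k + \mY^{k+1}-\mY^k$: the key identity turns the first term into $\bar\vv^k$, giving $\bar\vv^{k+1} = \bar\vv^k + \tfrac1m(\vone_m^\top\otimes\mI_d)(\mY^{k+1}-\mY^k)$, and substituting the inductive hypothesis $\bar\vv^k = \tfrac1m(\vone_m^\top\otimes\mI_d)\mY^k$ telescopes this to $\tfrac1m(\vone_m^\top\otimes\mI_d)\mY^{k+1}$, which is the claim for $k+1$.

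Then \eqref{eq:avg_model} needs no further induction: applying $\tfrac1m(\vone_m^\top\otimes\mI_d)$ to the recursion $\mX^{k+1} = \mM(k)\mX^k - \eta\mV^k$ and again using the key identity on the first term yields $\bar\vx^{k+1} = \bar\vx^k - \eta\bar\vv^k$; plugging in \eqref{eq:avg_gradient} replaces $\bar\vv^k$ by $\tfrac1m(\vone_m^\top\otimes\mI_d)\mY^k$, completing the proof.

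There is essentially no obstacle here — the argument is a one-line induction. The only two points that merit a moment's attention are (i) extracting $\vone_m^\top\mW(k)=0$ from the range hypothesis, rather than from symmetry of $\mW(k)$, which the assumptions do not explicitly impose, and (ii) the bookkeeping with Kronecker products, i.e. checking that $(\vone_m^\top\otimes\mI_d)$ commutes through the block-structured mixing operators exactly as claimed.
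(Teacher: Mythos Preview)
Your proposal is correct and follows essentially the same approach as the paper's proof: induction on $k$ for \eqref{eq:avg_gradient}, using the identity $(\vone_m^\top\otimes\mI_d)\mM(k)=\vone_m^\top\otimes\mI_d$ (equivalently $(\vone_m^\top\otimes\mI_d)(\mM(k)-\mI_{md})=0$) derived from the range condition in Assumption~\ref{assum:gossip_matrix_sequence}, and then \eqref{eq:avg_model} by the same averaging applied to the $\mX$-update. The only cosmetic difference is that the paper writes out the inductive step by splitting $\mM(k)\mV^k$ as $\mV^k + (\mM(k)-\mI_{md})\mV^k$ before averaging, whereas you invoke the eigenvector identity directly; the content is identical.
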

\begin{proof}
	We prove it using the induction. For $k=0$ it is trivial because of start point. Now suppose that at the $k$-th iteration, the relation \eqref{eq:avg_gradient} is true:
	\[\bar\vv^k = \frac{1}{m}(\vone^\top_m \otimes \mI_d)\mY^k.\]
	Hence, at the $(k+1)$-th iteration, we have
	\begin{align}
		\bar\vv^{k+1} & = \frac{1}{m}(\vone^\top_m \otimes \mI_d)\mV^{k+1} \nonumber \\
		& = \frac{1}{m}(\vone^\top_m \otimes \mI_d)\mV^k + \frac{1}{m} (\vone^\top_m \otimes \mI_d)(\mM(k) - \mI_{md})\mV^{k}  + \frac{1}{m}(\vone^\top_m \otimes \mI_d)\left(\mY^{k+1} - \mY^k\right) \nonumber \\
		& = \frac{1}{m}(\vone^\top_m \otimes \mI_d)\mV^k + \frac{1}{m}(\vone^\top_m \otimes \mI_d)\left(\mY^{k+1} - \mY^k\right) \nonumber \\
		& = \frac{1}{m}(\vone^\top_m \otimes \mI_d)\mY^{k+1}, \nonumber
	\end{align}
	where the third line follows from \cref{assum:gossip_matrix_sequence}:
        \begin{align*}
        (\vone^\top_m \otimes \mI_d)(\mM(k) - \mI_{md}) = -(\vone^\top_m \otimes \mI_d)(\mW(k) \otimes \mI_{d}) = -(\vone^\top_m\mW(k) \otimes \mI_d) = 0.
        \end{align*}
        Thus, we complete the proof of \eqref{eq:avg_gradient}. 
	For \eqref{eq:avg_model},
        \begin{align*}
		\bar\vx^{k+1} & = \bar\vx^{k} + \frac{1}{m}(\vone^\top_m \otimes \mI_d)(\mM(k) - \mI_{md})\mX^k - \frac{\eta}{m}(\vone^\top_m \otimes \mI_d)\mV^k \\
		& = \bar\vx^{k} - \eta\bar\vv^k = \bar\vx^k - \frac{\eta}{m} (\vone^\top_m \otimes \mI_d)\mY^k.
	\end{align*}
\end{proof}
\begin{proposition}
    If $\mW(k)$ satisfy \cref{assum:gossip_matrix_sequence} and $\mM(k)$ is taken from \eqref{eq: mixing-matrix}, then $\forall x \in \mathbb{R}^{md}$, we have
    \begin{align}
        \label{eq: matr-recur}
        \norm{\mM(k)x - \frac{1}{m}(\vone_m \otimes \mI_d)(\vone^\top_m \otimes \mI_d)x}^2 \leq (1 - \rho)\norm{x - \frac{1}{m}(\vone_m \otimes \mI_d)(\vone^\top_m \otimes \mI_d)x}^2,
    \end{align}
\end{proposition}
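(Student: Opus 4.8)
The plan is to reduce \eqref{eq: matr-recur} to item~4 of Assumption~\ref{assum:gossip_matrix_sequence} by splitting the argument into its consensus and mean-zero components. First I would observe that $\tfrac1m(\vone_m\otimes\mI_d)(\vone_m^\top\otimes\mI_d) = \tfrac1m(\vone_m\vone_m^\top)\otimes\mI_d =: \mP$ is exactly the orthogonal projector onto $\cL=\{x\in\R^{md}:x_1=\ldots=x_m\}$, since $\tfrac1m\vone_m\vone_m^\top$ is the orthogonal projector onto $\Span\{\vone_m\}$ in $\R^m$. Hence $\mI_{md}-\mP$ is the orthogonal projector onto $\cL^\perp=\{x:\sum_{i=1}^m x_i=0\}$. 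Write $x = x_\parallel + x_\perp$ with $x_\parallel = \mP x\in\cL$ and $x_\perp = (\mI_{md}-\mP)x\in\cL^\perp$, so that $\norm{x-\mP x}^2 = \norm{x_\perp}^2$.

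Next I would use items~2 and~3 of Assumption~\ref{assum:gossip_matrix_sequence} together with $\mM(k)=\mI_{md}-\mW(k)\otimes\mI_d$. Because $\ker\mW(k)\supseteq\cL$, the operator $\mW(k)\otimes\mI_d$ annihilates $x_\parallel$, so $\mM(k)x_\parallel = x_\parallel$. Because $\range\mW(k)\subseteq\cL^\perp$, we have $(\mW(k)\otimes\mI_d)x_\perp\in\cL^\perp$, hence $\mM(k)x_\perp = x_\perp-(\mW(k)\otimes\mI_d)x_\perp\in\cL^\perp$. Therefore $\mM(k)x = x_\parallel + \mM(k)x_\perp$ is the orthogonal decomposition of $\mM(k)x$ with respect to $\cL\oplus\cL^\perp$, and in particular its $\cL$-component equals $x_\parallel=\mP x$. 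This gives the two key identities $\mM(k)x - \mP x = \mM(k)x_\perp$ and $x-\mP x = x_\perp$.

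Finally I would invoke \eqref{eq:chi}: applied to $x_\perp\in\cL^\perp$ it yields $\norm{(\mW(k)\otimes\mI_d)x_\perp - x_\perp}^2\leq(1-\chi^{-1})\norm{x_\perp}^2$, and since $(\mW(k)\otimes\mI_d)x_\perp - x_\perp = -\mM(k)x_\perp$ this is $\norm{\mM(k)x_\perp}^2\leq(1-\chi^{-1})\norm{x_\perp}^2$. Combining with the two identities and recalling $\rho=1/\chi$ from \eqref{eq:rho_and_chi} gives $\norm{\mM(k)x-\mP x}^2 = \norm{\mM(k)x_\perp}^2 \leq (1-\rho)\norm{x_\perp}^2 = (1-\rho)\norm{x-\mP x}^2$, which is \eqref{eq: matr-recur}. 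I do not expect a real obstacle here; the only point requiring a little care is the Kronecker bookkeeping, i.e. checking that $\mW(k)\otimes\mI_d$ acting on $\R^{md}$ inherits the kernel, range, and contraction properties of $\mW(k)$ from Assumption~\ref{assum:gossip_matrix_sequence}, which is immediate from the block-diagonal action of the Kronecker product.
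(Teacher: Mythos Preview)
Your proposal is correct and follows essentially the same approach as the paper: both show that $\mM(k)$ fixes the consensus component (the paper via $\mM(k)(\vone_m\otimes\mI_d)=\vone_m\otimes\mI_d$, you via $\mM(k)x_\parallel=x_\parallel$), rewrite $\mM(k)x-\mP x$ as $\mM(k)$ applied to the mean-zero part $x-\mP x\in\cL^\perp$, and then invoke the contraction~\eqref{eq:chi}. Your write-up is in fact more explicit than the paper's (which appeals to an eigenvector decomposition and the identity $\vone_{md}^\top(\mI_{md}-\mP)=0$), but the underlying argument is the same.
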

\begin{proof}
Note that
\begin{align*}
    \mM(k)(\vone_m \otimes \mI_d) = ((\mI_m - \mW(k))\otimes \mI_d)(\vone_m \otimes \mI_d) = ((\mI_m - \mW(k))\vone_m\otimes \mI_d) = \vone_m \otimes \mI_d.
\end{align*}
Therefore, 
\begin{align*}
     \norm{\mM(k)x - \frac{1}{m}(\vone_m \otimes \mI_d)(\vone^\top_m \otimes \mI_d)x}^2 =  \norm{\mM(k)\left(x - \frac{1}{m}(\vone_m \otimes \mI_d)(\vone^\top_m \otimes \mI_d)x\right)}^2.
\end{align*}
Decomposing $x - \frac{1}{m}(\vone_m \otimes \mI_d)(\vone^\top_m \otimes \mI_d)x$ by eigenvectors of $\mM(k)$ and using that 
\begin{align*}
    \vone_{md}^\top\left(\mI_{md} - \frac{1}{m}(\vone_m\vone_m^\top \otimes \mI_d)\right) = 0,
\end{align*}
we claim the final result.
\begin{remark}
    The proposition above is equivalent to
    \begin{align*}
        \norm{\mM(k)x^k - (\vone_m \otimes \mI_d)\bar\vx^k}^2 \leq (1 - \rho)\norm{x^k - (\vone_m \otimes \mI_d)\bar\vx^k}^2.
    \end{align*}
\end{remark}
\end{proof}
\subsection{Descent lemma}
\begin{lemma}{\textbf{(Descent lemma)}}
    Let \cref{assum:smoothness_F} and \cref{assum:gossip_matrix_sequence} hold. Then, after $k$ iterations of \cref{alg:yup}, we get
	\begin{align}
		\label{eq: descent-final}
		\E F(\bar\vx^{k+1}) &\le \E F(\bar\vx^{k}) - \frac{\eta}{2}\E \norm{\nabla F(\bar\vx^{k})}^2 + \frac{\eta }{m}\E \norm{\nabla F(\mX^{k}) - \mY^k}^2 + \frac{\eta L^2}{m}\E \norm{\mX^{k} - (\vone_m \otimes \mI_d)\bar\vx^k}^2 \nonumber \\
		&- \left(\frac{\eta}{2} - \frac{\eta^2 L}{2}\right)\E\norm{\bar\vv^{k}}^2.
	\end{align}
\end{lemma}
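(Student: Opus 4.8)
The plan is the standard descent argument for gradient‑tracking methods, carried out along the sequence of network averages $\bar\vx^k$. First I would invoke the $L$-smoothness of $F$ (which, under the averaging convention used in the nonconvex analysis, is inherited from \cref{assum:smoothness_F}) in the form of the quadratic upper bound $F(\bar\vx^{k+1}) \le F(\bar\vx^k) + \langle \nabla F(\bar\vx^k), \bar\vx^{k+1} - \bar\vx^k\rangle + \tfrac{L}{2}\norm{\bar\vx^{k+1} - \bar\vx^k}^2$, and then substitute the averaged dynamics $\bar\vx^{k+1} = \bar\vx^k - \eta\bar\vv^k$ supplied by \cref{lem:gt} (equation \eqref{eq:avg_model}, which itself uses $\vone_m^\top\mW(k)=0$ from \cref{assum:gossip_matrix_sequence}). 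This yields
\begin{align*}
	F(\bar\vx^{k+1}) \le F(\bar\vx^k) - \eta\langle \nabla F(\bar\vx^k), \bar\vv^k\rangle + \frac{\eta^2 L}{2}\norm{\bar\vv^k}^2 .
\end{align*}

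Next I would rewrite the cross term using the polarization identity $-\langle a,b\rangle = \tfrac12\norm{a-b}^2 - \tfrac12\norm{a}^2 - \tfrac12\norm{b}^2$ with $a=\nabla F(\bar\vx^k)$, $b=\bar\vv^k$. This splits the bound into exactly the three structural pieces of \eqref{eq: descent-final}: the descent term $-\tfrac{\eta}{2}\norm{\nabla F(\bar\vx^k)}^2$, the step term $-\bigl(\tfrac{\eta}{2}-\tfrac{\eta^2 L}{2}\bigr)\norm{\bar\vv^k}^2$, and a residual $\tfrac{\eta}{2}\norm{\nabla F(\bar\vx^k)-\bar\vv^k}^2$ measuring the mismatch between the tracked average direction and the true gradient at the consensus point.

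The only genuine estimate is the bound on this residual. By \eqref{eq:avg_gradient}, $\bar\vv^k$ is the block-average of $\mY^k$, while $\nabla F(\bar\vx^k)$ is the block-average of $\nabla F_i(\bar\vx^k)$, so $\nabla F(\bar\vx^k)-\bar\vv^k = \tfrac1m\sum_i(\nabla F_i(\bar\vx^k)-\mY^k_i)$. I would insert $\pm\nabla F_i(x_i^k)$ in each summand, apply $\norm{u+v}^2\le 2\norm{u}^2+2\norm{v}^2$, then use convexity of $\norm{\cdot}^2$ (Jensen) to pull the average outside each squared norm. The first group, $\tfrac1m\sum_i\norm{\nabla F_i(\bar\vx^k)-\nabla F_i(x_i^k)}^2$, is bounded by $\tfrac{L^2}{m}\sum_i\norm{\bar\vx^k-x_i^k}^2 = \tfrac{L^2}{m}\norm{\mX^k-(\vone_m\otimes\mI_d)\bar\vx^k}^2$ via \cref{assum:smoothness_F}; the second group is exactly $\tfrac1m\norm{\nabla F(\mX^k)-\mY^k}^2$. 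Multiplying through by $\tfrac{\eta}{2}$ reproduces the coefficients $\tfrac{\eta L^2}{m}$ and $\tfrac{\eta}{m}$ in the claim. Taking expectation over all randomness up to iteration $k$ then gives \eqref{eq: descent-final}.

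I do not anticipate a serious obstacle: the inequality is deterministic up to the final expectation, and every ingredient is already available. The one point requiring care is the Kronecker/averaging bookkeeping — confirming that the block-averaging conventions for $\mX^k,\mY^k,\mV^k$ are consistent so that the smoothness constant enters as $L$ (and $L^2$) rather than with a spurious factor of $m$, and that the averaged recursion $\bar\vx^{k+1}=\bar\vx^k-\eta\bar\vv^k$ holds exactly, which is precisely the content of \cref{lem:gt}.
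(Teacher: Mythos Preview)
Your proposal is correct and follows essentially the same route as the paper: $L$-smoothness applied to $\bar\vx^{k+1}=\bar\vx^k-\eta\bar\vv^k$, the polarization identity to split the cross term, then bounding $\norm{\nabla F(\bar\vx^k)-\bar\vv^k}^2$ by inserting $\pm\nabla F_i(x_i^k)$ and using Jensen together with $\norm{u+v}^2\le 2\norm{u}^2+2\norm{v}^2$ and per-node $L$-smoothness. The paper performs the Jensen step and the $2$-$2$ split in the opposite order, but the argument and the resulting constants are identical.
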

\begin{proof}
Starting with $L$-smoothness:
	\begin{align}
		F(\bar\vx^{k+1}) &\le F(\bar\vx^{k}) - \eta\dotp{\bar\vv^k}{\nabla F(\bar\vx^{k})} + \frac{\eta^2 L}{2}\norm{\bar\vv^{k}}^2 \nonumber \\
		& = F(\bar\vx^{k}) - \frac{\eta}{2}\norm{\nabla F(\bar\vx^{k})}^2 - \frac{\eta}{2}\norm{\bar\vv^{k}}^2 + \frac{\eta}{2}\norm{\nabla F(\bar\vx^{k}) - \bar\vv^k}^2 + \frac{\eta^2 L}{2}\norm{\bar\vv^{k}}^2 \nonumber \\
		& = F(\bar\vx^{k}) - \frac{\eta}{2}\norm{\nabla F(\bar\vx^{k})}^2 + \frac{\eta}{2}\norm{\nabla F(\bar\vx^{k}) - \bar\vv^k}^2 - \left(\frac{\eta}{2} - \frac{\eta^2 L}{2}\right)\norm{\bar\vv^{k}}^2 \nonumber \\
		&\le F(\bar\vx^{k}) - \frac{\eta}{2}\norm{\nabla F(\bar\vx^{k})}^2 + \frac{\eta}{2}\norm{\nabla F(\bar\vx^k)- \frac{1}{m}(\vone^\top_m \otimes \mI_d)\mY^k}^2 - \left(\frac{\eta}{2} - \frac{\eta^2 L}{2}\right)\norm{\bar\vv^{k}}^2 \nonumber \\
        &\le F(\bar\vx^{k}) - \frac{\eta}{2}\norm{\nabla F(\bar\vx^{k})}^2 + \frac{\eta}{2m}\norm{(\vone_m \otimes \mI_d)\nabla F(\bar\vx^k) - \nabla F(\mX^k) + \nabla F(\mX^k) - \mY^k}^2 \nonumber \\
        &- \left(\frac{\eta}{2} - \frac{\eta^2 L}{2}\right)\norm{\bar\vv^{k}}^2 \nonumber \\
		&\le F(\bar\vx^{k}) - \frac{\eta}{2} \norm{\nabla F(\bar\vx^{k})}^2 + \frac{\eta }{m} \norm{\nabla F(\mX^{k}) - \mY^k}^2 + \frac{\eta L^2}{m} \norm{\mX^{k} - (\vone_m \otimes \mI_d)\bar\vx^k}^2 \nonumber \\
		&- \left(\frac{\eta}{2} - \frac{\eta^2 L}{2}\right)\norm{\bar\vv^{k}}^2,
	\end{align}
	where in the last inequality we use $(a+b)^2 \leq 2a^2 + 2b^2$. Taking the expectation, we claim the final result.
\end{proof}
\subsection{Auxiliary lemmas}
\begin{lemma}
	\label{lem: recursion-grad-lemma}
	Let \cref{assum:smoothness_F_average} holds. Hence, after $k$ iterations the following is fulfilled:
	\begin{align*}
		\E\norm{\nabla F(\mX^{k+1}) - \mY^{k+1}}^2 \leq (1 - p)\E\norm{\nabla F(\mX^{k}) - \mY^{k}}^2 + \frac{(1 - p)\hat{L}^2}{b}\E\norm{\mX^{k+1} - \mX^k}^2.
	\end{align*}
\end{lemma}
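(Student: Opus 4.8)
The plan is to analyze the recursion defining $\mY^{k+1}$ in \cref{alg:yup} by conditioning on the randomness available before iteration $k+1$ and splitting according to the two cases in the update rule. Recall $\mY^{k+1}$ equals $\nabla F(\mX^{k+1})$ with probability $p$ and equals $\nabla^k = \mY^k + \frac1b\sum_{j\in S^k_i}(\nabla f_{ij}(x_i^{k+1}) - \nabla f_{ij}(x_i^k))$ (componentwise in $i$) with probability $1-p$. In the first case $\nabla F(\mX^{k+1}) - \mY^{k+1} = 0$, contributing nothing. So, taking conditional expectation,
\begin{align*}
\E\norm{\nabla F(\mX^{k+1}) - \mY^{k+1}}^2 = (1-p)\,\E\norm{\nabla F(\mX^{k+1}) - \nabla^k}^2.
\end{align*}

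Next I would expand $\nabla F(\mX^{k+1}) - \nabla^k$. Writing it out per node $i$ and using $\mY^k$ on both sides,
\begin{align*}
\nabla F_i(x_i^{k+1}) - (\nabla^k)_i = \big(\nabla F_i(x_i^{k+1}) - \nabla F_i(x_i^k)\big) - \Big(\tfrac1b\sum_{j\in S_i^k}\big(\nabla f_{ij}(x_i^{k+1}) - \nabla f_{ij}(x_i^k)\big)\Big) + \big(\nabla F_i(x_i^k) - \mY_i^k\big).
\end{align*}
The key point is that, conditioned on everything through the construction of $\mX^{k+1}$, the batch $S_i^k$ is fresh and the minibatch term is an unbiased estimator of $\nabla F_i(x_i^{k+1}) - \nabla F_i(x_i^k)$ (since $p_{ij}=1/n$). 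Hence the first two grouped terms together form a mean-zero random vector, while the third term $\nabla F_i(x_i^k) - \mY_i^k$ is deterministic given the conditioning. Using the decomposition $\E\norm{\xi + c}^2 = \E\norm{\xi - \E\xi}^2 + \norm{c + \E\xi}^2 = \operatorname{Var} + \norm{c}^2$ for mean-zero $\xi$ (with $\E\xi=0$ here) and summing over $i$, I get
\begin{align*}
\E\norm{\nabla F(\mX^{k+1}) - \nabla^k}^2 = \E\norm{\nabla F(\mX^{k}) - \mY^{k}}^2 + \sum_{i=1}^m \E\,\operatorname{Var}\!\Big(\tfrac1b\sum_{j\in S_i^k}\big(\nabla f_{ij}(x_i^{k+1}) - \nabla f_{ij}(x_i^k)\big)\Big).
\end{align*}

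Then I would bound each variance term. Since $S_i^k$ consists of $b$ i.i.d. uniform draws, the variance of the average is $\tfrac1b$ times the single-sample variance, which is at most $\tfrac1b\cdot\tfrac1n\sum_{j=1}^n\norm{\nabla f_{ij}(x_i^{k+1}) - \nabla f_{ij}(x_i^k)}^2$ (dropping the subtracted squared mean). By \cref{assum:smoothness_F_average} this is at most $\tfrac{\hat L^2}{b}\norm{x_i^{k+1} - x_i^k}^2$. Summing over $i$ gives $\tfrac{\hat L^2}{b}\norm{\mX^{k+1}-\mX^k}^2$. Multiplying through by $(1-p)$ and taking total expectation yields exactly the claimed inequality. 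The main obstacle is bookkeeping the conditioning carefully — making precise that $S_i^k$ is independent of $\mX^{k+1}$, $\mX^k$, and $\mY^k$, so that the cross term vanishes and the variance-of-an-average identity applies; once the filtration is set up correctly, the rest is the standard minibatch variance bound plus average smoothness.
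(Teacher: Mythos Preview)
Your proof is correct and follows essentially the same approach as the paper: split on the coin flip, add and subtract $\nabla F(\mX^k)$ to isolate a mean-zero minibatch term from the deterministic $\nabla F(\mX^k)-\mY^k$, then use the bias--variance decomposition together with the i.i.d.\ minibatch variance identity and \cref{assum:smoothness_F_average}. The paper's presentation is slightly terser (it invokes $\E\norm{X-\E X}^2\le \E\norm{X}^2$ rather than writing out the variance explicitly), but the argument is the same.
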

\begin{proof}
	\begin{align}
		\label{eq: recursion-as-PAGE}
		\E&\norm{\nabla F(\mX^{k+1}) - \mY^{k+1}}^2 = p\E\norm{\nabla F(\mX^{k+1}) - \nabla F(\mX^{k+1})}^2 \nonumber \\ &+ (1 - p)\E\norm{\nabla F(\mX^{k+1}) - \mY^k - \nabla_{S^k} F(\mX^{k+1}) + \nabla_{S^k}F(\mX^k)}^2 \nonumber 
		\\&= (1 - p) \E\norm{\nabla F(\mX^{k+1}) - \nabla F(\mX^{k}) + \nabla F(\mX^{k}) - \mY^k - \nabla_{S^k} F(\mX^{k+1}) + \nabla_{S^k}F(\mX^k)}^2 \nonumber \\&= (1 - p)\E\norm{\nabla F(\mX^{k+1}) - \nabla F(\mX^{k}) - \nabla_{S^k} F(\mX^{k+1}) + \nabla_{S^k}F(\mX^k)}^2 \nonumber \\&+ (1 - p)\E\norm{\nabla F(\mX^{k}) - \mY^k}^2, 
	\end{align}
	Rewriting $\nabla_{S^k}F(\mX)$ as claimed before, using that $\E\norm{X - \E X}^2 \leq \E\norm{X}^2$, clarifying that indices in one batch are chosen independently and using the $\hat{L}$-average smoothness, one can obtain
	\begin{align}
		\label{eq: rec-with-b}
		\E\norm{\nabla F(\mX^{k+1}) - \mY^{k+1}}^2 \leq (1 - p)\E\norm{\nabla F(\mX^{k}) - \mY^{k}}^2 + \frac{(1 - p)\hat{L}^2}{b}\E\norm{\mX^{k+1} - \mX^k}^2,
	\end{align}
	what ends the proof.
\end{proof}
\begin{remark}
		The proof is similar to the proof of Lemma 3 in \cite{li2021page}, but we write it for each node in the same time.
	\end{remark}
\noindent Now we need to bound some extra terms for our Lyapunov's function. We use the next notation
\begin{align*}
	\Omega_1^k & = \E\norm{\mX^k - (\vone_m \otimes \mI_d)\bar\vx^k}^2, \nonumber \\
	\Omega_2^k & = \E\norm{\mV^k - (\vone_m \otimes \mI_d)\bar\vv^k}^2. \nonumber
\end{align*}
\begin{lemma}
	\label{lem: lyap-omegas}
	Let \cref{assum:gossip_matrix_sequence} holds. Therefore, for the \cref{alg:yup}, we have
	\begin{align*}
		\Omega_1^{k+1} &\leq \left(1 - \frac{\rho}{2}\right) \Omega_1^k + \frac{3\eta^2}{\rho}\Omega_2^k, \\
		\Omega_2^{k+1} &\leq \left(1 - \frac{\rho}{2}\right) \Omega_2^k + \frac{3}{\rho}\E\norm{\mY^{k+1} - \mY^k}^2. 
	\end{align*}
\end{lemma}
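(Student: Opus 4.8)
My plan is to propagate the two consensus errors through the \cref{alg:yup} updates after projecting onto the consensus subspace. Write $\mP = \tfrac{1}{m}(\vone_m\vone_m^\top \otimes \mI_d)$ for the orthogonal projector onto the consensus subspace, so that $(\vone_m \otimes \mI_d)\bar\vx^k = \mP\mX^k$, $(\vone_m \otimes \mI_d)\bar\vv^k = \mP\mV^k$, and hence $\Omega_1^k = \E\sqn{(\mI_{md}-\mP)\mX^k}$, $\Omega_2^k = \E\sqn{(\mI_{md}-\mP)\mV^k}$. The argument will rest on three elementary facts: (i) since $\mW(k)\vone_m = 0$ and $\vone_m^\top\mW(k) = 0$ by \cref{assum:gossip_matrix_sequence}, the mixing operator satisfies $\mM(k)\mP = \mP\mM(k) = \mP$, so that $(\mI_{md}-\mP)\mM(k) = \mM(k)(\mI_{md}-\mP) = \mM(k) - \mP$; (ii) \eqref{eq: matr-recur} gives the mixing contraction $\sqn{\mM(k)z - \mP z} \le (1-\rho)\sqn{z - \mP z}$ for every $z$, which applied to a $z$ with $\mP z = 0$ reads $\sqn{\mM(k)z} \le (1-\rho)\sqn{z}$; (iii) $\mI_{md}-\mP$ is an orthogonal projector, hence nonexpansive.

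For the first inequality I would apply $\mI_{md}-\mP$ to the update $\mX^{k+1} = \mM(k)\mX^k - \eta\mV^k$ and use (i) to rearrange it into $(\mI_{md}-\mP)\mX^{k+1} = \mM(k)(\mI_{md}-\mP)\mX^k - \eta(\mI_{md}-\mP)\mV^k$. Then I apply the Young-type inequality $\sqn{a-b} \le (1+\varepsilon)\sqn{a} + (1+\varepsilon^{-1})\sqn{b}$ with $\varepsilon = \tfrac{\rho/2}{1-\rho}$, which is chosen so that $(1+\varepsilon)(1-\rho) = 1-\tfrac{\rho}{2}$ and $1+\varepsilon^{-1} = \tfrac{2-\rho}{\rho} \le \tfrac{3}{\rho}$; bounding the first term by (ii) (with $z = (\mI_{md}-\mP)\mX^k$), the second by (iii), and taking expectations yields $\Omega_1^{k+1} \le (1-\tfrac{\rho}{2})\Omega_1^k + \tfrac{3\eta^2}{\rho}\Omega_2^k$.

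For the second inequality the extra ingredient is the gradient-tracking identity from \cref{lem:gt}: since the averages of $\mV$ and $\mY$ coincide, $\mP\mV^k = \mP\mY^k$ and $\mP\mV^{k+1} = \mP\mY^{k+1}$. Applying $\mI_{md}-\mP$ to $\mV^{k+1} = \mM(k)\mV^k + \mY^{k+1} - \mY^k$, using (i) together with $(\mI_{md}-\mP)\mV^{k+1} = (\mI_{md}-\mP)\mM(k)\mV^k + (\mI_{md}-\mP)(\mY^{k+1}-\mY^k) = \mM(k)(\mI_{md}-\mP)\mV^k + (\mI_{md}-\mP)(\mY^{k+1}-\mY^k)$, and then running the identical Young-plus-contraction computation (now bounding $\sqn{(\mI_{md}-\mP)(\mY^{k+1}-\mY^k)} \le \sqn{\mY^{k+1}-\mY^k}$ by (iii)) gives $\Omega_2^{k+1} \le (1-\tfrac{\rho}{2})\Omega_2^k + \tfrac{3}{\rho}\E\sqn{\mY^{k+1}-\mY^k}$. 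I do not expect a genuine obstacle here: the only point requiring care is the bookkeeping of which iterates lie in the consensus subspace and the identification $\mP\mV^k = \mP\mY^k$ coming from \cref{lem:gt}, after which the whole computation reduces to one application of Young's inequality and the contraction \eqref{eq: matr-recur}.
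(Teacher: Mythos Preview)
Your proposal is correct and essentially mirrors the paper's argument: both proofs apply the mixing contraction \eqref{eq: matr-recur} to the consensus-error part of the update and then Young's inequality (you use $\varepsilon = \tfrac{\rho/2}{1-\rho}$, the paper uses $\beta = \tfrac{\rho}{2}$, yielding the same constants). The only cosmetic difference is that for $\Omega_2$ the paper first drops the nonpositive term $-m\sqn{\bar\vv^{k+1}-\bar\vv^k}$ rather than invoking $\mP\mV^k = \mP\mY^k$ through \cref{lem:gt}, but the effect is identical.
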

\begin{proof}
	Substituting the iteration of \cref{alg:yup} into $\Omega_1^{k+1}$, we get 
	\begin{align}
		\label{eq:omega-1}
		&   \norm{\mX^{k+1}-(\vone_m \otimes \mI_d)\bar\vx^{k+1}}^2 \nonumber \\
		& = \norm{\mM(k)\mX^k - \eta \mV^k - (\vone_m \otimes \mI_d)\bar\vx^k + (\vone_m \otimes \mI_d)\eta\bar\vv^k }^2 \nonumber \\
		& \le (1+\beta)(1-\rho)\norm{\mX^k - (\vone_m \otimes \mI_d)\bar\vx^k}^2  + \left(1+\frac{1}{\beta}\right)\eta^2\norm{\mV^k-(\vone_m \otimes \mI_d)\bar\vv^k}^2 \nonumber \\
		& \le  \left(1-\frac{\rho}{2}\right)\norm{\mX^k - (\vone_m \otimes \mI_d)\bar\vx^k}^2  + \left(1+\frac{2}{\rho}\right)\eta^2\norm{\mV^k-(\vone_m \otimes \mI_d)\bar\vv^k}^2 \nonumber \\
		& \le \left(1-\frac{\rho}{2}\right)\norm{\mX^k - (\vone_m \otimes \mI_d)\bar\vx^k}^2 + \frac{3\eta^2}{\rho}\norm{\mV^k-(\vone_m \otimes \mI_d)\bar\vv^k}^2,
	\end{align}
	where we choose $\beta = \frac{\rho}{2}$. For $\Omega_2^{k+1}$ respectively
	\begin{align*}
		\norm{\mV^{k+1} - (\vone_m \otimes \mI_d)\bar\vv^{k+1}}^2 & = \norm{\mV^{k+1} - (\vone_m \otimes \mI_d)\bar\vv^{k} + (\vone_m \otimes \mI_d)\bar\vv^{k} - (\vone_m \otimes \mI_d)\bar\vv^{k+1}}^2 \\
		& = \norm{\mV^{k+1} - (\vone_m \otimes \mI_d)\bar\vv^{k}}^2 - m\norm{\bar\vv^{k+1} - \bar\vv^{k}}^2 \\
		& \le \norm{\mV^{k+1} - (\vone_m \otimes \mI_d)\bar\vv^{k}}^2.
	\end{align*}
	Thus by the update rule of \cref{alg:yup}, one can obtain
	\begin{align}
		\label{eq:omega-2}
		\norm{\mV^{k+1} - (\vone_m \otimes \mI_d)\bar\vv^{k+1}}^2 
		& \le \norm{\mV^{k+1} - (\vone_m \otimes \mI_d)\bar\vv^{k}}^2 \nonumber \\
		& = \norm{\mM(k)\mV^{k} + \mY^{k+1} - \mY^k - (\vone_m \otimes \mI_d)\bar\vv^{k}}^2 \nonumber \\
		& \le  \left(1-\frac{\rho}{2}\right)\norm{\mV^{k} - (\vone_m \otimes \mI_d)\bar\vv^{k}}^2 + \left(1+\frac{2}{\rho}\right) \norm{\mY^{k+1} - \mY^k}^2 \nonumber \\
		& \le \left(1-\frac{\rho}{2}\right)\norm{\mV^{k} - (\vone_m \otimes \mI_d)\bar\vv^{k}}^2 + \frac{3}{\rho} \norm{\mY^{k+1} - \mY^k}^2.
	\end{align}
	Taking the expectation in both bounds, we claim the final result.
\end{proof}
\noindent As a consequence of \cref{lem: recursion-grad-lemma} and \cref{lem: lyap-omegas}, we need to bound some redundant expressions.
\begin{lemma}
	\label{lem: extra-expr-bound}
	Let Assumptions \ref{assum:smoothness_F}, \cref{assum:smoothness_F_average} and \ref{assum:gossip_matrix_sequence} hold. Then, after $k$ iterations of \cref{alg:yup}, we get
	\begin{align*}
		\E\norm{\mY^{k+1} - \mY^k}^2 &\leq (1 + p)\hat{L}^2\E\norm{\mX^{k+1} - \mX^k}^2 + 2p\E\norm{\nabla F(\mX^k) - \mY^k}^2,\\
		\E\norm{\mX^{k+1} - \mX^{k}}^2 &\leq 2\widetilde{C}\E\norm{\mX^k - (\vone_m \otimes \mI_d)\bar\vx^k}^2 + 2\eta^2\E\norm{\mV^k - (\vone_m \otimes \mI_d)\bar\vv^k}^2 + 2\eta^2 m \E\norm{\bar\vv^k}^2,
	\end{align*}
        where $\widetilde{C} = \max_{k} \norm{\mM(k) - \mI_{md}}^2 = \max_{k}\sigma_{\max}(\mM(k) - \mI_{md})^2 \leq 4$.
\end{lemma}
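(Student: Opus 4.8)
The plan is to prove the two inequalities separately, in each case plugging in the update rules of \cref{alg:yup} and then using nothing beyond Young's inequality $\sqn{a+b}\le 2\sqn a+2\sqn b$, the Kronecker identities recorded in the propositions above, the minibatch variance identity, and \cref{assum:smoothness_F_average,assum:gossip_matrix_sequence}.

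\textbf{Bound on $\E\sqn{\mY^{k+1}-\mY^k}$.} First I would condition on $\mathcal F_k$, the history up to step $k$; since $\mX^{k+1}=\mM(k)\mX^k-\eta\mV^k$ is $\mathcal F_k$-measurable, the only fresh randomness in $\mY^{k+1}-\mY^k$ is the Bernoulli coin and the batch $S^k$. Splitting over the coin,
\[
\E\sqn{\mY^{k+1}-\mY^k}=p\,\E\sqn{\nabla F(\mX^{k+1})-\mY^k}+(1-p)\,\E\sqn{\nabla_{S^k}F(\mX^{k+1})-\nabla_{S^k}F(\mX^k)}.
\]
For the first term I would insert $\pm\nabla F(\mX^k)$, apply $\sqn{a+b}\le 2\sqn a+2\sqn b$, and bound $\sqn{\nabla F(\mX^{k+1})-\nabla F(\mX^k)}=\sum_i\sqn{\nabla F_i(x_i^{k+1})-\nabla F_i(x_i^k)}\le\hat L^2\sqn{\mX^{k+1}-\mX^k}$ via Jensen's inequality and \cref{assum:smoothness_F_average}; this contributes $2p\hat L^2\E\sqn{\mX^{k+1}-\mX^k}+2p\,\E\sqn{\nabla F(\mX^k)-\mY^k}$. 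For the second term I would use that the $b$ indices in each $S_i^k$ are drawn independently, so the minibatch variance identity bounds $\E_{S_i^k}\sqn{\nabla_{S_i^k}F_i(x_i^{k+1})-\nabla_{S_i^k}F_i(x_i^k)}$ by $\tfrac1n\sum_{j=1}^n\sqn{\nabla f_{ij}(x_i^{k+1})-\nabla f_{ij}(x_i^k)}\le\hat L^2\sqn{x_i^{k+1}-x_i^k}$ (Jensen once more, then \cref{assum:smoothness_F_average}); summing over $i$ this contributes $(1-p)\hat L^2\E\sqn{\mX^{k+1}-\mX^k}$. Adding the two contributions, the coefficient of $\E\sqn{\mX^{k+1}-\mX^k}$ is $2p\hat L^2+(1-p)\hat L^2=(1+p)\hat L^2$, which is exactly the claimed estimate.

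\textbf{Bound on $\E\sqn{\mX^{k+1}-\mX^k}$.} From the update I would write $\mX^{k+1}-\mX^k=(\mM(k)-\mI_{md})\mX^k-\eta\mV^k$, and then use $\mM(k)(\vone_m\otimes\mI_d)=\vone_m\otimes\mI_d$ (shown just above) to replace $(\mM(k)-\mI_{md})\mX^k$ by $(\mM(k)-\mI_{md})\bigl(\mX^k-(\vone_m\otimes\mI_d)\bar\vx^k\bigr)$, since $\mM(k)-\mI_{md}$ annihilates the consensus component. Applying $\sqn{a+b}\le 2\sqn a+2\sqn b$ and $\sqn{(\mM(k)-\mI_{md})v}\le\widetilde C\sqn v$ gives $\sqn{\mX^{k+1}-\mX^k}\le 2\widetilde C\sqn{\mX^k-(\vone_m\otimes\mI_d)\bar\vx^k}+2\eta^2\sqn{\mV^k}$. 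Finally I would decompose $\mV^k$ into its consensus average $(\vone_m\otimes\mI_d)\bar\vv^k$ and the orthogonal deviation $\mV^k-(\vone_m\otimes\mI_d)\bar\vv^k$; orthogonality yields $\sqn{\mV^k}=\sqn{\mV^k-(\vone_m\otimes\mI_d)\bar\vv^k}+m\sqn{\bar\vv^k}$, and taking expectations produces the stated inequality. The bound $\widetilde C\le 4$ follows from $\mM(k)-\mI_{md}=-\mW(k)\otimes\mI_d$, hence $\norm{\mM(k)-\mI_{md}}=\norm{\mW(k)}$, together with items 2--4 of \cref{assum:gossip_matrix_sequence}: $\mW(k)$ vanishes on the consensus subspace, while on its orthogonal complement $\norm{\mW(k)x}\le\norm{\mW(k)x-x}+\norm x\le(\sqrt{1-\chi^{-1}}+1)\norm x\le 2\norm x$.

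\textbf{Main obstacle.} The only delicate point is the minibatch estimate inside the bound on $\E\sqn{\mY^{k+1}-\mY^k}$: one must carefully separate the randomness of the coin and of the batch indices from the already-resolved randomness in $\mX^{k+1}$, and must invoke independence of the $b$ within-batch samples to control the variance of the stochastic gradient difference. Everything else is routine bookkeeping with $\sqn{a+b}\le 2\sqn a+2\sqn b$, the Kronecker identities, and the consensus/deviation orthogonal splitting.
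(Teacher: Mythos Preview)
Your proposal is correct and follows essentially the same route as the paper's proof: the same coin-split for $\mY^{k+1}-\mY^k$, the same $\pm\nabla F(\mX^k)$ insertion with Young's inequality, the same batch second-moment bound, and the identical consensus/deviation decomposition for $\mX^{k+1}-\mX^k$. The only cosmetic differences are that the paper bounds $\sqn{\nabla F(\mX^{k+1})-\nabla F(\mX^k)}$ first by $L^2\sqn{\mX^{k+1}-\mX^k}$ via \cref{assum:smoothness_F} and then invokes $L\le\hat L$, whereas you go straight to $\hat L^2$ via Jensen and \cref{assum:smoothness_F_average}; and that the paper handles the batch term with Cauchy--Schwarz on the sum before taking expectation, while you use the second-moment-of-average bound directly. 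Your explicit justification of $\widetilde C\le 4$ is a nice addition that the paper leaves implicit.
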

\begin{proof}
	Start with substituting $\mY^{k+1}$:
	\begin{align}
		\label{eq: adjac-y}
		\E\norm{\mY^{k+1} - \mY^k}^2 &= p\E\norm{\nabla F(\mX^{k+1}) - \mY^k}^2 + (1-p)\E\norm{\nabla_{S^k} F(\mX^{k+1}) - \nabla_{S^k} F(\mX^{k})}^2 \nonumber \\ 
		&= p\E\norm{\nabla F(\mX^{k+1}) - \nabla F(\mX^{k}) + \nabla F(\mX^{k}) - \mY^k}^2 \nonumber \\
		&+ (1-p)\E\norm{\nabla_{S^k} F(\mX^{k+1}) - \nabla_{S^k} F(\mX^{k})}^2 \nonumber \\
		&\leq p(1 + \beta)L^2\E\norm{\mX^{k+1} - \mX^k}^2 + p\left(1 + \frac{1}{\beta}\right)\E\norm{\nabla F(\mX^k) - \mY^k}^2 \nonumber \\
		&+ (1-p)\E\norm{\nabla_{S^k} F(\mX^{k+1}) - \nabla_{S^k} F(\mX^{k})}^2.
	\end{align}
	Let us bound the last term in \eqref{eq: adjac-y}. We have
	\begin{align}
		\label{eq: batch-grad}
		\E\norm{\nabla_{S^k} F(\mX^{k+1}) - \nabla_{S^k} F(\mX^{k})}^2 &= \E\sum\limits_{i=1}^m\norm{\nabla_{S_i^k} F_i(x_i^{k+1}) - \nabla_{S_i^k} F_i(x_i^{k})}^2 \nonumber \\
		&= \E\sum\limits_{i=1}^m\norm{\frac{1}{b} \sum\limits_{\ell \in \{S_i^k\}} \nabla f_{i\ell}(x_i^{k+1}) - \nabla f_{i\ell}(x_i^{k})}^2 \nonumber \\
		&= \E\sum\limits_{i=1}^m \frac{1}{b^2}\norm{\sum\limits_{\ell \in \{S_i^k\}} \nabla f_{i\ell}(x_i^{k+1}) - \nabla f_{i\ell}(x_i^{k})}^2 \nonumber \\
		&\leq \E\sum\limits_{i=1}^m \frac{1}{b}\sum\limits_{\ell \in \{S_i^k\}}\norm{\nabla f_{i\ell}(x_i^{k+1}) - \nabla f_{i\ell}(x_i^{k})}^2 \nonumber \\
		&\leq \E\sum\limits_{i=1}^m \frac{\hat{L}^2}{b}\sum\limits_{\ell \in \{S_i^k\}}\norm{x_i^{k+1} - x_i^{k}}^2 \nonumber \\
		&= \E\sum\limits_{i=1}^m \hat{L}^2\norm{x_i^{k+1} - x_i^{k}}^2 \nonumber \\
		&= \hat{L}^2\E\norm{\mX^{k+1} - \mX^k}^2.
	\end{align}
	Hence, substituting \eqref{eq: batch-grad} into \eqref{eq: adjac-y}, choosing $\beta$ as $1$ and using $L \leq \hat{L}$ (because of Jensen's inequality), one can obtain
	\begin{align}
		\label{eq: adjust-y-end} 
		\E\norm{\mY^{k+1} - \mY^k}^2 \leq (1 + p)\hat{L}^2\E\norm{\mX^{k+1} - \mX^k}^2 + 2p\E\norm{\nabla F(\mX^k) - \mY^k}^2.
	\end{align}
	The second expression can be bounded in the following way:
	\begin{align}
		\label{eq: adjac-x}
		\norm{\mX^{k+1} - \mX^{k}}^2 & = \norm{(\mM(k) - \mI_{md})\mX^k - \eta \mV^k}^2 \nonumber \\
		& = \norm{(\mM(k) - \mI_{md})(\mX^k - (\vone_m \otimes \mI_d)\bar\vx^k) - \eta \mV^k}^2  \nonumber \\
		& \le 2\widetilde{C}\norm{\mX^k - (\vone_m \otimes \mI_d)\bar\vx^k}^2 + 2\eta^2\norm{\mV^k}^2  \nonumber\\
		& = 2\widetilde{C}\norm{\mX^k - (\vone_m \otimes \mI_d)\bar\vx^k}^2 + 2\eta^2\norm{\mV^k - (\vone_m \otimes \mI_d)\bar\vv^k}^2 + 2\eta^2 m \norm{\bar\vv^k}^2.
	\end{align}
	 Taking the expectation, we claim the final result.
\end{proof}
\noindent Now we denote some expressions from \cref{lem: recursion-grad-lemma} and \cref{lem: extra-expr-bound} as follows
\begin{align*}
	\Delta^k &= \E\norm{\nabla F(\mX^{k}) - \mY^{k}}^2,\\
	\Delta_x^k &= \E\norm{\mX^{k+1} - \mX^k}^2.
\end{align*}
Consequently, substituting the bound of a first expression from \cref{lem: extra-expr-bound} in \cref{lem: lyap-omegas}, we get
\begin{align}
	\label{eq: bounds-letters}
	\Omega_1^{k+1} &\leq \left(1 - \frac{\rho}{2}\right)\Omega_1^k + \frac{3\eta^2}{\rho}\Omega_2^k, \nonumber \\
	\Omega_2^{k+1} &\leq \left(1 - \frac{\rho}{2}\right)\Omega_2^k + \frac{3}{\rho}(2p\Delta^k + (1 + p)\hat{L}^2\Delta_x^k).
\end{align}
Moreover, we can write
\begin{align*}
	\Delta^{k+1} &\leq (1 - p)\Delta^k + \frac{(1-p)\hat{L}^2}{b}\Delta_x^k,\\
	\Delta_x^k &\leq 2\widetilde{C}\Omega_1^k + 2\eta^2\Omega_2^k + 2\eta^2 m \E\norm{\bar\vv^k}^2.
\end{align*}
\subsection{Proof of Theorem \ref{thm:nonconvex-setup}} \label{upper}
\begin{proof}
	Rewriting the descent lemma in new notation, we have
	\begin{align*}
		\E F(\bar\vx^{k+1}) \le \E F(\bar\vx^{k}) - \frac{\eta}{2}\E \norm{\nabla F(\bar\vx^{k})}^2 + \frac{\eta }{m}\Delta^k + \frac{\eta L^2}{m}\Omega_1^k - \left(\frac{\eta}{2} - \frac{\eta^2 L}{2}\right)\E\norm{\bar\vv^{k}}^2.
	\end{align*}
	Also we can construct a Lyapunov's function in the following way:
	\begin{align}
		\label{eq: lyap-function}
		\Phi_k = \E F(\bar\vx^{k}) - F^* + C_0\Delta^k + s_1\Omega_1^k + s_2\Omega_2^k.
	\end{align}
	Then, adding some terms to the left-hand side of descent lemma mentioned above, one can obtain
	\begin{align*}
		\Phi_{k+1} &= \E F(\bar\vx^{k+1}) - F^* + C_0\Delta^{k+1} + s_1\Omega_1^{k+1} + s_2\Omega_2^{k+1} \nonumber \\
		&\leq \E F(\bar\vx^{k}) - F^* - \frac{\eta}{2}\E \norm{\nabla F(\bar\vx^{k})}^2 + \frac{\eta }{m}\Delta^k + \frac{\eta L^2}{m}\Omega_1^k - \left(\frac{\eta}{2} - \frac{\eta^2 L}{2}\right)\E\norm{\bar\vv^{k}}^2 \nonumber \\
		&+ C_0\left((1 - p)\Delta^k + \frac{(1 - p)\hat{L}^2}{b}\Delta_x^k\right) + s_1\left(\left(1 - \frac{\rho}{2}\right)\Omega_1^k + \frac{3\eta^2}{\rho}\Omega_2^k\right)\nonumber \\
		&+ s_2\left(\left(1 - \frac{\rho}{2}\right)\Omega_2^k + \frac{3}{\rho}(2p\Delta^k + (1 + p)\hat{L}^2\Delta_x^k)\right).
	\end{align*}
	Grouping the terms, we get
	\begin{align}
		\label{eq: lyap-descent-start}
		\Phi_{k+1} &\leq \E F(\bar\vx^{k}) - F^* - \frac{\eta}{2}\E \norm{\nabla F(\bar\vx^{k})}^2 + \Delta^k\left((1-p)C_0 + \frac{\eta}{m} + \frac{6ps_2}{\rho}\right) \nonumber \\
		&+ \Omega_1^k \left(\frac{\eta L^2}{m} + \left(1 - \frac{\rho}{2}\right)s_1\right) + \Omega_2^k\left(\frac{3\eta^2 s_1}{\rho} + \left(1 - \frac{\rho}{2}\right)s_2\right) \nonumber \\
		&+ \Delta_x^k\left(\frac{(1-p)\hat{L}^2C_0}{b} + \frac{3(1+p)\hat{L}^2s_2}{\rho}\right) - \left(\frac{\eta}{2} - \frac{\eta^2 L}{2}\right)\E\norm{\bar\vv^{k}}^2.
	\end{align}
	Hence, denoting
	\begin{align*}
		A &= (1-p)C_0 + \frac{\eta}{m} + \frac{6ps_2}{\rho}, \nonumber\\
		B &= \frac{(1-p)\hat{L}^2C_0}{b} + \frac{3(1+p)\hat{L}^2s_2}{\rho}, \nonumber\\
		C &= \frac{\eta L^2}{m} + \left(1 - \frac{\rho}{2}\right)s_1 \nonumber, \\
		D &= \frac{3\eta^2 s_1}{\rho} + \left(1 - \frac{\rho}{2}\right)s_2,
	\end{align*}
	and substituting these constants into \eqref{eq: lyap-descent-start}, we get
	\begin{align}
		\label{eq: lyap-in-const-descent-with-dx}
		\Phi_{k+1} \leq \E F(\bar\vx^{k}) - F^* - \frac{\eta}{2}\E \norm{\nabla F(\bar\vx^{k})}^2 + A\Delta^k 
		+ C\Omega_1^k  + D\Omega_2^k
		+ B\Delta_x^k - \left(\frac{\eta}{2} - \frac{\eta^2 L}{2}\right)\E\norm{\bar\vv^{k}}^2.
	\end{align}
	Using the definition of $\Delta_x^k$ and \cref{lem: extra-expr-bound} in \eqref{eq: lyap-in-const-descent-with-dx}, we finally have
	\begin{align}
		\label{eq: lyap-in-const-descent}
		\Phi_{k+1} &\leq \E F(\bar\vx^{k}) - F^* - \frac{\eta}{2}\E \norm{\nabla F(\bar\vx^{k})}^2 + A\Delta^k 
		+ (C + 2\widetilde{C}B)\Omega_1^k  + (D + 2\eta^2B)\Omega_2^k \nonumber \\
		&- \left(\frac{\eta}{2} - \frac{\eta^2 L}{2} - 2\eta^2nB\right)\E\norm{\bar\vv^{k}}^2 \nonumber \\
		&= \E F(\bar\vx^{k}) - F^* + s_1\Omega_1^k + s_2\Omega_2^k + A\Delta^k - \frac{\eta}{2}\E \norm{\nabla F(\bar\vx^{k})}^2 \nonumber \\  
		&+ (C + 2\widetilde{C}B - s_1)\Omega_1^k  + (D + 2\eta^2B -s_2)\Omega_2^k - \left(\frac{\eta}{2} - \frac{\eta^2 L}{2} - 2\eta^2mB\right)\E\norm{\bar\vv^{k}}^2.
	\end{align}
	Looking at the form of the descent lemma, we want to require the following:
	\begin{enumerate}
		\item $C_0 = A$.
		\item $\frac{\eta}{2} - \frac{\eta^2 L}{2} - 2\eta^2mB \geq 0$.
		\item $C + 2\widetilde{C}B - s_1 \leq 0$.
		\item $D + 2\eta^2B - s_2 \leq 0$.
	\end{enumerate}
	Before we start to solve this system relative to $\eta$, we assume the following form of constants $s_1$ and $s_2$:
	\begin{align}
		\label{eq: const-form}
		s_1 &= \frac{c_1(\rho, p, b)\hat{L}^2}{mL}, \nonumber \\
		s_2 &= \frac{c_2(\rho, p, b)L}{m\hat{L}^2}.
	\end{align}
	\textbf{First part}\\
	From the first requirement we get
	\begin{align}
		\label{eq: C_0-est}
		C_0 = \frac{\eta}{mp} + \frac{6s_2}{\rho}.
	\end{align}
	\textbf{Second part}\\
	From the second requirement:
	\begin{align}
		\label{eq: 2nd-start}
		\frac{\eta}{2} - \frac{\eta^2 L}{2} - 2\eta^2m \left(\frac{(1-p)\hat{L}^2C_0}{b} + \frac{3(1+p)\hat{L}^2s_2}{\rho}\right)\geq 0.
	\end{align}
	After substituting $C_0$ into \eqref{eq: 2nd-start}, we have
	\begin{align*}
		\frac{\eta}{2} - \frac{\eta^2 L}{2} -  \frac{2\eta^3(1-p)\hat{L}^2}{bp} - \frac{12\eta^2m(1-p)\hat{L}^2s_2}{b\rho}- \frac{6\eta^2m(1+p)\hat{L}^2s_2}{\rho}\geq 0.
	\end{align*}
	Using \eqref{eq: const-form}, one can obtain
	\begin{align*}
		\frac{\eta}{2} - \frac{\eta^2 L}{2} -  \frac{2\eta^3(1-p)\hat{L}^2}{bp} - \frac{12\eta^2(1-p)Lc_2(\rho, p, b)}{b\rho }- \frac{6\eta^2(1+p)Lc_2(\rho, p, b)}{\rho}\geq 0.
	\end{align*}
	Dividing both sides by $\eta$:
	\begin{align*}
		\frac{1}{2} - \frac{\eta L}{2} -  \frac{2\eta^2(1-p)\hat{L}^2}{bp} - \frac{12\eta (1-p)L c_2(\rho, p, b)}{b\rho }- \frac{6\eta (1+p)L c_2(\rho, p, b)}{\rho }\geq 0.
	\end{align*}
	Multiplying the left side by $2$ and entering a variable $r = \eta L$,
	\begin{align}
		\label{eq: unmod-2}
		1 - r -  \frac{4(1-p)r^2 \hat{L}^2}{bpL^2} - \frac{24(1-p) c_2(\rho, p, b)r}{b\rho}- \frac{12(1+p) c_2(\rho, p, b)r}{\rho}\geq 0.
	\end{align}
	Consequently, we could consider the next inequality
	\begin{align}
		\label{eq: mod-2}
		1 - r -  \frac{4(1-p)r^2\hat{L}^2}{bpL^2} - \frac{36c_2(\rho, p, b)r}{\rho}\geq 0.
	\end{align}
	Since $\frac{24(1-p)}{b} + 12(1+p) \leq 36$, if $r_0 = \eta_0 L$ satisfies \eqref{eq: mod-2}, then $r_0$ satisfies \eqref{eq: unmod-2} too. Hence, we could solve \eqref{eq: mod-2} to find a bound on $r$. Therefore,
	\begin{align*}
		r &\leq \frac{-\left(1 + \frac{36c_2(\rho, p, b)}{\rho}\right) + \sqrt{\left(1 + \frac{36c_2(\rho, p, b)}{\rho}\right)^2 + \frac{16(1-p)\hat{L^2}}{bpL^2}}}{\frac{8(1-p)\hat{L}^2}{bpL^2}} \\
		&= \frac{2}{\left(1 + \frac{36c_2(\rho, p, b)}{\rho}\right) + \sqrt{\left(1 + \frac{36c_2(\rho, p, b)}{\rho}\right)^2 + \frac{16(1-p)\hat{L^2}}{bpL^2}}}.
	\end{align*}
	Then,
	\begin{align*}
		\eta \leq \frac{2}{L\left(\left(1 + \frac{36c_2(\rho, p, b)}{\rho}\right) + \sqrt{\left(1 + \frac{36c_2(\rho, p, b)}{\rho}\right)^2 + \frac{16(1-p)\hat{L^2}}{bpL^2}}\right)}.
	\end{align*}
	Using $(a+b)^2 \leq 2a^2 + 2b^2$, we claim that
	\begin{align}
		\label{eq: eta-bound-2}
		\eta \leq \frac{2}{L\left(\left(1 + \frac{36c_2(\rho, p, b)}{\rho}\right) + \sqrt{2 + \frac{2592c_2^2(\rho, p, b)}{\rho^2} + \frac{16(1-p)\hat{L^2}}{bpL^2}}\right)}.
	\end{align}
	\textbf{Third part}\\
	From the third requirement one can obtain
	\begin{align}
		\label{eq: 3rd-start}
		\frac{\eta L^2}{m} + \left(1 - \frac{\rho}{2}\right)s_1 + 2\widetilde{C}\left(\frac{(1-p)\hat{L}^2C_0}{b} + \frac{3(1+p)\hat{L}^2s_2}{\rho}\right) - s_1 \leq 0.
	\end{align}
	Substituting $C_0$ in \eqref{eq: 3rd-start}:
	\begin{align*}
		\frac{\eta L^2}{m} - \frac{\rho}{2}s_1 + \frac{2\widetilde{C}(1-p)\hat{L}^2}{b}\left(\frac{\eta}{mp} + \frac{6s_2}{\rho}\right) + \frac{6\widetilde{C}(1+p)\hat{L}^2s_2}{\rho}\leq 0.
	\end{align*}
	Hence, we get
	\begin{align*}
		\frac{\eta L^2}{m} - \frac{\rho}{2}s_1 + \frac{2\widetilde{C}(1-p)\hat{L}^2\eta}{bmp} + \frac{12s_2\widetilde{C}(1-p)\hat{L}^2}{b\rho} + \frac{6\widetilde{C}(1+p)\hat{L}^2s_2}{\rho}\leq 0.
	\end{align*}
	Combining two last terms:
	\begin{align*}
		\frac{\eta L^2}{m} - \frac{\rho}{2}s_1 + \frac{2\widetilde{C}(1-p)\hat{L}^2\eta}{bmp} + \frac{\widetilde{C}\hat{L}^2s_2}{\rho}\left(\frac{12(1-p)}{b} + 6(1+p)\right)\leq 0.
	\end{align*}
	Grouping terms with $\eta$:
	\begin{align*}
		\eta\left(\frac{L^2}{m} + \frac{2\widetilde{C}(1-p)\hat{L}^2}{bmp}\right) - \frac{\rho}{2}s_1 + \frac{\widetilde{C}\hat{L}^2s_2}{\rho}\left(\frac{12(1-p)}{b} + 6(1+p)\right)\leq 0.
	\end{align*}
	Using the \eqref{eq: const-form}, one can obtain
	\begin{align*}
		\eta\left(\frac{L^2}{m} + \frac{2\widetilde{C}(1-p)\hat{L}^2}{bmp}\right) + \frac{\widetilde{C}L c_2(\rho, p, b)}{\rho m}\left(\frac{12(1-p)}{b} + 6(1+p)\right)\leq \frac{c_1(\rho, p, b)\hat{L}^2\rho}{2mL}.
	\end{align*}
	Consequently,
	\begin{align*}
		\frac{2\eta L^2}{\rho m}\left(1 + \frac{2\widetilde{C}(1-p)\hat{L}^2}{bpL^2}\right) + \frac{2\widetilde{C}L c_2(\rho, p, b)}{\rho^2 m}\left(\frac{12(1-p)}{b} + 6(1+p)\right)\leq \frac{c_1(\rho, p, b)\hat{L}^2}{mL}.
	\end{align*}
	Multiplying both sides by $\frac{m}{L}$:
	\begin{align}
		\label{eq: unmod-3rd}
		\frac{2\eta L}{\rho}\left(1 + \frac{2\widetilde{C}(1-p)\hat{L}^2}{bpL^2}\right) + \frac{2\widetilde{C}c_2(\rho, p, b)}{\rho^2}\left(\frac{12(1-p)}{b} + 6(1+p)\right)\leq \frac{c_1(\rho, p, b)\hat{L}^2}{L^2}.
	\end{align}
	Then, we can consider next inequality
	\begin{align}
		\label{eq: mod-3rd}
		\frac{2\eta L}{\rho}\left(1 + \frac{2\widetilde{C}(1-p)\hat{L}^2}{bpL^2}\right) + \frac{36\widetilde{C}c_2(\rho, p, b)}{\rho^2}\leq c_1(\rho, p, b),
	\end{align}
	where we use $\frac{12(1-p)}{b} + 6(1+p) \leq 18 - 6p \leq 18$. Hence, if we choose $\eta$ equal to some $\eta_0$ at which \eqref{eq: mod-3rd} holds, then \eqref{eq: unmod-3rd} holds too. Therefore, we can bound $\eta$:
	\begin{align*}
		\eta \leq \frac{\frac{\rho c_1(\rho, p, b)\hat{L}^2}{L^2} - \frac{36\widetilde{C}c_2(\rho, p, b)}{\rho}}{2L\left(1 + \frac{2\widetilde{C}(1-p)\hat{L}^2}{bpL^2}\right)}.
	\end{align*}
        Using $\hat{L} \geq L$:
        \begin{align}
		\label{eq: eta-bound-3}
		\eta \leq \frac{\rho c_1(\rho, p, b) - \frac{36\widetilde{C}c_2(\rho, p, b)}{\rho}}{2L\left(1 + \frac{2\widetilde{C}(1-p)\hat{L}^2}{bpL^2}\right)}.
	\end{align}
	\textbf{Fourth part}\\
	From the fourth requirement, we get:
	\begin{align}
		\label{eq: 4th-start}
		\frac{3\eta^2 s_1}{\rho} + \left(1 - \frac{\rho}{2}\right)s_2 + 2\eta^2\left(\frac{(1-p)\hat{L}^2C_0}{b} + \frac{3(1+p)\hat{L}^2s_2}{\rho}\right) - s_2 \leq 0.
	\end{align}
	Substituting the \eqref{eq: C_0-est}, we have
	\begin{align*}
		\frac{3\eta^2 s_1}{\rho} - \frac{\rho}{2}s_2 + \frac{2(1-p)\eta^3\hat{L}^2}{bmp} + \frac{12(1-p)\eta^2\hat{L}^2s_2}{b\rho} + \frac{6\eta^2(1+p)\hat{L}^2s_2}{\rho} \leq 0.
	\end{align*}
	Then, after combining last two terms, we get
	\begin{align*}
		\frac{3\eta^2 s_1}{\rho} - \frac{\rho}{2}s_2 + \frac{2(1-p)\eta^3\hat{L}^2}{bmp} + \frac{\eta^2\hat{L}^2s_2}{\rho}\left(\frac{12(1-p)}{b} + 6(1+p)\right) \leq 0.
	\end{align*}
	Using the \eqref{eq: const-form}, one can obtain
	\begin{align*}
		\frac{3\eta^2 c_1(\rho, p, b)\hat{L}^2}{m\rho L} - \frac{\rho c_2(\rho, p, b)L}{2m\hat{L}^2} + \frac{2(1-p)\eta^3\hat{L}^2}{bmp} + \frac{\eta^2Lc_2(\rho, p, b)}{\rho m}\left(\frac{12(1-p)}{b} + 6(1+p)\right) \leq 0.
	\end{align*}
	Consequently, 
	\begin{align}
		\label{eq: unmod-4th}
		\frac{\eta L}{\rho} \left(\frac{3\eta c_1(\rho, p, b)\hat{L}^2}{mL^2} + \frac{2\rho(1-p)\eta^2\hat{L}^2}{bmpL} + \frac{\eta c_2(\rho, p, b)}{m}\left(\frac{12(1-p)}{b} + 6(1+p)\right)\right) - \frac{\rho c_2(\rho, p, b)L}{2m\hat{L}^2} \leq 0.
	\end{align}
	If we choose $\eta \leq \frac{\rho}{L}$, then we could consider next inequality
	\begin{align}
		\label{eq: mod-4th}
		\frac{3\eta c_1(\rho, p, b)\hat{L}^2}{mL^2} + \frac{2\rho(1-p)\eta^2\hat{L}^2}{bmpL} + \frac{\eta c_2(\rho, p, b)}{m}\left(\frac{12(1-p)}{b} + 6(1+p)\right) - \frac{\rho c_2(\rho, p, b)L}{2m\hat{L}^2} \leq 0.
	\end{align}
	If \eqref{eq: mod-4th} holds for some $\eta_0$, where $\eta_0 L \leq \rho$, then \eqref{eq: unmod-4th} holds respectively. Hence, we could solve \eqref{eq: mod-4th} relative to $\eta$. For convenience, multiply both sides of the equation by $m$:
	\begin{align*}
		\frac{3\eta c_1(\rho, p, b)\hat{L}^2}{L^2} + \frac{2\rho(1-p)\eta^2\hat{L}^2}{bpL} + {\eta c_2(\rho, p, b)}\left(\frac{12(1-p)}{b} + 6(1+p)\right) - \frac{\rho c_2(\rho, p, b)L}{2\hat{L}^2} \leq 0.
	\end{align*}
	Moreover, we could use $\frac{12(1-p)}{b} + 6(1+p) \leq 18$ and $\rho \leq 1$. Therefore, using $L \leq \hat{L}$, we can consider
	\begin{align}
		\label{eq: mod-4th-2}
		\frac{\hat{L}^2}{L^2}({3\eta c_1(\rho, p, b)} + 18\eta c_2(\rho, p, b)) + \frac{2(1-p)\eta^2\hat{L}^2}{bpL} - \frac{\rho c_2(\rho, p, b)L}{2\hat{L}^2} \leq 0.
	\end{align}
	Then, if $\eta_0$ satisfies \eqref{eq: mod-4th-2}, consequently it satisfies \eqref{eq: mod-4th} and \eqref{eq: unmod-4th}. So, we could solve \eqref{eq: mod-4th-2}:
	\begin{align*}
		\frac{2(1-p)\eta^2\hat{L}^2}{bpL} + \frac{\eta\hat{L^2}}{L^2}\left(3 c_1(\rho, p, b)+{18 c_2(\rho, p, b)}\right) - \frac{\rho c_2(\rho, p, b)L}{2\hat{L}^2} \leq 0.
	\end{align*}
	Solving the inequality, we get
	\begin{align*}
		\eta &\leq \frac{-\left(3 c_1(\rho, p, b)+{18 c_2(\rho, p, b)}\right) + \sqrt{\left(3 c_1(\rho, p, b)+{18 c_2(\rho, p, b)}\right)^2 + \frac{8(1-p)}{bp}\frac{\rho c_2(\rho, p, b)}{2}}}{\frac{4(1-p)\hat{L}^2}{bpL}} \\
		&= \frac{4{\rho c_2(\rho, p, b)}}{{4L}\left(\left(3 c_1(\rho, p, b)+{18 c_2(\rho, p, b)}\right) + \sqrt{\left(3 c_1(\rho, p, b)+{18 c_2(\rho, p, b)}\right)^2 + \frac{8(1-p)}{bp}\frac{\rho c_2(\rho, p, b)}{2}\frac{L^4}{\hat{L}^4}}\right)} \\
		&= \frac{{\rho c_2(\rho, p, b)}}{{L}\left(\left(3 c_1(\rho, p, b)+{18 c_2(\rho, p, b)}\right) + \sqrt{\left(3 c_1(\rho, p, b)+{18 c_2(\rho, p, b)}\right)^2 + \frac{8(1-p)}{bp}\frac{\rho c_2(\rho, p, b)}{2}\frac{L^4}{\hat{L}^4}}\right)}.
	\end{align*}
	Using that $(a+b)^2 \leq 2a^2 + 2b^2$ and $\frac{L^4}{\hat{L}^4} \leq \frac{\hat{L^2}}{L^2}$, we can give a bit rough estimate of $\eta$:
	\begin{align}
		\label{eq: eta-bound-4}
		\eta \leq \frac{\rho c_2(\rho, p, b)}{L\left(3 c_1(\rho, p, b)+18 c_2(\rho, p, b) + \sqrt{18 c_1^2(\rho, p, b)+648 c_2^2(\rho, p, b) + \frac{4(1-p)\rho c_2(\rho, p, b)}{bp}\frac{\hat{L^2}}{L^2}}\right)}.
	\end{align}
	\textbf{Selection of $c_1(\rho, p, b)$ and $c_2(\rho, p, b)$}\\
	Let us take these parameters in the following way
	\begin{align*}
		c_1(\rho, p, b) &= 2\widetilde{C}(1+\rho)\left(\sqrt{\frac{(1-p)\hat{L}^2}{bpL^2}} + \frac{1}{\widetilde{C}}\right), \nonumber \\
		c_2(\rho, p, b) &= \frac{\rho^2}{18\widetilde{C}}.
	\end{align*}
	From \eqref{eq: eta-bound-2}, we get
	\begin{align*}
		\eta \leq \frac{2}{L\left(\left(1 + \frac{2\rho}{\widetilde{C}}\right) + \sqrt{2 + \frac{8\rho^2}{\widetilde{C}^2} + \frac{16(1-p)\hat{L}^2}{bpL^2}}\right)}.
	\end{align*}
	Consequently, we could roughen the estimate by $\rho \leq 1$:
	\begin{align}
		\label{eq: final-bound-2}
		\eta \leq \frac{2}{L\left(\left(1 + \frac{2}{\widetilde{C}}\right) + \sqrt{2 + \frac{8}{\widetilde{C}^2} + \frac{16(1-p)\hat{L}^2}{bpL^2}}\right)}.
	\end{align}
	From \eqref{eq: eta-bound-3}, one can obtain
	\begin{align*}
		\eta \leq \frac{2\widetilde{C}(\rho + \rho^2)\left(\sqrt{\frac{(1-p)\hat{L}^2}{bpL^2}} +\frac{1}{\widetilde{C}}\right)  - 2\rho}{2L\left(1 + \frac{2\widetilde{C}(1-p)\hat{L}^2}{bpL^2}\right)}.
	\end{align*}
	Hence, final bound is
	\begin{align}
		\label{eq: final-bound-3}
		\eta \leq \frac{2\rho^2 + 2\widetilde{C}(\rho^2 + \rho)\sqrt{\frac{(1-p)\hat{L}^2}{bpL^2}}}{L\left(1 + \frac{2\widetilde{C}(1-p)\hat{L}^2}{bpL^2}\right)}.
	\end{align}
	From \eqref{eq: eta-bound-4}, we have
	\begin{align*}
		\eta \leq \frac{\rho^3}{18\widetilde{C}L\left(6\widetilde{C}(1+\rho)\left(\sqrt{\frac{(1-p)\hat{L}^2}{bpL^2}} + \frac{1}{\widetilde{C}}\right)+\frac{\rho^2}{\widetilde{C}} + \sqrt{72\widetilde{C}^2(1+\rho)^2\left(\sqrt{\frac{(1-p)\hat{L}^2}{bpL^2}} + \frac{1}{\widetilde{C}}\right)^2+\frac{2\rho^4}{\widetilde{C}^2}  + \frac{2(1-p)\rho^3\hat{L}^2}{9\widetilde{C}bp}L^2}\right)}.
	\end{align*}
	Using that $(a+b)^2 \leq 2a^2 + 2b^2$ and $\rho \leq 1$, we claim
	\begin{align}
		\label{eq: final-bound-4}
		\eta \leq \frac{\rho^3}{18\widetilde{C}L\left(12 + \frac{1}{\widetilde{C}} + 12\widetilde{C}\sqrt{\frac{(1-p)\hat{L}^2}{bpL^2}} + \sqrt{288 + \frac{2}{\widetilde{C}^2} + \frac{288\widetilde{C}^2(1-p)\hat{L}^2}{bpL^2} + \frac{2(1-p)\hat{L}^2}{9\widetilde{C}bpL^2}}\right)}.
	\end{align}
	From $\eta \leq \frac{\rho}{L}$ and bounds \eqref{eq: final-bound-2}, \eqref{eq: final-bound-3} and \eqref{eq: final-bound-4} the next result follows:
	\begin{align*}
		\Phi_{k+1} \leq \Phi_k -  \frac{\eta}{2}\E \norm{\nabla F(\bar\vx^{k})}^2.
	\end{align*}
	Summarizing over $t$, we claim
	\begin{align*}
		\frac{1}{N}\sum\limits_{k=0}^{N-1}\E \norm{\nabla F(\bar\vx^{k})}^2 \leq \frac{2(\Phi_0 - \Phi_k)}{\eta N},
	\end{align*}
        where $\Phi_0 = F(x^0) - F^* = \Delta$ because of initialization. Hence, for reaching $\frac{1}{N}\sum\limits_{k=0}^{N-1}\E \norm{\nabla F(\bar\vx^{k})}^2 \leq  \epsilon^2$, we need 
	\begin{align*}
		N = \mathcal{O}\left(\frac{L\Delta\left(1 + \sqrt{\frac{(1-p)\hat{L}^2}{bpL^2}}\right)}{\rho^3 \epsilon^2}\right)
	\end{align*}
	iterations. Choosing $\hat{x}^N$ uniformly from $\{\bar\vx^{k}\}_{k=0}^{N-1}$, we claim the final result.
\end{proof}
\subsection{Proof of Corollary \ref{cor:nonconvex_setup}} \label{upper-separate}
\begin{proof}
    First, we need to clarify that multi-stage consensus technique allows to avoid $\chi^3$ factor in \cref{thm:nonconvex-setup}, but apply $\chi$ to a number of communications. Hence, choosing $b = \frac{\sqrt{n}\hat{L}}{L}, p = \frac{b}{n + b}$, we get
    \begin{align*}
        N_{comm} = \mathcal{O}\left(\frac{\chi L\Delta\left(1 + \sqrt{\frac{n\hat{L^2}}{b^2L^2}}\right)}{\epsilon^2}\right) = \mathcal{O}\left(\frac{\chi L\Delta}{\epsilon^2}\right), 
    \end{align*}
    while the number of iterations is 
    \begin{align*}
        N_{iter} = \mathcal{O}\left(\frac{L\Delta\left(1 + \sqrt{\frac{n\hat{L^2}}{b^2L^2}}\right)}{\epsilon^2}\right) = \mathcal{O}\left(\frac{L\Delta}{\epsilon^2}\right).
    \end{align*}
    Moreover, number of local computations (in average) is equal to
    \begin{align*}
        n + N_{iter}(pn + (1-p)b) = n + C\frac{L\Delta}{\epsilon^2}\left(\frac{2n\sqrt{n}\frac{\hat{L}}{L}}{n + \sqrt{n}\frac{\hat{L}}{L}}\right) \leq n + C\frac{\sqrt{n}\hat{L}\Delta}{\epsilon^2} = \mathcal{O}\left(n + \frac{\sqrt{n}\hat{L}\Delta}{\epsilon^2}\right),
    \end{align*}
    where $C$ is a constant from $\mathcal{O}(\cdot)$. This finishes the proof.
\end{proof}
\subsection{Lower bounds for nonconvex setting}\label{proof_lower}
The main idea of lower bound construction is to provide an example of a bad function for which we can estimate the minimum required number of iterations or oracle calls to solve the problem. Hence, we need to consider some class of problems, oracles, and algorithms among which we shall dwell.\\
\noindent Before we start, let us propose some additional facts for a clear proof. \\ \noindent Consider the next function:
\begin{align}
    \label{eq: bad-func}
    l(x) = -\Psi(1)\Phi(\left[x\right]_1) + \sum\limits_{j=2}^{d}\left(\Psi(-\left[x\right]_{j-1})\Phi(-\left[x\right]_{j})  - \Psi(\left[x\right]_{j-1})\Phi(\left[x\right]_{j})\right), 
\end{align}
where
\begin{align}
    \label{eq: extra-bad}
    \Psi(z) &= \begin{cases}
        0 \qquad z \leq \frac{1}{2};\\
        \exp\left(1 - \frac{1}{(2z - 1)^2}\right) \qquad z > \frac{1}{2},
    \end{cases} \nonumber\\
    \Phi(z) &= \sqrt{e}\int\limits_{-\infty}^{z} e^{-\frac{t^2}{2}}\, dt.
\end{align}
\textbf{Properties}.
It has already been shown in \cite{arjevani2023lower} (see Lemma 2) that $l(x)$ satisfies the following properties:
\begin{enumerate}
    \item $\forall x \in \mathbb{R}^d \ l(x) - \inf_{x} l(x) \leq \Delta_0 d \ $ with $\Delta_0 = 12$.
    \item $l(x)$ is $L_0$-smooth with $L_0 = 152$.
    \item $\forall x \in \mathbb{R}^d \ \norm{\nabla l(x)}_{\infty} \leq G_0$ with $G_0 = 23$.
    \item $\forall x \in \mathbb{R}^d: \left[x\right]_d = 0 \ \norm{\nabla l(x)}_{\infty} \geq 1$.
\end{enumerate}
Moreover, let us introduce the next definition
\begin{align}
    \label{eq: prog}
    \text{prog}(x) = \begin{cases}
        0 \qquad x=0;\\
        \max_{1 \leq j \leq d}\{j: \left[x\right]_j \neq 0\} \qquad \text{otherwise}.
    \end{cases}
\end{align}
Hence, the function $f$ is called zero-chain, if 
\begin{align*}
    \text{prog}(\nabla f(x)) \leq \text{prog}(x) + 1.
\end{align*}
This means that if we start at point $x = 0$, after a gradient estimation we earn at most one non-zero coordinate of $x$. What is more, $l(x)$ is zero-chain function.\\
\noindent Let us formulate auxiliary lemmas which help to estimate the lower bound.
\begin{lemma}
    \label{lem: smooth-prop}
    Consider the function $l(x)$ which is defined in \eqref{eq: bad-func}. Moreover, let us define 
    \begin{align*}
        l_j(x) = l_j([x]_{j-1}, [x]_j) = \begin{cases}
            -\Psi(1)\Phi(\left[x\right]_1), \qquad &j=1,\\
            \Psi(-\left[x\right]_{j-1})\Phi(-\left[x\right]_{j})  - \Psi(\left[x\right]_{j-1})\Phi(\left[x\right]_{j}), \qquad &j > 1.
        \end{cases}
    \end{align*}
    Therefore, $l_j(x)$ is $L_0$-smooth for all $j$, where $L_0$ is defined above.
    \begin{proof}
        For the case $j = 1$ it is obvious, since it is enough to consider $l(x)$ with $d = 1$. For $j > 1$ we consider $l(x)$ with $d = j$. It is known that $l(x)$ is $L_0$-smooth (see \textbf{Property} 2). It means that for all $x, y \in \mathbb{R}^j$ the next inequality holds:
        \begin{align*}
            \norm{\nabla l(y) - \nabla l(x)} \leq L_0\norm{y - x}.
        \end{align*}
        Hence, we can get $x$ and $y$ as follows: $[x]_k = [y]_k = 0$ for all $k = 1, \ldots, j - 2$. Consequently,
        \begin{enumerate}
            \item $[\nabla l(y)]_k = [\nabla l(x)]_k$ for all $k = 1, \ldots, j - 2$, since these components of gradient can depend on all coordinates of variable except $(j - 1)$-\textit{th} and $j$-\textit{th}.
            \item Due to the zero-chain property of $l(x)$, $(j - 2)$-\textit{th} coordinate of variable cannot change $(j - 1)$-\textit{th} one after computation of a gradient, that is, $\frac{\partial l_{j - 1}}{\partial x_{j - 1}}(0, x_{j-1}) = 0$.
        \end{enumerate}
        As a result, last two components of $\nabla l(x)$ are equal to $\frac{\partial l_{j}}{\partial x_{j - 1}}(x_{j-1}, x_{j})$ and $\frac{\partial l_{j}}{\partial x_{j}}(x_{j-1}, x_{j})$ respectively, and this is exactly the gradient of $l_j(x_{j-1}, x_j)$. Using that other coordinates of $x$ and $y$ are equal to zero, we conclude that $l_j(x_{j-1}, x_j)$ is $L_0$-smooth.
    \end{proof}
\end{lemma}
\begin{lemma}
\label{lem: effect-L}
    Consider the function $l(x)$ which is defined in \eqref{eq: bad-func}. Suppose that
    \begin{align*}
        \hat{l}_1(x) &= -\Psi(1)\Phi(\left[x\right]_1) + \sum\limits_{j \text{ odd; } j \geq 2}\left(\Psi(-\left[x\right]_{j-1})\Phi(-\left[x\right]_{j})  - \Psi(\left[x\right]_{j-1})\Phi(\left[x\right]_{j})\right), \\
        \hat{l}_2(x) &= \sum\limits_{j \text{ even}}\left(\Psi(-\left[x\right]_{j-1})\Phi(-\left[x\right]_{j})  - \Psi(\left[x\right]_{j-1})\Phi(\left[x\right]_{j})\right).
    \end{align*}
    Hence, if we divide $\hat{l}_i(x)$ into $n$ parts in the following way:
    \begin{align*}
        \hat{l}_i(x) = \frac{1}{n}\sum\limits_{k=1}^n \hat{l}_{ik}(x),
    \end{align*}
    where
    \begin{align*}
        \hat{l}_{1k}(x) &= \begin{cases}
             -n\Psi(1)\Phi(\left[x\right]_1) + \sum\limits_{j \geq 2, \ j \equiv 1 \text{ mod 2n}}n\left(\Psi(-\left[x\right]_{j-1})\Phi(-\left[x\right]_{j})  - \Psi(\left[x\right]_{j-1})\Phi(\left[x\right]_{j})\right) \qquad k=1;\\
             \sum\limits_{j \equiv 2k-1 \text{ mod 2n}}n\left(\Psi(-\left[x\right]_{j-1})\Phi(-\left[x\right]_{j})  - \Psi(\left[x\right]_{j-1})\Phi(\left[x\right]_{j})\right) \qquad k > 1;
        \end{cases}\\
        \hat{l}_{2k}(x) &= \sum\limits_{j \equiv 2k \text{ mod 2n}}n\left(\Psi(-\left[x\right]_{j-1})\Phi(-\left[x\right]_{j})  - \Psi(\left[x\right]_{j-1})\Phi(\left[x\right]_{j})\right),
    \end{align*}
    then 
    \begin{align*}
        \frac{1}{n}\sum\limits_{k=1}^n \norm{\nabla \hat{l}_{ik}(y) - \nabla \hat{l}_{ik}(x)}^2 \leq nL_0^2\norm{y - x}^2 
    \end{align*}
    for $i = 1, 2$ and for all $x, y \in \mathbb{R}^d$.
\end{lemma}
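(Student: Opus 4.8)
The plan is to reduce the stated average–smoothness bound to the (elementary) smoothness of a single \emph{link} of the Arjevani chain, exploiting the fact that after splitting the chain by residues modulo $2n$ the surviving links act on pairwise disjoint coordinate blocks, so that no interaction terms arise. Write $l=\sum_{j\ge 1}h_j$, where $h_1([x])=-\Psi(1)\Phi([x]_1)$ and, for $j\ge 2$, $h_j([x])=\Psi(-[x]_{j-1})\Phi(-[x]_j)-\Psi([x]_{j-1})\Phi([x]_j)$; the link $h_j$ depends only on the coordinates in $P_j:=\{j-1,j\}\cap\{1,2,\dots\}$ (so $P_1=\{1\}$). The first step is to record that each single link is $L_0$-smooth, $\norm{\nabla h_j(y)-\nabla h_j(x)}\le L_0\norm{y-x}$. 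This follows from exactly the elementary bounds on $\Psi,\Psi',\Psi''$ and $\Phi,\Phi',\Phi''$ used in the proof of Lemma~2 of \cite{arjevani2023lower} to establish $L_0$-smoothness of $l$: the Hessian of $h_j$ is a $2\times 2$ block whose entries are products of those bounded quantities, and a direct estimate bounds its operator norm by $L_0=152$ (in fact by much less, and for $h_1$ by $1$). Since $h_j$ depends only on the coordinates $P_j$, this moreover yields $\norm{\nabla h_j(y)-\nabla h_j(x)}\le L_0\norm{y_{P_j}-x_{P_j}}$, where $y_{P_j}$ denotes the restriction of $y$ to the coordinates in $P_j$.

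Next I would rewrite the split pieces as scaled sums of links: $\hat l_{ik}(x)=n\sum_{j\in J_{ik}}h_j(x)$, with $J_{1k}=\{j\ge 2: j\equiv 2k-1 \pmod{2n}\}$ for $k\ge 2$, $J_{11}=\{1\}\cup\{j\ge 2: j\equiv 1 \pmod{2n}\}$, and $J_{2k}=\{j\ge 2: j\equiv 2k \pmod{2n}\}$. Consecutive elements of any $J_{ik}$ differ by $2n$, so the blocks $\{P_j:j\in J_{ik}\}$ are pairwise disjoint (and the lone block $P_1=\{1\}$ in $J_{11}$ is disjoint from $\{2n,2n+1\},\dots$ since $2n\ge 2$). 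Consequently the vectors $\nabla h_j(y)-\nabla h_j(x)$, $j\in J_{ik}$, are supported on disjoint coordinate blocks, and using Step~1,
\begin{align*}
\norm{\nabla \hat l_{ik}(y)-\nabla \hat l_{ik}(x)}^2=n^2\sum_{j\in J_{ik}}\norm{\nabla h_j(y)-\nabla h_j(x)}^2\le n^2L_0^2\sum_{j\in J_{ik}}\norm{y_{P_j}-x_{P_j}}^2 .
\end{align*}

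The crucial point for the final summation is that $\bigcup_{k=1}^{n}J_{ik}$ enumerates \emph{all} links of the family $i$ (the odd $j$ together with $h_1$ for $i=1$, the even $j$ for $i=2$), and the blocks $\{P_j\}$ over the whole family are pairwise disjoint — for $i=1$ they are $\{1\},\{2,3\},\{4,5\},\dots$, and for $i=2$ they are $\{1,2\},\{3,4\},\{5,6\},\dots$ — so $\sum_{\text{all }j}\norm{y_{P_j}-x_{P_j}}^2\le\norm{y-x}^2$. Summing the previous display over $k=1,\dots,n$ and dividing by $n$ gives
\begin{align*}
\frac1n\sum_{k=1}^{n}\norm{\nabla \hat l_{ik}(y)-\nabla \hat l_{ik}(x)}^2\le nL_0^2\sum_{\text{all }j}\norm{y_{P_j}-x_{P_j}}^2\le nL_0^2\norm{y-x}^2,
\end{align*}
which is the claim for $i=1$; the case $i=2$ is identical (with blocks $\{1,2\},\{3,4\},\dots$).

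The main obstacle is Step~1 — cleanly justifying that one link of the chain is $L_0$-smooth with the \emph{same} constant as the full chain. This is a short computation with the explicit $\Psi,\Phi$, but one has to be slightly careful, since the Hessian of a single link is not literally a principal submatrix of $\nabla^2 l$ (neighbouring links share one coordinate), so the estimate must come directly from the bounds on $\Psi,\Psi',\Psi''$ and $\Phi,\Phi',\Phi''$ rather than by restriction; the remainder of the argument is bookkeeping about residue classes and disjoint supports.
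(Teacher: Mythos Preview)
Your proposal is correct and follows essentially the same idea as the paper: the key observation in both is that the links appearing in the pieces $\hat l_{ik}$ act on pairwise disjoint coordinate blocks, so squared norms add up without cross terms, and the bound then reduces to $L_0$-smoothness at the level of a single link. The paper packages this slightly differently --- it notes that the gradients $\nabla\hat l_{ik}$ for different $k$ have disjoint supports, hence $\tfrac1n\sum_k\|\nabla\hat l_{ik}(y)-\nabla\hat l_{ik}(x)\|^2=n\|\nabla\hat l_i(y)-\nabla\hat l_i(x)\|^2$, and then invokes $L_0$-smoothness of $\hat l_i$ in one stroke --- whereas you decompose one level further, down to individual links, and then reassemble; but the content is identical, and both ultimately rest on the single-link smoothness you flag as Step~1 (which the paper leaves implicit).
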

\begin{proof}
    Let us consider the structure of $\nabla \hat{l}_{ik}(x)$. This part of $\hat{l}_i(x)$ depends only on some coordinates of $x$. Hence, given the definition of each slice, we can identify which coordinates of $\hat{l}_{ik}(x)$ can be non-zero. For example, $\nabla \hat{l}_{11}(x)$ can be non-zero only in components $1, 2n, 2n+1, 4n, 4n+1, \ldots$ because this function depends only on these coordinates. \\
    \noindent Moreover, since $n \geq 2$ (when $n = 1$, the fact above is obvious), if we consider $\hat{l}_{ik}(x)$ and $\hat{l}_{ij}(x)$, then there is no intersection of sets of potentially non-zero coordinates of gradients of these functions due to the construction. Therefore, we can apply \cref{lem: smooth-prop} to obtain that $\hat{l}_i(x)$ is $L_0$-smooth. Using that full gradient is 
    \begin{align*}
        \nabla \hat{l}_i(x) = \frac{1}{n}\sum\limits_{k=1}^n \nabla \hat{l}_{ik}(x),
    \end{align*}
    one can obtain
    \begin{align*}
        \frac{1}{n}\sum\limits_{k=1}^n \norm{\nabla \hat{l}_{ik}(y) - \nabla \hat{l}_{ik}(x)}^2 = n \norm{\nabla \hat{l}_{i}(y) - \nabla \hat{l}_{i}(x)}^2 \leq nL_0^2\norm{y - x}^2.
    \end{align*}
\end{proof}
\begin{remark}
    \cref{lem: effect-L} asserts that in essence the function under consideration and its pieces satisfy the assumptions from \cref{thm: lower-bound-sq-norm}. The main effect consists of the scaling factor $\frac{1}{\sqrt{n}}$.
\end{remark}
\textbf{Proof of \cref{thm: lower-bound-sq-norm}}
\noindent \begin{proof} We need to introduce functions $F_i$, structure of a time-varying graphs and mixing matrices respectively to construct the lower bound. Then, we can consider next functions
\begin{align}
    l_1(x) &= \frac{m}{\left\lceil \frac{m}{3}\right\rceil}\left(-\Psi(1)\Phi(\left[x\right]_1) + \sum\limits_{j \text{ odd}}\left(\Psi(-\left[x\right]_{j-1})\Phi(-\left[x\right]_{j})  - \Psi(\left[x\right]_{j-1})\Phi(\left[x\right]_{j})\right)\right), \nonumber \\
    l_2(x) &= \frac{m}{\left\lceil \frac{m}{3}\right\rceil}\left(\sum\limits_{j \text{ even}}\left(\Psi(-\left[x\right]_{j-1})\Phi(-\left[x\right]_{j})  - \Psi(\left[x\right]_{j-1})\Phi(\left[x\right]_{j})\right)\right). \nonumber
\end{align}
As a sequence of graphs, we take star graphs, for each of which the center changes with time according some rules, which we explain later. We derive the mixing matrix from the Laplacian matrix of the graph at the moment $t$ in the next way:
\begin{align*}
    \mW(t) = \mI - \frac{1}{\lambda_{max}(L(t))}L(t).
\end{align*}
This matrix is obviously a mixing matrix by reason of symmetry and doubly stochasticity. Moreover, $\rho(t) = 1 - \mu_2(\mW(t))$, where $\mu_2(\mW(t))$ is the second largest eigenvalue of $\mW(t)$. Consequently, using the spectrum of $L(t)$, one can obtain that $\rho(t) = \rho = \frac{1}{m}$.\\
Let us specify the functions $F_i$ at each node:
\begin{align*}
    F_i(x) = \begin{cases}
        \frac{LC^2}{3L_0}l_1\left(\frac{x}{C}\right) \qquad &1 \leq i \leq {\left\lceil \frac{m}{3}\right\rceil} \Leftrightarrow i \in S_1, \\
        \frac{LC^2}{3L_0}l_2\left(\frac{x}{C}\right) \qquad &{\left\lceil \frac{m}{3}\right\rceil} + 1 \leq i \leq 2{\left\lceil \frac{m}{3}\right\rceil} \Leftrightarrow i \in S_2,\\
        0 \qquad &\text{otherwise} \Leftrightarrow i \in S_3,
    \end{cases}
\end{align*}
where we clarify $C$ later.\\
Also we need to separate each function into $n$ blocks. It is enough to divide $F_i(x)$ according to \cref{lem: effect-L} with corresponding multiplicative constants.
Therefore, since $l_1(x)$ and $l_2(x)$ are $3L_0$-smooth, $F_i(x)$ is $L$-smooth for every $C > 0$.\\
We also can bound $F(0) - \inf_{x} F(x)$
\begin{align*}
    F(0) - \inf_x F(x) \leq \frac{LC^2\Delta_0 d}{3L_0}
\end{align*}
according to the definition of $F_i$. Hence, we need
\begin{align*}
    \frac{LC^2\Delta_0 d}{3L_0} \leq \Delta.
\end{align*}
Now we are ready to  divide our proof into three parts.\\
\noindent \textbf{Number of communications}\\
We want the transfer of information between sets $S_1$ and $S_2$ to not occur for as long as possible. This requires that the center of the star graph is not a vertex from $S_1$ or $S_2$, or it is not a vertex of $S_3$ that already has information from other sets of vertices. Therefore, let us specify the changes of the graphs with time according to the following principle: first we go through all the vertices of the set $S_3$, and after that we choose the vertex that allows the exchange of information between $S_1$ and $S_2$.
Then, mentioning that $\frac{1}{m}\sum\limits_{i=1}^m F_i(x) = \frac{LC^2}{3L_0}l\left(\frac{x}{C}\right)$ and 
\begin{align*}
    \text{prog}(\nabla F_i(x))\begin{cases}
        = \text{prog}(x) + 1 \qquad (\text{prog}(x) \text{ is even and } i \in S_1) \text{ or } (\text{prog}(x) \text{ is odd and } i \in S_2);\\
        \leq \text{prog}(x) \qquad \text{otherwise},
    \end{cases}
\end{align*}
we claim that for increasing the $\text{prog}(x)$ at $1$ we need at least $m - 2{\left\lceil \frac{m}{3}\right\rceil} + 1$ iterations (without considering local computations). Therefore, after $N$ iterations
\begin{align*}
    \text{prog}(N) = \max_{1 \leq i \leq m, \ 0 \leq t \leq N}\text{prog}(x_i^t) \leq \left\lfloor \frac{N}{m - 2{\left\lceil \frac{m}{3}\right\rceil} + 1} \right\rfloor + 1. 
\end{align*}
Also it is easy to make sure that if $m \geq 3$, then $m - 2{\left\lceil \frac{m}{3}\right\rceil} + 1 \geq \frac{m}{4}$. Then
\begin{align*}
    \text{prog}(N) \leq \left\lfloor \frac{4N}{m} \right\rfloor + 1.
\end{align*}
\noindent \textbf{Number of local computations}\\
Here we use the same idea as in first part. Let us consider the next oracle computation: we take one of pieces on each node uniformly, i.e. $\mathbb{P}\{\text{block with index $k$ is chosen}\} = \frac{1}{n}$ for every $k = 1, \ldots, n$. Hence, at the current moment, we need a \textit{specific} piece of function, because according to structure of $l(x)$, each gradient estimation can "defreeze" at most one component and only a computation on a certain block makes it possible. Let us define the number of required gradient calculations as $n_{avg}$. Therefore, 
\begin{align*}
    \mathbb{E}\{n_{avg}\} = \sum\limits_{i=1}^{\infty} \frac{i}{n}\left(\frac{n-1}{n}\right)^{i-1} = n,
\end{align*}
where $\frac{1}{n}\left(\frac{n-1}{n}\right)^{i-1}$ is a is the probability that at $i$-th moment we take the correct piece. Thus, after $K$ local computations on each node we can change at most $\left\lfloor \frac{K}{n}\right\rfloor +1$ coordinates.\\
\textbf{Final result}\\
Hence, if considered algorithm makes $N$ communications and $K$ local computations on each node, then
\begin{align*}
    \text{prog}(N, K) = \max_{1 \leq i \leq m, \ 0 \leq t \leq N}\text{prog}(x_i^t) \leq \min\left(\left\lfloor \frac{4N}{m}\right\rfloor + 1, \left\lfloor \frac{K}{n}\right\rfloor +1  \right)
\end{align*}
Consequently, for every $N \geq \frac{m}{4}$ and $K \geq n$ consider
\begin{align*}
    d = 2 + \min\left(\left\lfloor\frac{4N}{m}\right\rfloor, \left\lfloor \frac{K}{n}\right\rfloor\right).
\end{align*}
It is easy to verify thar
\begin{align*}
    d < \min\left(\frac{16N}{m}, \frac{4K}{n}\right).
\end{align*}
Moreover, we choose $C$ as
\begin{align*}
    C = \left(\frac{3L_0\Delta }{L\Delta_0\min\left(\frac{16N}{m}, \frac{4K}{n}\right)}\right)^{\frac{1}{2}}.
\end{align*}
Hence, clarifying that $\text{prog}(N, K) < d$, we have
\begin{align*}
    \E \norm{\nabla F(\hat{x}_N)}^2 &\geq \min_{\left[x\right]_d = 0} \norm{\nabla F(\hat{x}_N)}^2 = \frac{L^2C^2}{9L_0^2} \min_{\left[x\right]_d = 0} \norm{\nabla l(\hat{x}_N)}^2 \geq \frac{L^2C^2}{9L_0^2} = \max\left(\frac{L\Delta m}{48NL_0\Delta_0}, \frac{L\Delta n}{12KL_0\Delta_0}\right) \\
    &\geq \frac{L\Delta m}{96NL_0\Delta_0} + \frac{L\Delta n}{24KL_0\Delta_0}= \Omega\left(\frac{L\Delta m}{N} + \frac{L\Delta n}{K}\right), 
\end{align*}
where the second inequality holds from fourth property of $l(x)$.\\
Consequently, applying \cref{lem: effect-L} to $\{F_i\}_{i=1}^m$ and noting that $\chi = \Theta\left(m\right)$, we finish the proof.
\end{proof}
}

\end{document}